\theoremstyle{plain}
\newtheorem{theorem}{Theorem}
\newtheorem{lemma}[theorem]{Lemma}
\newtheorem{corollary}[theorem]{Corollary}
\newtheorem{proposition}[theorem]{Proposition}
\theoremstyle{definition}
\newtheorem{example}[theorem]{Example}
\numberwithin{equation}{section}
\newcommand{\R}{\mathbb R}
\newcommand{\N}{\mathbb N}
\newcommand{\rep}{\mathrm{rep}}
\newcommand{\val}{\mathrm{val}}
\newcommand{\lex}{\mathrm{lex}}
\newcommand{\Fac}{\mathrm{Fac}}
\newcommand{\Pref}{\mathrm{Pref}}
\newcommand{\A}{\mathcal{A}}
\newcommand{\B}{\boldsymbol{\beta}}
\newcommand{\Aalpha}{\boldsymbol{\alpha}}
\newcommand{\floor}[1]{\left\lfloor#1\right\rfloor}
\newcommand{\DB}{d_{\boldsymbol{\beta}}}
\newcommand{\DA}{d_{\boldsymbol{\alpha}}}
\newcommand{\DBi}[1]{d_{{\B}^{(#1)}}}
\newcommand{\qDB}{d_{\boldsymbol{\beta}}^{*}}
\newcommand{\qDBi}[1]{d_{\boldsymbol{\beta}^{(#1)}}^{*}}
\newcommand{\Int}{[\![0,p-1]\!]}
\DeclareMathOperator{\lcm}{lcm}
\DeclareMathOperator{\Card}{Card}
\title{Expansions in Cantor real bases}
\author{Émilie Charlier, Célia Cisternino}
\begin{document}

\begin{abstract}
We introduce and study series expansions of real numbers with an arbitrary Cantor real base $\B=(\beta_n)_{n\in\N}$, which we call $\B$-representations. In doing so, we generalize both representations of real numbers in real bases and through Cantor series. We show fundamental properties of $\B$-representations, each of which extends existing results on representations in a real base. In particular, we prove a generalization of Parry's theorem characterizing sequences of nonnegative integers that are the greedy $\B$-representations of some real number in the interval $[0,1)$. We pay special attention to periodic Cantor real bases, which we call alternate bases. In this case, we show that the $\B$-shift is sofic if and only if all quasi-greedy $\B^{(i)}$-expansions of $1$ are ultimately periodic, where $\B^{(i)}$ is the $i$-th shift of the Cantor real base $\B$.
\end{abstract}

\maketitle

\section{Introduction}

Cantor expansions of real numbers were originally introduced by Cantor in 1869~\cite{Cantor:1869}. A real number $x\in[0,1)$ is represented via a base sequence $(b_n)_{n\in\N}$ of integers greater than or equal to $2$ as follows:
\begin{equation}
\label{eq:CantorSeries}
x=\sum_{n=0}^{+\infty}\frac{a_n}{\prod_{i=0}^n b_i}
\end{equation}
where for each $n\in\N$, the digit $a_n$ belongs to the integer interval $[\![0,b_n-1]\!]$. If infinitely many digits $a_n$ are nonzero, then the series~\eqref{eq:CantorSeries} is called the Cantor series of $x$. Many studies are devoted to Cantor series, a large amount of which are concerned with the digit frequencies; see~\cite{Erdos&Renyi:1959,Galambos:1976,Kirschenhofer&Tichy:1984,Renyi:1956} to cite just a few. 

Representations of real numbers using a real base were first defined by Rényi in 1957~\cite{Renyi:1957}. In this context, a real number $x\in[0,1)$ is represented via a real base $\beta$ greater than~$1$ as follows:
\begin{equation}
\label{eq:RealBaseRep}
	x=\sum_{n=0}^{+\infty}\frac{a_{n}}{\beta^{n+1}}
\end{equation}
where the digits $a_n$ can be chosen by using several appropriate algorithms. The most commonly used algorithm is the greedy algorithm according to which for each $n\in\N$, $a_n=\lfloor \beta \,{T_\beta}^n(x)\rfloor$ where $T_\beta\colon [0,1)\mapsto [0,1),\ x\mapsto \beta x-\floor{\beta x}$. Expansions in a real base are extensively studied and we can only cite a few of the many possible references~\cite{Bertrand-Mathis:1986,Lothaire:2002,Parry:1960,Schmidt:1980}. 

This paper investigates series expansions of real numbers that are based on a sequence $\B=(\beta_n)_{n\in\N}$ of real numbers greater than~$1$. We call such a base sequence $\B$ a Cantor real base, and we talk about $\B$-representations. In doing so, we generalize both representations of real numbers through Cantor series and real base representations of real numbers. 

This paper has the following organization. In Section~\ref{sec:CantorRealBases}, we introduce the basic definitions and we give a characterization of those infinite words $a$ over the alphabet $\R_{\ge 0}$ for which there exists a Cantor real base $\B$ such that $\val_{\B}(a)=1$. In Section~\ref{sec:GreedyAlgorithm}, we define the greedy $\B$-representations of real numbers, which we call the $\B$-expansions. Then we prove several fundamental properties of $\B$-representations, each of which extends existing results on real base representations. 
In Section~\ref{sec:QuasiGreedyExpansions}, we introduce the quasi-greedy $\B$-expansion $\qDB(1)$ of $1$ and show that $\qDB(1)$ is the lexicographically greatest $\B$-representation not ending in $0^\omega$ of all real numbers in $[0,1]$.
In Section~\ref{sec:AdmissibleSequences}, we prove a generalization of Parry's theorem~\cite{Parry:1960} characterizing those infinite words over $\N$ that are the greedy $\B$-representations of some real number in the interval $[0,1)$.
In Section~\ref{sec:BShift}, we introduce the notion of $\B$-shift. We are able to give a description of the $\B$-shift in full generality.
In Section~\ref{sec:AlternateBases}, which is the last and biggest section, we focus on the periodic Cantor real bases, which we call alternate bases. We first give a characterization of those infinite words $a$ over the alphabet $\R_{\ge 0}$ for which there exists an alternate base $\B$ such that $\val_{\B}(a)=1$. Then we obtain a characterization of the $\B$-expansion of 1 among all $\B$-representations of $1$, which generalizes a result of Parry~\cite{Parry:1960}. Finally, generalizing Bertrand-Mathis' theorem~\cite{Bertrand-Mathis:1986}, we show that for any alternate base $\B$, the $\B$-shift is sofic if and only if all quasi-greedy $\B^{(i)}$-expansions of $1$ are ultimately periodic, where $\B^{(i)}$ is the $i$-th shift of the Cantor real base $\B$.

\section{Cantor real bases}
\label{sec:CantorRealBases}

Let $\B=(\beta_n)_{n\in\N}$ be a sequence of real numbers greater than $1$ such that $\prod_{n\in\N}\beta_n=+\infty$. We call such a sequence $\B$ a \emph{Cantor real base}, or simply a \emph{Cantor base}. We define the \emph{$\B$-value (partial) map} $\val_{\B}\colon (\R_{\ge 0})^\N\to \R_{\ge 0}$ by
\begin{equation}
\label{eq:representationCantor}
\val_{\B}(a)=\sum_{n\in\N}\frac{a_n}{\prod_{i=0}^n\beta_i}
\end{equation}
for any infinite word $a=a_0a_1a_2\cdots$ over $\R_{\ge 0}$, provided that the series converges. A \emph{$\B$-representation} of a nonnegative real number $x$ is an infinite word $a\in\N^\N$ such that $\val_{\B}(a)=x$. In particular, if $\B=(\beta,\beta,\ldots)$, then for all $x\in [0,1]$, a $\B$-representation of $x$ is a $\beta$-representation of $x$ as defined by Rényi~\cite{Renyi:1957}. 
In this case, we do not distinguish the notation $\B$ and $\beta$: we write $\val_{\beta}$ and we talk about $\beta$-representations, as usual. Also, any sequence $\B=(\beta_n)_{n\in\N}$ of real numbers greater than $1$ that takes only finitely many values is a Cantor base since in this case, the condition $\prod_{n\in\N}\beta_n=+\infty$ is trivially satisfied.

We will need to represent real numbers not only in a fixed Cantor base $\B$ but also in all Cantor bases obtained by shifting $\B$. We define
\[
	\B^{(n)}=(\beta_n,\beta_{n+1},\ldots)\quad \text{for all }n\in\N.
\]
In particular $\B^{(0)}=\B$. We will also need to consider shifted infinite words. Let us denote by $\sigma_A$ the \emph{shift operator}.
\[
	\sigma_A\colon A^\N\to A^\N,\ a_0a_1a_2\cdots\mapsto a_1a_2a_3\cdots
\] 
over the alphabet $A$. Whenever there is no ambiguity on the alphabet, we simply denote the shift operator by $\sigma$. Throughout this text, if $a$ is an infinite word then for all $n\in\N$, $a_n$ designates its letter indexed by $n$, so that $a=a_0a_1a_2\cdots$.

The $\B$-representations of $1$ will be of interest in what follows, in particular the greedy and the quasi-greedy expansions (see Sections~\ref{sec:GreedyAlgorithm} and~\ref{sec:QuasiGreedyExpansions}). We start our study by providing a characterization of those  infinite words $a$ over the alphabet $\R_{\ge 0}$ for which there exists a Cantor real base $\B$ such that $\val_{\B}(a)=1$.

When $\B=(\beta,\beta,\ldots)$, for any infinite word $a$ over $\N$ satisfying some suitable conditions, the equation $\val_{\beta}(a)=1$ admits a unique solution $\beta>1$ (see~\cite[Corollary 7.2.10]{Lothaire:2002}). This classical result remains true for nonnegative real digits and weaker conditions on the infinite word $a$.

\begin{lemma}
\label{lem:ExistRepresentation1}
Let $a$ be an infinite word over $\R_{\ge 0}$ such that $a_n\in O(n^d)$ for some $d\in\N$. There exists a real base $\beta$ such that $\val_{\beta}(a)=1$ if and only if $\sum_{n\in\N}a_n> 1$, in which case $\beta$ is unique and $\beta\ge a_0$, and if moreover for all $n\in\N$, $a_n \le a_0$, then $\beta\le a_0+1$.
\end{lemma}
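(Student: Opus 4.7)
The plan is to study the real function $f\colon (1,+\infty)\to \R_{\ge 0}$ defined by $f(\beta)=\val_{\beta}(a)=\sum_{n=0}^{+\infty}a_n/\beta^{n+1}$, and to show it is a continuous strictly decreasing map onto $(0,S)$, where $S=\sum_{n\in\N}a_n\in [0,+\infty]$. Once this is established, the equivalence will follow from the intermediate value theorem, and the bounds on $\beta$ from direct estimates on the series.

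First, I would use the growth hypothesis $a_n\in O(n^d)$ to guarantee that the series defining $f(\beta)$ converges for every $\beta>1$, and more precisely that it converges normally on every half-line $[\beta_0,+\infty)$ with $\beta_0>1$: indeed $a_n/\beta^{n+1}\le Cn^d/\beta_0^{n+1}$, which is summable. This yields continuity of $f$ on $(1,+\infty)$ and the limit $\lim_{\beta\to +\infty}f(\beta)=0$ (by dominated convergence with the above estimate on $[2,+\infty)$, say). For the left endpoint, I would observe that as $\beta\searrow 1$ each term $a_n/\beta^{n+1}$ increases monotonically to $a_n$, so the monotone convergence theorem gives $\lim_{\beta\to 1^+}f(\beta)=S$, with the value $+\infty$ allowed.

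If $a$ is the zero word then $f\equiv 0$ and $S=0$, so neither side of the equivalence holds and there is nothing to prove. Otherwise, fix some $m$ with $a_m>0$; then for $1<\beta<\beta'$ one has $f(\beta)-f(\beta')\ge a_m(\beta^{-(m+1)}-\beta'^{-(m+1)})>0$, so $f$ is strictly decreasing on $(1,+\infty)$. Combined with the limits computed above, the intermediate value theorem shows that $f(\beta)=1$ has a (necessarily unique) solution $\beta>1$ if and only if $S>1$.

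Finally, for the bounds on $\beta$: from $\val_\beta(a)=1$ we immediately get $a_0/\beta\le 1$, i.e., $\beta\ge a_0$. If moreover $a_n\le a_0$ for all $n\in\N$, then
\[
1=\val_\beta(a)\le a_0\sum_{n=0}^{+\infty}\frac{1}{\beta^{n+1}}=\frac{a_0}{\beta-1},
\]
which gives $\beta\le a_0+1$. The only real subtlety is the analytic justification of the behaviour of $f$ at the endpoints, in particular when $S=+\infty$; but the monotone character of the convergence as $\beta\searrow 1$ makes this entirely routine.
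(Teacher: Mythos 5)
Your proof is correct and follows essentially the same route as the paper: both reduce the statement to continuity, strict monotonicity and the intermediate value theorem for the map $\beta\mapsto\val_{\beta}(a)$ (the paper works with the substitution $x=1/\beta$ on $[0,1)$), and the bounds on $\beta$ are obtained by the same direct comparisons with the geometric series. The only cosmetic difference is at the endpoint: you compute $\lim_{\beta\to 1^+}\val_{\beta}(a)=\sum_{n\in\N}a_n$ via monotone convergence, whereas the paper sidesteps this limit by truncating to a finite sum $\sum_{n=0}^N a_n>1$ and locating an interior point where the value already exceeds $1$; both are valid.
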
 

\begin{proof}
If $\sum_{n\in\N}a_n\le 1$ then for all real bases $\beta$, $\val_{\beta}(a)< 1$. Indeed, this is obvious if $a= 0^\omega$, and else $\val_{\beta}(a)< \sum_{n\in\N}a_n\le 1$.

Now, suppose that $\sum_{n\in\N}a_n> 1$. Let $N\in \N$ be such that $\sum_{n=0}^{N}a_n> 1$. The function $f\colon[0,1)\to \R,\ x\mapsto \sum_{n\in\N}a_nx^{n+1}$ is well-defined, continuous, increasing and such that $f(0)=0$ and that for all $x\in[0,1)$, $f(x)\ge \sum_{n=0}^{N}a_nx^{n+1}$. The function $g\colon\R\to \R,\ x\mapsto \sum_{n=0}^{N}a_nx^{n+1}$ is continuous, increasing and such that $g(0)=0$ and $g(1)>1$. Therefore, there exists a unique $x_0\in(0,1)$ such that $g(x_0)=1$, and hence such that $f(x_0)\ge 1$. Now, there exists a unique $\gamma\in(0,x_0]$ such that $f(\gamma)=1$. By setting $\beta=\frac{1}{\gamma}$, we get that $\beta\ge \frac{1}{x_0}>1$ and $\val_{\beta}(a)=f\left(\frac{1}{\beta}\right)=1$. Moreover, $\beta\ge a_0$ for otherwise $f\left(\frac{1}{\beta}\right)>f\left(\frac{1}{a_0}\right)\ge 1$. 

If moreover for all $n\in\N$, $a_n \le a_0$, then $\beta\le a_0+1$ for otherwise we would have
\[	
	\val_{\beta}(a)
	=\sum_{n\in\N} \frac{a_n}{\beta^{n+1}}
	 < a_0 \sum_{n\in\N}\frac{1}{(a_0+1)^{n+1}}
	 =1.
\]
\end{proof}

No upper bound on the growth order of the digits $a_n$ is needed in order to find a Cantor base $\B$ such that $\val_{\B}(a)=1$.

\begin{lemma}
\label{lem:ExistRepresentation2}
Let $a$ be an infinite word over $\R_{\ge 0}$ such that $\sum_{n\in\N}a_n=+\infty$. Then there exists a Cantor base $\B$ such that $\val_{\B}(a)=1$.
\end{lemma}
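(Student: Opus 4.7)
The plan is to construct $\B$ block-by-block. I will partition $\N$ into consecutive finite blocks $B_k=[m_k,m_{k+1})$ (with $m_0=0$) and set $\beta_n=\gamma_k$ for every $n\in B_k$, choosing $\gamma_k>1$ so that the contribution of the $k$-th block to $\val_{\B}(a)$ equals $2^{-(k+1)}$. Writing $\ell_k=m_{k+1}-m_k$ and $Q_k=\prod_{j<k}\gamma_j^{\ell_j}$ (so $Q_0=1$ and $Q_k=\prod_{i=0}^{m_k-1}\beta_i$), the block contribution is $\sigma_k=\frac{1}{Q_k}f_k(\gamma_k)$ where
\[
    f_k(\gamma)=\sum_{j=0}^{\ell_k-1}\frac{a_{m_k+j}}{\gamma^{j+1}}.
\]

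Inductive step: assume $m_0,\ldots,m_k$ and $\gamma_0,\ldots,\gamma_{k-1}$ are already fixed, so $Q_k$ is determined. Since $\sum_{n\in\N}a_n=+\infty$, I can pick $m_{k+1}>m_k$ large enough that $s_k:=\sum_{n\in B_k}a_n\ge Q_k/2^k$. The map $\gamma\mapsto f_k(\gamma)$ is continuous and strictly decreasing on $[1,\infty)$ (strict because $s_k>0$ forces some $a_{m_k+j}$ to be nonzero), with $f_k(1)=s_k>Q_k/2^{k+1}$ and $f_k(\gamma)\to 0$ as $\gamma\to\infty$; by the intermediate value theorem there is a unique $\gamma_k>1$ with $f_k(\gamma_k)=Q_k/2^{k+1}$, i.e.\ $\sigma_k=2^{-(k+1)}$. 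For $\gamma\ge 1$ the crude bound $a_{m_k+j}/\gamma^{j+1}\ge a_{m_k+j}/\gamma^{\ell_k}$ gives $f_k(\gamma)\ge s_k/\gamma^{\ell_k}$; applied at $\gamma=\gamma_k$ this yields
\[
    \gamma_k^{\ell_k}\ge\frac{s_k}{Q_k/2^{k+1}}\ge 2.
\]

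It then remains to verify the three requirements. Each $\beta_n=\gamma_k>1$ by construction, and $Q_{k+1}=Q_k\gamma_k^{\ell_k}\ge 2Q_k$ iterated yields $Q_k\ge 2^k$; since $\prod_{i=0}^{n}\beta_i\ge Q_k$ for $n\ge m_k-1$, we get $\prod_{n\in\N}\beta_n=+\infty$, so $\B$ is indeed a Cantor base. Convergence of the defining series and its value then follow by grouping the nonnegative terms block by block: $\val_{\B}(a)=\sum_{k\in\N}\sigma_k=\sum_{k\in\N}2^{-(k+1)}=1$.

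The main obstacle I anticipate is reconciling the strict inequality $\beta_n>1$ and the divergence of $\prod_n\beta_n$ with the \emph{exact} equality $\val_{\B}(a)=1$, without any growth hypothesis on the digits. The block construction resolves this tension in one stroke: inside a block engineered to carry $a$-mass at least $2\cdot 2^{-(k+1)}Q_k$, the intermediate value theorem pins $\gamma_k>1$ down to hit the target $\sigma_k=2^{-(k+1)}$ on the nose, and the same mass inequality automatically forces $\gamma_k^{\ell_k}\ge 2$, delivering product divergence for free.
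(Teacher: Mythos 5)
Your proof is correct, and it takes a genuinely different route from the paper's. The paper's construction is essentially the telescoping choice $\beta_n=a_n+1$, which gives $\sum_{n=0}^{N}\frac{a_n}{\prod_{i=0}^{n}(a_i+1)}=1-\frac{1}{\prod_{i=0}^{N}(a_i+1)}$ and deduces divergence of the product from $\prod_{n}(a_n+1)=+\infty$; the price is a somewhat delicate case analysis to repair the positions where $a_n=0$ (there the naive choice would give $\beta_n=1$), handled by inserting auxiliary factors $\alpha_k$ across each zero block and compensating at the block's end. Your block construction sidesteps that case analysis entirely: zeros are simply absorbed into whichever block they fall in, the intermediate value theorem pins down a constant base $\gamma_k>1$ per block so that the block contributes exactly $2^{-(k+1)}$, and the mass condition $s_k\ge Q_k/2^k$ simultaneously guarantees $\gamma_k>1$ and $\gamma_k^{\ell_k}\ge 2$, hence divergence of the product. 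What you lose is explicitness — the paper's base is given in closed form from the digits, whereas yours is defined implicitly via IVT — and the paper's argument also exhibits the stronger structural fact that the partial values telescope to $1-1/\prod_{i\le n}\beta_i$; what you gain is a uniform, shorter argument with no bookkeeping of zero blocks. All the steps check out: the tail of the divergent series lets you close each block at a finite $m_{k+1}$, positivity of $s_k$ makes $f_k$ strictly decreasing, and nonnegativity of the terms justifies the block-by-block summation of $\val_{\B}(a)$.
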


\begin{proof}
First of all, observe that the hypothesis implies that $a$ does not end in $0^\omega$ and that $\prod_{n\in\N}(a_n+1)=+\infty$. 

We define two sequences of nonnegative integers $(n_k)_{1\le k\le K}$ and $(\ell_k)_{1\le k\le K}$ where $K\in\N\cup\{+\infty\}$. 
The length $K$ of these two sequences is the number of zero blocks in $a$, i.e.\ the factors of the form $0^\ell$ which are neither preceded nor followed by $0$ in $a$. Two cases stand out: either $K\in\N$ or $K=+\infty$. We describe the two cases at once. In order to do so, it should be understood that the parts of the definition where $k> K$ should just be ignored when $K\in\N$. Let $n_1$ denote the least $n\in\N$ such that $a_n=0$ and let $\ell_1$ denote the least $\ell\in\N$ such that $a_{n_1+\ell}>0$. Then for $k\ge 2$, let $n_k$ denote the least integer $n> n_{k-1}+\ell_{k-1}$ such that $a_n=0$ and let $\ell_k$ denote the least $\ell\in\N$ such that $a_{n_k+\ell}>0$. Thus, $(n_k)_{1\le k\le K}$ is the sequence of positions of appearance of the successive zero blocks in $a$ and $(\ell_k)_{1\le k\le K}$ is the sequence of lengths of these blocks. 

Next, for all $k\in[\![1,K]\!]$, we pick any $\alpha_k$ in the interval $(1,\sqrt[\ell_k]{a_{n_k+\ell_k}+1})$. For all $n\in\N$, we define
\[
	\beta_n=
	\begin{cases}
	a_n+1	& \text{if }n\in [\![0,n_1-1]\!]\text{ or }n\in
					\bigcup_{k=1}^K[\![n_k+\ell_k+1,n_{k+1}-1]\!]\\
	\alpha_k & \text{if }n\in[\![n_k,n_k+\ell_k-1]\!] \text{ for some }k\in[\![1,K]\!]\\			
	\frac{a_n+1}{\alpha_k^{\ell_k}}	
			&\text{if }n=n_k+\ell_k \text{ for some }k\in[\![1,K]\!]	
	\end{cases}
\]
where we set $n_{K+1}=+\infty$ if $K\in\N$. In particular if $K=0$, i.e.\ if for all $n\in\N$, $a_n>0$, then for all $n\in\N$, $\beta_n=a_n+1$.

Let us show that in any case, the obtained sequence $\B=(\beta_n)_{n\in\N}$ is such that $\prod_{n\in \N}\beta_n=+\infty$ and $\val_{\B}(a)=1$. By construction, 
\[
	\prod_{n\in\N}\beta_n
	=\prod_{n=0}^{n_1-1}(a_n+1)\cdot 
	\prod_{k=1}^K
	\left(\alpha_k^{\ell_k}\cdot
	\frac{a_{n_k+\ell_k}+1}{\alpha_k^{\ell_k}}
	\cdot
	\prod_{n=n_k+\ell_k+1}^{n_{k+1}-1}(a_n+1) 
	\right)
	=\prod_{n\in\N}(a_n+1).
\]
By induction we can show that
\[
	\sum_{n=0}^{n_k+\ell_k}\frac{a_n}{\prod_{i=0}^n\beta_i}
	= 1-\frac{1}{\prod_{i=0}^{n_k+\ell_k}\beta_i}
	\quad\text{for all }k\in[\![1,K]\!].
\]
If $K=+\infty$ then we obtain that $\val_{\B}(a)=1$ by letting $k$ tend to infinity. Otherwise, $K\in\N$. Set $n_0=-1$ and $\ell_0=0$. By induction again, we can show that
\[
	\sum_{n=n_K+\ell_K+1}^m
	\frac{a_n}{\prod_{i=n_K+\ell_K+1}^n\beta_i}
	=1-\frac{1}{\prod_{i=n_K+\ell_K+1}^m\beta_i}
	\quad\text{for all }m\in\N.
\]
By letting $m$ tend to infinity, we get
\[
	\val_{\B^{(n_K+\ell_K+1)}}(\sigma^{n_K+\ell_K+1}(a))=1.
\]
Finally, we obtain
\begin{align*}
	\val_{\B}(a)
	&=\sum_{n=0}^{n_K+\ell_K}
	\frac{a_n}{\prod_{i=0}^n\beta_i}+
	\sum_{n=n_K+\ell_K+1}^{+\infty}
	\frac{a_n}{\prod_{i=0}^n\beta_i}\\
	&=1-\frac{1}{\prod_{i=0}^{n_K+\ell_K}\beta_i}		
	+\frac{\val_{\B^{(n_K+\ell_K+1)}}(\sigma^{n_K+\ell_K+1}(a))}{\prod_{i=0}^{n_K+\ell_K}\beta_i}\\
	&= 1.
\end{align*}
\end{proof}

\begin{proposition}
\label{prop:ExistRepresentation}
Let $a$ be an infinite word over $\R_{\ge 0}$. There exists a Cantor base $\B$ such that $\val_{\B}(a)=1$ if and only if 
$\sum_{n\in\N}a_n> 1$. 
\end{proposition}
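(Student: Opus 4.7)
The plan is to derive the proposition directly from Lemmas~\ref{lem:ExistRepresentation1} and~\ref{lem:ExistRepresentation2}, with an almost trivial argument for the necessity.

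For the forward direction, suppose a Cantor base $\B$ satisfies $\val_\B(a)=1$. Since $\val_\B(0^\omega)=0$, the word $a$ is not identically zero, so there is some index $k$ with $a_k>0$. Because every $\beta_i>1$, we have $\prod_{i=0}^k \beta_i>1$, hence $\frac{a_k}{\prod_{i=0}^k\beta_i}<a_k$ strictly, while for all other $n$ the inequality $\frac{a_n}{\prod_{i=0}^n\beta_i}\le a_n$ holds. Summing these (allowing the value $+\infty$ on the right-hand side) gives $1=\val_\B(a)<\sum_{n\in\N}a_n$.

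For the backward direction, assume $\sum_{n\in\N}a_n>1$ and split into two cases depending on whether this sum is finite or infinite. If $\sum_{n\in\N}a_n=+\infty$, Lemma~\ref{lem:ExistRepresentation2} immediately produces a Cantor base $\B$ with $\val_\B(a)=1$. If $\sum_{n\in\N}a_n<+\infty$, then $a_n\to 0$, so the sequence $(a_n)_{n\in\N}$ is bounded, i.e.\ $a_n\in O(n^0)$. Since the sum exceeds $1$, Lemma~\ref{lem:ExistRepresentation1} (applied with $d=0$) yields a real base $\beta>1$ with $\val_\beta(a)=1$, and then the constant Cantor base $\B=(\beta,\beta,\ldots)$ does the job.

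There is essentially no obstacle here; the only subtlety is making sure to use strict inequality in the necessity (which requires noticing that $a$ has at least one nonzero letter) and to verify that finite summability lets us land in the hypothesis $a_n\in O(n^d)$ of Lemma~\ref{lem:ExistRepresentation1}. Both are immediate, so the proof is a short case distinction.
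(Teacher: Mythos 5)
Your proof is correct and follows essentially the same route as the paper: necessity via the strict inequality $\val_{\B}(a)<\sum_{n\in\N}a_n$ for nonzero $a$ (the argument already used in Lemma~\ref{lem:ExistRepresentation1}), and sufficiency by splitting on whether $\sum_{n\in\N}a_n$ is finite, invoking Lemma~\ref{lem:ExistRepresentation1} or Lemma~\ref{lem:ExistRepresentation2} accordingly. Your explicit check that a finite sum forces $a_n\in O(n^0)$ is a small detail the paper leaves implicit, but otherwise the two proofs coincide.
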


\begin{proof}
Similarly as in the proof of Lemma~\ref{lem:ExistRepresentation1}, the condition $\sum_{n\in\N}a_n> 1$ is necessary. Now, suppose that $\sum_{n\in\N}a_n> 1$. If $\sum_{n\in\N}a_n=+\infty$ then we use Lemma~\ref{lem:ExistRepresentation2}. Otherwise, we have $1<\sum_{n\in\N}a_n<+\infty$ and we apply Lemma~\ref{lem:ExistRepresentation1}.
\end{proof}

\section{The greedy algorithm}
\label{sec:GreedyAlgorithm}

For $x\in[0,1]$, a distinguished $\B$-representation $\varepsilon_0(x)\varepsilon_1(x)\varepsilon_2(x)\cdots$ is given thanks to the \emph{greedy algorithm}:
\begin{itemize}
\item $\varepsilon_{0}(x)=\floor{\beta_0 x}$ and $r_0(x)=\beta_0 x-\varepsilon_0(x)$
\item $\varepsilon_{n}(x)=\floor{\beta_n r_{n-1}(x)}$ and $r_n=\beta_n r_{n-1}(x)-\varepsilon_n(x)$ for $n\in\N_{\ge 1}$.
\end{itemize}
The obtained $\B$-representation of $x$ is denoted by $\DB(x)$ and is called the \emph{$\B$-expansion} of $x$. Note that the $n$-th digit $\varepsilon_{n}(x)$ belongs to $\{0,\ldots,\floor{\beta_n}\}$. We let $A_{\B}$ denote the (possibly infinite) alphabet $\{0,\ldots,\sup_{n\in\N}\floor{\beta_n}\}$. The algorithm is called greedy since at each step it chooses the largest possible digit. Indeed, consider $x\in [0,1]$ and $\ell\in\N$, and suppose that the digits $\varepsilon_0(x),\ldots ,\varepsilon_{\ell-1}(x)$ are already known. Then the digit $\varepsilon_\ell(x)$ is the largest element of $ [\![0,\floor{\beta_\ell}]\!]$ such that $\sum_{n=0}^\ell \frac{\varepsilon_n(x)}{\prod_{i=0}^n \beta_i}\le x$. Thus
\[
	x=\sum_{n=0}^\ell\frac{\varepsilon_n(x)}{\prod_{i=0}^n\beta_i}
	+\frac{r_\ell(x)}{\prod_{i=0}^\ell\beta_i}
\]
where $r_\ell(x)\in[0,1)$. Note that since a Cantor base satisfies $\prod_{n\in\N}\beta_n=+\infty$, the latter equality implies the convergence of the greedy algorithm and that $x=\val_{\B}(\DB(x))$. We let $D_{\B}$ denote the subset of $A_{\B}^\N$ of all $\B$-expansions of real numbers in the interval $[0,1)$:
\[
	D_{\B}=\{\DB(x)\colon x\in[0,1)\}.
\]  
In what follows, the $\B$-expansion of $1$ will play a special role. For the sake of clarity, we denote its digits by $\varepsilon_{n}$ instead of $\varepsilon_{n}(1)$. We sometimes write $\varepsilon_{\B,n}(x)$ and $\varepsilon_{\B,n}$ instead of $\varepsilon_{n}(x)$ and $\varepsilon_{n}$ when the Cantor base $\B$ needs to be emphasized. As previously mentioned, if $\B=(\beta,\beta,\ldots)$, then for all $x\in [0,1]$, the $\B$-expansion of $x$ is equal to the usual $\beta$-expansion of $x$ as defined by Rényi~\cite{Renyi:1957} and we write indistinctly $\B$ or $\beta$.

We can also express the obtained digits $\varepsilon_{n}(x)$ and remainders $r_n(x)$ thanks to the $\beta_n$-transformations. For $\beta>1$, the $\beta$-transformation is the map
\[
	T_{\beta}\colon [0,1)\to [0,1),\ x \mapsto \beta x -\floor{\beta x}.
\]
Then for all $x\in[0,1)$ and $n\in\N$, we have
\[
	\varepsilon_{n}(x)
	=\floor{\beta_n\big(T_{\beta_{n-1}}\circ \cdots \circ T_{\beta_0}(x)\big)}
\quad \text{and}\quad
	r_{n}(x)
	=T_{\beta_n}\circ \cdots \circ T_{\beta_0}(x).
\]

\begin{example}
If there exists $n\in \N$ such that $\beta_n$ is an integer (without any restriction on the other $\beta_m$), then $d_{\B^{(n)}}(1)=\beta_n 0^\omega$. 
\end{example}

\begin{example}
For $n\in \N$, let $\alpha_n=1+\frac{1}{2^{n+1}}$ and $\beta_n=2+\frac{1}{2^{n+1}}$. The sequence $\Aalpha=(\alpha_n)_{n\in\N}$ is not a Cantor base since $\prod_{n\in\N}\alpha_n<+\infty$. If we perform the greedy algorithm on $x=1$ for the sequence $\Aalpha$, we obtain the sequence of digits $10^\omega$, which is clearly not an $\Aalpha$-representation of $1$. However, the sequence $\B=(\beta_n)_{n\in\N}$ is indeed a Cantor base since $\prod_{n\in\N}\beta_n=+\infty$. 
\end{example}

\begin{example}
Let $\alpha=\frac{1+ \sqrt{13}}{2}$ and $\beta=\frac{5+ \sqrt{13}}{6}$.
\begin{enumerate}
\item Consider $\B=(\beta_n)_{n\in\N}$ the Cantor base defined by 
\[
\beta_n=\begin{cases} 
\alpha & \text{if } | \rep_2(n) |_1 \equiv 0 \pmod 2\\
\beta & \text{otherwise}
\end{cases}
\]
for all $n\in \N$, where $\rep_2$ is the function mapping any nonnegative integer to its $2$-expansion. We get $\B=(\alpha,\beta,\beta,\alpha,\beta,\alpha,\alpha,\beta, \ldots)$ where the infinite word $\beta_0\beta_1\beta_2\cdots$ is the Thue-Morse word over the alphabet $\{\alpha,\beta\}$.
We compute $\DB(1)=20010110^{\omega}$, $\DBi{1}(1)=1010110^\omega$ and $\DBi{2}(1)=110^{\omega}$.
\item Consider $\B=(\sqrt{13},\alpha,\beta,\alpha,\beta,\alpha,\beta,\ldots)$. 
It is easily checked that $\DB(1)=3(10)^\omega$ and that for all $m\in\N$, $\DBi{2m+1}(1)=2010^\omega$ and $\DBi{2m+2}(1)=110^\omega$.
\end{enumerate}
\end{example}

We call an \emph{alternate base} a periodic Cantor base, i.e.\ a Cantor base for which there exists $p\in\N_{\ge 1}$ such that for all $n\in\N$, $\beta_n=\beta_{n+p}$. In this case we simply note $\B=(\overline{\beta_0,\ldots,\beta_{p-1}})$ and the integer $p$ is called the \emph{length} of the alternate base $\B$.  In what follows, most examples will be alternate bases and Section~\ref{sec:AlternateBases} will be specifically devoted to their study. 

\begin{example} 
\label{ex:3phiphi}
Let $\varphi=\frac{1+\sqrt{5}}{2}$ be the Golden Ratio and let $\B=(\overline{3,\varphi,\varphi})$. 
For all $m\in\N$,  we have $\DBi{3m}(1)=30^\omega$, $\DBi{3m+1}(1)=110^\omega$ and $\DBi{3m+2}(1)=1(110)^\omega$. 
\end{example}

Let us show that the classical properties of the $\beta$-expansion theory are still valid for Cantor bases. Some are just an adaptation of the related proofs in \cite{Lothaire:2002} but for the sake of completeness the details are written. From now on, unless otherwise stated, we consider a fixed Cantor base $\B=(\beta_n)_{n\in\N}$.

\begin{proposition}
For all $x\in [0,1)$ and all $n\in \N$, we have 
\[
	\sigma^n\circ \DB (x)
	= d_{\B^{(n)}}\circ T_{\beta_{n-1}}\circ \cdots\circ T_{\beta_0} (x).
\]
\end{proposition}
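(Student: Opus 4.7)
The plan is to prove the equality letter by letter: show that for every $k\in\N$, the $k$-th letter of $\sigma^n\circ \DB(x)$ equals the $k$-th letter of $d_{\B^{(n)}}\circ T_{\beta_{n-1}}\circ\cdots\circ T_{\beta_0}(x)$.

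First I would check that the right-hand side is well defined. Since $T_{\beta}$ maps $[0,1)$ into itself for every $\beta>1$, the iterate $y=T_{\beta_{n-1}}\circ\cdots\circ T_{\beta_0}(x)$ lies in $[0,1)$, so $d_{\B^{(n)}}(y)$ is well defined (with the convention that the empty composition, occurring when $n=0$, is the identity, so that the statement reduces trivially to $\DB(x)=\DB(x)$ in that case).

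Next, using the closed formula already derived in the excerpt, namely $\varepsilon_m(x)=\lfloor \beta_m\bigl(T_{\beta_{m-1}}\circ\cdots\circ T_{\beta_0}(x)\bigr)\rfloor$ for every $m\in\N$, I would compute the $k$-th letter of $\sigma^n\circ\DB(x)$ to be $\varepsilon_{n+k}(x)=\lfloor \beta_{n+k}\bigl(T_{\beta_{n+k-1}}\circ\cdots\circ T_{\beta_0}(x)\bigr)\rfloor$. Applying the same closed formula to the Cantor base $\B^{(n)}$ at the point $y$, the $k$-th letter of $d_{\B^{(n)}}(y)$ is $\varepsilon_{\B^{(n)},k}(y)=\lfloor \beta_{n+k}\bigl(T_{\beta_{n+k-1}}\circ\cdots\circ T_{\beta_n}(y)\bigr)\rfloor$, and substituting $y=T_{\beta_{n-1}}\circ\cdots\circ T_{\beta_0}(x)$ gives exactly the same expression.

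Since the two infinite words agree at every index, they are equal, which establishes the proposition. No step is especially delicate here: the only mild care needed is to read the indexing of the $\beta$-transformations correctly and to handle the edge case $n=0$ using the empty-composition convention. Everything else is a direct substitution into the already-proved closed formula for the digits of the greedy expansion.
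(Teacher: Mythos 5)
Your proof is correct and is exactly the ``straightforward verification'' that the paper's one-line proof alludes to: you verify the equality digit by digit using the closed formula $\varepsilon_{m}(x)=\lfloor\beta_m\bigl(T_{\beta_{m-1}}\circ\cdots\circ T_{\beta_0}(x)\bigr)\rfloor$ stated just before the proposition, together with the observation that $T_{\beta_{n-1}}\circ\cdots\circ T_{\beta_0}(x)\in[0,1)$. Nothing further is needed.
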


\begin{proof}
This is a straightforward verification.
\end{proof}

\begin{lemma}
\label{lem:Greedy}
For all infinite words $a$ over $\N$ and all $x\in[0,1]$, $a=\DB(x)$ if and only if $\val_{\B}(a)=x$ and for all $\ell\in\N$,
\begin{equation}
\label{eq:GreedyCondition}
\sum_{n=\ell+1}^{+\infty}\frac{a_n}{\prod_{i=0}^n{\beta_i}} < \frac{1}{\prod_{i=0}^{\ell}{\beta_i}}.
\end{equation}
\end{lemma}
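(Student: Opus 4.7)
The plan is to prove the two implications separately, with the forward direction reducing to the identity relating partial sums of $\DB(x)$ to the remainders $r_\ell(x)$, and the reverse direction going by induction on $\ell$ to identify $a_\ell$ with the $\ell$-th greedy digit.

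For the forward direction, assume $a=\DB(x)$. The text already notes that $\val_\B(\DB(x))=x$. Moreover, from the definition of the greedy algorithm I have the identity
\[
	x=\sum_{n=0}^\ell\frac{\varepsilon_n(x)}{\prod_{i=0}^n\beta_i}+\frac{r_\ell(x)}{\prod_{i=0}^\ell\beta_i},
\]
so subtracting the finite partial sum from $\val_\B(a)=x$ yields
\[
	\sum_{n=\ell+1}^{+\infty}\frac{a_n}{\prod_{i=0}^n\beta_i}
	=\frac{r_\ell(x)}{\prod_{i=0}^\ell\beta_i}<\frac{1}{\prod_{i=0}^\ell\beta_i},
\]
since $r_\ell(x)\in[0,1)$. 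This gives the required inequality~\eqref{eq:GreedyCondition}.

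For the converse, suppose $\val_\B(a)=x$ and~\eqref{eq:GreedyCondition} holds for every $\ell\in\N$. I would prove by induction on $\ell$ that $a_\ell=\varepsilon_\ell(x)$ (equivalently, $a_\ell=\lfloor\beta_\ell\, r_{\ell-1}(x)\rfloor$, with the convention $r_{-1}(x)=x$). For the base case $\ell=0$, splitting $\val_\B(a)=x$ as $\tfrac{a_0}{\beta_0}+\sum_{n\ge 1}\tfrac{a_n}{\prod_{i=0}^n\beta_i}$ and using~\eqref{eq:GreedyCondition} with $\ell=0$ gives $\tfrac{a_0}{\beta_0}\le x<\tfrac{a_0+1}{\beta_0}$, whence $a_0=\lfloor \beta_0 x\rfloor=\varepsilon_0(x)$. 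For the inductive step, once $a_0,\ldots,a_{\ell-1}$ are shown to coincide with $\varepsilon_0(x),\ldots,\varepsilon_{\ell-1}(x)$, the greedy identity above and the hypothesis $\val_\B(a)=x$ together yield
\[
	\frac{r_{\ell-1}(x)}{\prod_{i=0}^{\ell-1}\beta_i}
	=\frac{a_\ell}{\prod_{i=0}^\ell\beta_i}
	+\sum_{n=\ell+1}^{+\infty}\frac{a_n}{\prod_{i=0}^n\beta_i}.
\]
Multiplying by $\prod_{i=0}^\ell\beta_i$ and invoking~\eqref{eq:GreedyCondition} at level $\ell$ gives $a_\ell\le \beta_\ell\, r_{\ell-1}(x)<a_\ell+1$, so $a_\ell=\lfloor \beta_\ell\, r_{\ell-1}(x)\rfloor=\varepsilon_\ell(x)$, completing the induction.

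There is essentially no obstacle in this argument: the whole content is the elementary observation that~\eqref{eq:GreedyCondition} is exactly what forces each digit to lie in the correct unit interval $[a_\ell,a_\ell+1)$ after normalization. The only point that deserves a line of care is the convergence of the tail series and the manipulation of the indices in the product $\prod_{i=0}^n\beta_i$, which is guaranteed by the Cantor-base assumption $\prod_{n\in\N}\beta_n=+\infty$ as recalled in the paragraph preceding the lemma.
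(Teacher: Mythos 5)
Your proposal is correct and follows essentially the same route as the paper: the forward direction uses the identity expressing the tail of the series as $r_\ell(x)/\prod_{i=0}^\ell\beta_i$ with $r_\ell(x)\in[0,1)$, and the converse is the same induction on the digit index, pinning $a_\ell$ down via the inequality $a_\ell\le\beta_\ell r_{\ell-1}(x)<a_\ell+1$ (the paper phrases this as $a_\ell+t=\varepsilon_\ell(x)+r_\ell(x)$ with $t,r_\ell(x)\in[0,1)$, which is the same argument). No gaps.
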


\begin{proof}
From the greedy algorithm, for all $x\in[0,1]$,
$\val_{\B}(\DB(x))=x$ and for all $\ell\in\N$,
\[
	\left(\sum_{n=\ell+1}^{+\infty}
	\frac{\varepsilon_n(x)}{\prod_{i=0}^n{\beta_i}}\right)
	\prod_{i=0}^{\ell}{\beta_i}
	=\left(x-\sum_{n=0}^{\ell}
	\frac{\varepsilon_n(x)}{\prod_{i=0}^n{\beta_i}}\right)
	\prod_{i=0}^{\ell}{\beta_i}
	=r_{\ell}(x)<1.
\]
Conversely, suppose that $a$ is an infinite word over $\N$ such that $\val_{\B}(a)=x$ and such that for all $\ell\in\N$, \eqref{eq:GreedyCondition} holds. Let us show by induction that for all $m\in\N$, $a_m=\varepsilon_m(x)$. From~\eqref{eq:GreedyCondition} for $\ell=0$, we get that $x-\frac{a_0}{\beta_0}<\frac{1}{\beta_0}$. Thus, $\beta_0 x-1<a_0$. Since $\frac{a_0}{\beta_0}\le x$, we get that $a_0\le \beta_0 x$. Therefore, $a_0=\floor{\beta_0 x}=\varepsilon_0(x)$. Now, suppose that $m\in\N_{\ge 1}$ and that for $n\in[\![0,m-1]\!]$, $a_n=\varepsilon_n(x)$. Then
\[
	a_m+	\left(\sum_{n=m+1}^{+\infty}
	\frac{a_n}{\prod_{i=0}^n{\beta_i}}\right)
	\prod_{i=0}^m{\beta_i}
	=\varepsilon_m(x)+r_m(x).	
\]
By using~\eqref{eq:GreedyCondition} for $\ell=m$ and since $r_m(x)<1$, we obtain that $a_m=\varepsilon_m(x)$.
\end{proof}

\begin{proposition}
\label{prop:ShiftedGreedy}
Let $a$ be a $\B$-representation of some real number $x$ in $[0,1]$. 
Then the following four assertions are equivalent.
\begin{enumerate}
\item The infinite word $a$ is the $\B$-expansion of $x$.
\item For all $n\in \N_{\ge 1}$, $\val_{\B^{(n)}}(\sigma^{n}(a))<1$.
\item The infinite word $\sigma(a)$ belongs to $D_{\B^{(1)}}$.
\item For all $n\in \N_{\ge 1}$, $\sigma^{n}(a)$ belongs to $D_{\B^{(n)}}$.
\end{enumerate}
\end{proposition}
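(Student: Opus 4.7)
The plan is to run the cyclic chain $(1) \Rightarrow (4) \Rightarrow (3) \Rightarrow (2) \Rightarrow (1)$. The key observation, which drives both the equivalence $(1) \Leftrightarrow (2)$ and the passage from $(3)$ to $(2)$, is the factorisation
\[
\sum_{n=\ell+1}^{+\infty}\frac{a_n}{\prod_{i=0}^n \beta_i} = \frac{\val_{\B^{(\ell+1)}}(\sigma^{\ell+1}(a))}{\prod_{i=0}^{\ell}\beta_i},
\]
valid for every infinite word $a$ over $\R_{\ge 0}$ and every $\ell\in\N$ (as soon as the relevant series converge). With this identity in hand, the greedy inequality~\eqref{eq:GreedyCondition} at index $\ell$ is equivalent to $\val_{\B^{(\ell+1)}}(\sigma^{\ell+1}(a))<1$, so the collection of conditions appearing in Lemma~\ref{lem:Greedy} is literally condition (2) under the change of variables $n=\ell+1$. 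Since $a$ is assumed to be a $\B$-representation of $x$, Lemma~\ref{lem:Greedy} then delivers $(1) \Leftrightarrow (2)$ directly.

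For $(1) \Rightarrow (4)$, I would invoke the preceding proposition: if $a=\DB(x)$, then for every $n\ge 1$,
\[
\sigma^n(a) = d_{\B^{(n)}}\bigl(T_{\beta_{n-1}}\circ\cdots\circ T_{\beta_0}(x)\bigr) = d_{\B^{(n)}}(r_{n-1}(x)),
\]
and since the remainders satisfy $r_{n-1}(x)\in[0,1)$, we conclude $\sigma^n(a)\in D_{\B^{(n)}}$. The implication $(4)\Rightarrow (3)$ is the special case $n=1$. To close the cycle with $(3)\Rightarrow(2)$, set $y:=\val_{\B^{(1)}}(\sigma(a))$; hypothesis~(3) asserts $\sigma(a)=\DBi{1}(y)$ with $y\in[0,1)$, which immediately gives the $n=1$ instance of (2). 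For $n\ge 2$, I apply the already-established implication $(1)\Rightarrow(2)$ to the Cantor base $\B^{(1)}$ and to the $\B^{(1)}$-expansion $\sigma(a)$ of~$y$: this yields $\val_{\B^{(1+m)}}(\sigma^m(\sigma(a)))<1$ for every $m\ge 1$, i.e., $\val_{\B^{(n)}}(\sigma^n(a))<1$ for every $n\ge 2$, as required.

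The main (really the only) obstacle is the bookkeeping of the index shift between Lemma~\ref{lem:Greedy}, where the greedy condition is indexed by $\ell\in\N$, and statement~(2), where the quantifier runs over $n\in\N_{\ge 1}$. The telescoping identity above resolves this once and for all; after that, each implication in the cycle is a one-line consequence of either Lemma~\ref{lem:Greedy} or the preceding proposition on the compatibility of $\DB$ with the shift operator.
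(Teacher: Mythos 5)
Your proof is correct and follows essentially the same route as the paper: both rest on Lemma~\ref{lem:Greedy} together with the observation that the greedy condition~\eqref{eq:GreedyCondition} at index $\ell$ is exactly $\val_{\B^{(\ell+1)}}(\sigma^{\ell+1}(a))<1$, which identifies assertions (1), (2) and (3). The only organizational difference is that you get $(1)\Rightarrow(4)$ from the compatibility of $\DB$ with the transformations $T_{\beta_n}$ (noting $r_{n-1}(x)\in[0,1)$ even when $x=1$), whereas the paper obtains $(3)\Rightarrow(4)$ by iterating $(1)\Rightarrow(3)$; the two amount to the same bookkeeping.
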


\begin{proof}
Since $\val_{\B}(a)=x\in[0,1]$, it follows from Lemma~\ref{lem:Greedy} that $a=\DB(x)$ if and only if for all $\ell\in\N$, \eqref{eq:GreedyCondition} holds. 
In order to obtain the equivalences between the first three items, it suffices to note that the greedy condition~\eqref{eq:GreedyCondition} can be rewritten as $\val_{\B^{(\ell+1)}}(\sigma^{\ell+1}(a))<1$.
Clearly $(4)$ implies $(3)$. Finally we obtain that $(3)$ implies $(4)$ by iterating the implication $(1) \implies (3)$.
\end{proof}

\begin{corollary}
\label{cor:ShiftedGreedy}
An infinite word $a$ over $\N$ belongs to $D_{\B}$ if and only if for all $n\in\N$, $\val_{\B^{(n)}}(\sigma^n(a))<1$.
\end{corollary}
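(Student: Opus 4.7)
The plan is to deduce this corollary directly from Proposition~\ref{prop:ShiftedGreedy}, which already handles $\B$-representations of reals in $[0,1]$ and the shifts $\sigma^n(a)$ for $n\ge 1$. The only extra content in the corollary is that the condition at $n=0$ (that is, $\val_{\B}(a)<1$) precisely captures both that $a$ is a $\B$-representation of something in $[0,1)$ and not just in $[0,1]$, and that this something is indeed $\val_{\B}(a)$.

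For the forward direction, I would assume $a\in D_{\B}$, so that $a=\DB(x)$ for some $x\in[0,1)$. Then $\val_{\B}(a)=x<1$ gives the $n=0$ case, and Proposition~\ref{prop:ShiftedGreedy}, item (2) applied to $a=\DB(x)$ with $x\in[0,1]$ gives $\val_{\B^{(n)}}(\sigma^n(a))<1$ for all $n\ge 1$.

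For the converse, I would start from the hypothesis that $\val_{\B^{(n)}}(\sigma^n(a))<1$ for every $n\in\N$. The $n=0$ case says $\val_{\B}(a)<1$; since all letters of $a$ are in $\N$, the series defining $\val_{\B}(a)$ has nonnegative terms, so $x:=\val_{\B}(a)\in[0,1)\subseteq[0,1]$. Thus $a$ is a $\B$-representation of a point of $[0,1]$, which places us in the setting of Proposition~\ref{prop:ShiftedGreedy}. The remaining hypotheses for $n\ge 1$ are exactly item (2) of that proposition, so its equivalence $(1)\Leftrightarrow(2)$ yields $a=\DB(x)$, and hence $a\in D_{\B}$.

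There is no real obstacle here: the argument is essentially bookkeeping to stitch together the $n=0$ condition (which locates $x$ in $[0,1)$ rather than $[0,1]$) with the $n\ge 1$ conditions (already identified in Proposition~\ref{prop:ShiftedGreedy} as characterizing the greedy expansion among all $\B$-representations). The only subtle point worth mentioning explicitly is that the convergence of $\val_{\B}(a)$ is guaranteed as soon as $\val_{\B}(a)<1$ in the extended sense, since the partial sums are nondecreasing and bounded above; this justifies treating $\val_{\B}(a)$ as a genuine element of $[0,1)$.
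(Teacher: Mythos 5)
Your proof is correct and is exactly the argument the paper intends: the corollary is stated without proof as an immediate consequence of Proposition~\ref{prop:ShiftedGreedy}, and your write-up simply makes explicit how the $n=0$ condition $\val_{\B}(a)<1$ places $x=\val_{\B}(a)$ in $[0,1)$ while the $n\ge 1$ conditions are item (2) of that proposition. Nothing is missing.
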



\begin{proposition}
\label{pro:GreedyLexGreatest}
The $\B$-expansion of a real number $x\in [0,1]$ is lexicographically maximal among all $\B$-representations of $x$.
\end{proposition}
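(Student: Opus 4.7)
The plan is to argue by comparing any $\B$-representation $a$ of $x$ to $\DB(x)=\varepsilon_0(x)\varepsilon_1(x)\cdots$ at the first position where they differ, and to derive a strict digit inequality from the greedy condition in Lemma~\ref{lem:Greedy}.

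First I would fix a $\B$-representation $a$ of $x$ with $a \ne \DB(x)$, and let $\ell\in\N$ be the smallest index such that $a_\ell \ne \varepsilon_\ell(x)$. Since $a_n=\varepsilon_n(x)$ for all $n<\ell$ and both series equal $x$, the tails starting at position $\ell$ are equal:
\[
\sum_{n=\ell}^{+\infty}\frac{a_n}{\prod_{i=0}^n\beta_i}=\sum_{n=\ell}^{+\infty}\frac{\varepsilon_n(x)}{\prod_{i=0}^n\beta_i}.
\]

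Next I would apply the greedy condition~\eqref{eq:GreedyCondition} from Lemma~\ref{lem:Greedy} to $\DB(x)$ at index $\ell$, which gives
\[
\sum_{n=\ell+1}^{+\infty}\frac{\varepsilon_n(x)}{\prod_{i=0}^n\beta_i}<\frac{1}{\prod_{i=0}^\ell\beta_i},
\]
so adding the $n=\ell$ term yields
\[
\sum_{n=\ell}^{+\infty}\frac{\varepsilon_n(x)}{\prod_{i=0}^n\beta_i}<\frac{\varepsilon_\ell(x)+1}{\prod_{i=0}^\ell\beta_i}.
\]
Combining with the equality of tails and the crude lower bound $\frac{a_\ell}{\prod_{i=0}^\ell \beta_i}\le\sum_{n=\ell}^{+\infty}\frac{a_n}{\prod_{i=0}^n\beta_i}$ (using $a_n\ge 0$), I get $a_\ell<\varepsilon_\ell(x)+1$, hence $a_\ell\le\varepsilon_\ell(x)$. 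Since $a_\ell\ne\varepsilon_\ell(x)$, we conclude $a_\ell<\varepsilon_\ell(x)$, which means $a<_{\lex}\DB(x)$.

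There is no genuine obstacle here: the whole argument is a direct exploitation of Lemma~\ref{lem:Greedy}, whose greedy inequality was precisely designed to prevent any digit from being pushed higher than the greedy one at the first position of disagreement. The only point requiring care is to remember that the $\B$-representation $a$ has nonnegative integer digits, so that dropping all its tail contributions beyond position $\ell$ gives a valid lower bound; the Cantor base condition $\prod_{n\in\N}\beta_n=+\infty$ is implicit through the convergence guaranteed by $\val_{\B}(a)=x$.
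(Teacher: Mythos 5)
Your proof is correct and follows essentially the same route as the paper's: both compare $a$ with $\DB(x)$ at the first index of disagreement, use the equality of the tails there, and invoke the greedy condition~\eqref{eq:GreedyCondition} of Lemma~\ref{lem:Greedy} at that index. The only difference is cosmetic: the paper argues by contradiction from $a_\ell>\varepsilon_\ell(x)$, while you derive $a_\ell\le\varepsilon_\ell(x)$ directly (using integrality of the digits), which is the same estimate rearranged.
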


\begin{proof}
Let $x\in [0,1]$ and $a\in\N^\N$ be a $\B$-representation of $x$.
Proceed by contradiction and suppose that $a >_{\lex} \DB(x)$. There exists $\ell\in\N$ such that $\varepsilon_0(x)\cdots \varepsilon_{\ell-1}(x)=a_0\cdots a_{\ell-1}$ and $a_\ell>\varepsilon_\ell(x)$. Then
\[
	\sum_{n=\ell}^{+\infty}\frac{\varepsilon_n(x)}{\prod_{i=0}^n{\beta_i}} 
	 = \sum_{n=\ell}^{+\infty}\frac{a_n}{\prod_{i=0}^n{\beta_i}} 
	 \ge \frac{\varepsilon_\ell(x)+1}{\prod_{i=0}^\ell{\beta_i}}
		+ \sum_{n=\ell+1}^{+\infty}\frac{a_n}{\prod_{i=0}^n{\beta_i}}
\] 
and hence
\[ 
	\sum_{n=\ell+1}^{+\infty}\frac{\varepsilon_n(x)}{\prod_{i=0}^n{\beta_i}} \ge \frac{1}{\prod_{i=0}^{\ell}{\beta_i}}
\]
which is impossible by Lemma~\ref{lem:Greedy}.
\end{proof}

\begin{proposition}
\label{pro:Increasing}
The function $\DB\colon [0,1]\to {A_{\B}}^\N$ is increasing: 
\[
	\forall x,y \in [0,1],\quad x<y \iff \DB(x) <_{\lex} \DB(y).
\]
\end{proposition}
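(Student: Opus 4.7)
The plan is to establish the forward implication $x<y \Rightarrow \DB(x) <_{\lex} \DB(y)$ by contradiction, and then deduce the reverse implication from it together with injectivity of $\val_{\B}$ on $\B$-expansions.

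For the forward direction, I would assume $x < y$ but $\DB(x) \ge_{\lex} \DB(y)$. Equality of the two sequences is ruled out because $x = \val_{\B}(\DB(x)) = \val_{\B}(\DB(y)) = y$, so we may assume the strict inequality $\DB(x) >_{\lex} \DB(y)$. Let $\ell$ be the first index where the two expansions differ, so $\varepsilon_n(x) = \varepsilon_n(y)$ for $n<\ell$ and $\varepsilon_\ell(x) \ge \varepsilon_\ell(y) + 1$. Writing
\[
	y = \sum_{n=0}^{\ell-1}\frac{\varepsilon_n(y)}{\prod_{i=0}^n \beta_i} + \frac{\varepsilon_\ell(y)}{\prod_{i=0}^\ell \beta_i} + \sum_{n=\ell+1}^{+\infty}\frac{\varepsilon_n(y)}{\prod_{i=0}^n \beta_i},
\]
the key tool is Lemma~\ref{lem:Greedy}, which bounds the tail: $\sum_{n=\ell+1}^{+\infty}\frac{\varepsilon_n(y)}{\prod_{i=0}^n \beta_i} < \frac{1}{\prod_{i=0}^\ell \beta_i}$. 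Hence
\[
	y < \sum_{n=0}^{\ell-1}\frac{\varepsilon_n(y)}{\prod_{i=0}^n \beta_i} + \frac{\varepsilon_\ell(y)+1}{\prod_{i=0}^\ell \beta_i} \le \sum_{n=0}^{\ell}\frac{\varepsilon_n(x)}{\prod_{i=0}^n \beta_i} \le x,
\]
contradicting $x < y$.

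For the reverse direction, I would argue by trichotomy. Suppose $\DB(x) <_{\lex} \DB(y)$. If $x = y$ then the two expansions coincide. If $y < x$, then by the forward implication just proved we would have $\DB(y) <_{\lex} \DB(x)$, contradicting our assumption. Hence $x < y$.

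I do not expect a genuine obstacle here; the only substantive step is the tail estimate from Lemma~\ref{lem:Greedy}, and the rest is bookkeeping. One small point to be careful about is the case $\ell = 0$, which is covered uniformly by interpreting the empty sum $\sum_{n=0}^{-1}$ as $0$.
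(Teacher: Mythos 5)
Your proposal is correct and is essentially the paper's own argument: both rest on the tail bound from Lemma~\ref{lem:Greedy} together with the fact that the greedy partial sums never exceed the value, and both settle the converse implication by trichotomy. The only cosmetic difference is that the paper proves $\DB(x)<_{\lex}\DB(y)\Rightarrow x<y$ directly while you run the identical computation with the roles of $x$ and $y$ swapped inside a proof by contradiction.
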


\begin{proof}
Suppose that $\DB(x)<_{\lex} \DB(y)$. There exists $\ell\in\N$ such that $\varepsilon_0(x)\cdots \varepsilon_{\ell-1}(x)=\varepsilon_0(y)\cdots \varepsilon_{\ell-1}(y)$ and $\varepsilon_\ell(x)<\varepsilon_\ell(y)$. By Lemma~\ref{lem:Greedy}, we get
\[
	x	 =\sum_{n\in\N}\frac{\varepsilon_n(x)}{\prod_{i=0}^n{\beta_i}} \\
		 < \sum_{n=0}^{\ell-1}\frac{\varepsilon_n(y)}{\prod_{i=0}^n{\beta_i}} 
	 		+ \frac{\varepsilon_\ell(y)-1}{\prod_{i=0}^\ell{\beta_i}}
			+ \frac{1}{\prod_{i=0}^\ell{\beta_i}} \\
		 = \sum_{n=0}^{\ell}\frac{\varepsilon_n(y)}{\prod_{i=0}^n{\beta_i}} \\
		 \le y.
\]
It follows immediately that $x<y$ implies $\DB(x) <_{\lex} \DB(y)$.
\end{proof}

\begin{corollary}
\label{cor:DB1LexGreatest}
If $a$ is an infinite word over $\N$ such that $\val_{\B}(a)\le 1$, then $a\le_{\lex} \DB(1)$. In particular, $\DB(1)$ is lexicographically maximal among all $\B$-representations of all real numbers in $[0,1]$.
\end{corollary}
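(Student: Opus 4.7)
The plan is to deduce the corollary immediately by combining the two propositions that have just been established: Proposition~\ref{pro:GreedyLexGreatest} gives lexicographic maximality of $\DB(x)$ among $\B$-representations of a single value $x\in[0,1]$, while Proposition~\ref{pro:Increasing} tells us that $\DB$ is order-preserving on $[0,1]$. Chaining these two observations handles any $\B$-representation whose value lies in $[0,1]$.

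More precisely, I would set $x=\val_{\B}(a)$ and note that the hypothesis $\val_{\B}(a)\le 1$ together with the nonnegativity of the digits places $x$ in $[0,1]$, so that $\DB(x)$ is well-defined via the greedy algorithm. Since $a$ is by definition a $\B$-representation of $x$, Proposition~\ref{pro:GreedyLexGreatest} yields $a\le_{\lex}\DB(x)$. Because $x\le 1$, Proposition~\ref{pro:Increasing} (applied in its forward direction, or rather its contrapositive on the strict inequality, together with the trivial case $x=1$) gives $\DB(x)\le_{\lex}\DB(1)$. Transitivity of the lexicographic order then yields $a\le_{\lex}\DB(1)$.

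For the second assertion, any $\B$-representation of any real number in $[0,1]$ is in particular an infinite word $a\in\N^\N$ with $\val_{\B}(a)\le 1$, so the first assertion applies verbatim. There is no real obstacle here; the only thing to be slightly careful about is ensuring that $\val_{\B}(a)=1$ is allowed (it is, since Proposition~\ref{pro:GreedyLexGreatest} and Proposition~\ref{pro:Increasing} are both stated on the closed interval $[0,1]$), and handling the boundary case $x=1$ in the monotonicity step, where $\DB(x)=\DB(1)$ trivially. Thus the corollary is essentially a one-line consequence of the two preceding propositions.
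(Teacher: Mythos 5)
Your proof is correct and is exactly the paper's argument: the paper likewise combines Proposition~\ref{pro:GreedyLexGreatest} and Proposition~\ref{pro:Increasing} to get $a\le_{\lex}\DB(\val_{\B}(a))\le_{\lex}\DB(1)$. No differences worth noting.
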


\begin{proof}
Let $a$ be an infinite word over $\N$ such that $\val_{\B}(a)\le 1$. By Propositions~\ref{pro:GreedyLexGreatest} and~\ref{pro:Increasing}, $a\le_{\lex}\DB(\val_{\B}(a))\le_{\lex}\DB(1)$.
\end{proof}

Recall the property of the $\beta$-expansions stating that considering two bases $\alpha$ and $\beta$, $\alpha < \beta$ if and only if $d_\alpha(1)< d_\beta(1)$ \cite{Parry:1960}. The following proposition shows the generalization of a weaker version of this property in the case of Cantor bases.

\begin{proposition}
Let $\boldsymbol{\alpha}=(\alpha_n)_{n\in\N}$ and $\B=(\beta_n)_{n\in\N}$ be two Cantor bases such that for all $n\in\N$, $\prod_{i=0}^n\alpha_i \le \prod_{i=0}^n\beta_i$. Then for all $x \in [0,1]$, we have $\DA(x)
\le_{\lex} \DB(x)$.
\end{proposition}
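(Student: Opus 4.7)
The plan is to interpret the $\boldsymbol\alpha$-expansion $a := \DA(x)$ as a $\B$-representation of some real number $y\le x$, and then deduce the conclusion from the monotonicity results already proved in this section.

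First I would observe that the hypothesis $\prod_{i=0}^n\alpha_i \le \prod_{i=0}^n\beta_i$ is equivalent to $\frac{1}{\prod_{i=0}^n\beta_i} \le \frac{1}{\prod_{i=0}^n\alpha_i}$ for every $n\in\N$. Since the digits $a_n$ of $\DA(x)$ are nonnegative integers, termwise comparison yields
\[
	\frac{a_n}{\prod_{i=0}^n\beta_i} \le \frac{a_n}{\prod_{i=0}^n\alpha_i}
	\quad\text{for every }n\in\N.
\]
Summing, and using that the right-hand series is convergent with sum $x$, I would conclude that the series defining $\val_{\B}(a)$ is a convergent series of nonnegative terms bounded above by $\val_{\Aalpha}(a) = x \le 1$. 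In particular $y := \val_\B(a)$ lies in $[0,1]$, and $a\in\N^\N$ is by construction a $\B$-representation of $y$.

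To finish, I would apply Proposition~\ref{pro:GreedyLexGreatest} to get $a \le_{\lex} \DB(y)$, and Proposition~\ref{pro:Increasing} (together with $y \le x$) to get $\DB(y) \le_{\lex} \DB(x)$. Chaining these two inequalities yields $\DA(x) = a \le_{\lex} \DB(x)$, as desired.

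I do not foresee a real obstacle here: the statement reduces cleanly to the two monotonicity results already at hand, once one notices that the hypothesis on the partial products is exactly what is needed to view the $\boldsymbol\alpha$-expansion of $x$ as a $\B$-representation of a value no larger than $x$. The only point requiring a line of justification is the convergence of $\val_\B(a)$, but this is immediate from the termwise domination.
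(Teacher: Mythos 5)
Your proof is correct, but it follows a genuinely different route from the paper's. The paper argues directly by contradiction: assuming $\DA(x)>_{\lex}\DB(x)$, it looks at the first index $\ell$ of disagreement, bounds the tail $\sum_{n=\ell+1}^{+\infty}\varepsilon_{\B,n}(x)/\prod_{i=0}^n\beta_i$ by $1/\prod_{i=0}^{\ell}\beta_i$ via the greedy condition of Lemma~\ref{lem:Greedy}, and then uses the hypothesis on the partial products to derive $x<x$. You instead reinterpret $a=\DA(x)$ as a $\B$-representation of $y=\val_{\B}(a)$, use the termwise domination $a_n/\prod_{i=0}^n\beta_i\le a_n/\prod_{i=0}^n\alpha_i$ to get $y\le x$ (which also settles convergence), and then chain Proposition~\ref{pro:GreedyLexGreatest} ($a\le_{\lex}\DB(y)$) with Proposition~\ref{pro:Increasing} ($\DB(y)\le_{\lex}\DB(x)$). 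Every step checks out: the cited propositions are stated for arbitrary $\B$-representations in $\N^\N$ of numbers in $[0,1]$, so there is no alphabet issue, and the monotonicity of $\DB$ in the non-strict form you need follows immediately from the stated equivalence. Your argument is arguably cleaner in that it isolates the single place where the hypothesis on the partial products enters (namely $y\le x$) and delegates all the inequality manipulation to results already proved; the paper's argument is more self-contained and makes the role of the greedy tail bound explicit, which mirrors the proofs of the two surrounding monotonicity propositions.
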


\begin{proof}
Let $x \in [0,1]$ and suppose to the contrary that $\DA(x)>_{\lex}\DB(x)$. Thus, there exists $\ell\in\N$ such that $\varepsilon_{\Aalpha,0}(x)\cdots \varepsilon_{\Aalpha,\ell-1}(x)=\varepsilon_{\B,0}(x)\cdots \varepsilon_{\B,\ell-1}(x)$ and $\varepsilon_{\Aalpha,\ell}(x)>\varepsilon_{\B,\ell}(x)$. From Lemma~\ref{lem:Greedy} and from the hypothesis, 
we obtain that
\[
	x	
	\le \sum_{n=0}^{\ell-1}
		\frac{\varepsilon_{\Aalpha,n}(x)}{\prod_{i=0}^n{\beta_i}} 
		+\frac{\varepsilon_{\Aalpha,\ell}(x)-1}{\prod_{i=0}^\ell{\beta_i}}
		+\sum_{n=\ell+1}^{+\infty}				
		\frac{\varepsilon_{\B,n}(x)}{\prod_{i=0}^n{\beta_i}} 
	<\sum_{n=0}^{\ell}
		\frac{\varepsilon_{\Aalpha,n}(x)}{\prod_{i=0}^n{\beta_i}} 
	\le \sum_{n=0}^{\ell}
		\frac{\varepsilon_{\Aalpha,n}(x)}{\prod_{i=0}^n{\alpha_i}}
	\le x,
\]
a contradiction.
\end{proof}

\begin{corollary}
Let $\boldsymbol{\alpha}=(\alpha_n)_{n\in\N}$ and $\B=(\beta_n)_{n\in\N}$ be two Cantor bases such that  for all $n\in\N$, $\alpha_n \le\beta_n$.
Then for all $x \in [0,1]$, we have $\DA(x)\le_{\lex} \DB(x)$.
\end{corollary}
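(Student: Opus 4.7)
The plan is straightforward: this corollary is an immediate consequence of the preceding proposition, so the only work is to verify that the hypothesis of the corollary implies the hypothesis of the proposition.

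First I would observe that for each $n\in\N$, the terms $\alpha_i$ and $\beta_i$ are all strictly greater than $1$, in particular strictly positive. Hence from $0 < \alpha_i \le \beta_i$ for all $i\in[\![0,n]\!]$ one obtains, by multiplying these inequalities together (or by a trivial induction on $n$),
\[
	\prod_{i=0}^n \alpha_i \le \prod_{i=0}^n \beta_i.
\]
This is exactly the hypothesis of the preceding proposition. Applying that proposition to the Cantor bases $\Aalpha$ and $\B$ then yields $\DA(x)\le_{\lex} \DB(x)$ for every $x\in[0,1]$, which is the conclusion.

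There is no real obstacle here; the only thing worth being careful about is that the inequality between products is genuinely derived from termwise inequalities of \emph{positive} numbers (which is guaranteed since each $\alpha_i>1>0$), so that multiplying preserves the direction of the inequality. No further computation is needed.
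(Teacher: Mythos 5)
Your proposal is correct and matches the paper's (implicit) argument: the corollary is stated there without proof precisely because termwise inequalities between the $\alpha_i>1$ and $\beta_i$ immediately give $\prod_{i=0}^n\alpha_i\le\prod_{i=0}^n\beta_i$ for every $n$, which is the hypothesis of the preceding proposition. Nothing is missing.
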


It is not true that $\DA(1)<_{\lex} \DB(1)$ implies that for all $n\in\N$, $\prod_{i=0}^{n}\alpha_i \le \prod_{i=0}^{n}\beta_i$ as the following example shows. The same example shows that the lexicographic order on the Cantor bases is not sufficient either. Here, the term lexicographic order refers to the following order: $\Aalpha<\B$ whenever there exists $\ell\in\N$ such that $\alpha_n=\beta_n$ for $n\in[\![0,\ell-1]\!]$ and $\alpha_\ell<\beta_\ell$.

\begin{example}
Let $\Aalpha=(\overline{2+\sqrt{3},2})$ and $\B=(\overline{2+\sqrt{2},5})$. Then $\DA(1)=31^\omega$ and $\DB(1)$ starts with the prefix $32$, hence $\DA(1)<_{\lex} \DB(1)$.
\end{example}

\section{Quasi-greedy expansions}
\label{sec:QuasiGreedyExpansions}

A $\B$-representation is said to be \emph{finite} if it ends with infinitely many zeros, and \emph{infinite} otherwise. The \emph{length} of a finite $\B$-representation is the length of the longest prefix ending in a non-zero digit. When a $\B$-representation is finite, we usually omit to write the tail of zeros.
 
When the $\B$-expansion of $1$ is finite, we show how to modify it in order to obtain an infinite $\B$-representation of $1$ that is lexicographically maximal among all infinite $\B$-representations of $1$. The obtained $\B$-representation is denoted by $\qDB(1)$ and is called the \emph{quasi-greedy $\B$-expansion} of $1$. It is defined recursively as follows:
\begin{align}
\label{def:quasigreedy}
	\DB^*(1)
	=\begin{cases}
		\DB(1) 	&\text{if } \DB(1) \text{ is infinite} \\
		\varepsilon_0\cdots \varepsilon_{\ell-2}(\varepsilon_{\ell-1} -1)d_{\B^{(\ell)}}^{*}(1) 		
		&\text{if } \DB(1)=\varepsilon_0\cdots \varepsilon_{\ell-1} \text{ with } \ell \in \N_{\ge 1},\ \varepsilon_{\ell-1} >0.
	\end{cases}
\end{align}

\begin{example}\label{ex2:3phiphi}
Let $\B=(\overline{3,\varphi,\varphi})$ the alternate base already considered in Example~\ref{ex:3phiphi}. Then we directly have that for all $m\in\N$, $\qDBi{3m+2}(1)=\DBi{3m+2}(1)=1(110)^\omega$. In order to compute $\qDBi{3m}(1)$ and $\qDBi{3m+1}(1)$, we need to go through the definition several times. For all $m\in\N$, we compute $\qDBi{3m}(1)=2\qDBi{3m+1}(1)=210\qDBi{3m+3}(1)=210\qDBi{3m}(1)=(210)^\omega$ and $\qDBi{3m+1}(1) =10\qDBi{3m+3}(1)=10(210)^\omega=(102)^\omega$. 
\end{example}

\begin{example}
Let $\B=(\overline{\beta_0,\ldots,\beta_{p-1}})$ be an alternate base such that for all $i\in\Int$, $\beta_i\in\N_{\ge 2}$. Then for all $i\in\Int$, $\DBi{i}(1)=\beta_i0^\omega$ and 
\[
	\qDBi{i}(1)=((\beta_i-1)\cdots(\beta_{p-1}-1)(\beta_0-1)\ldots(\beta_{i-1}-1))^\omega.
\]
\end{example}

When $\B=(\beta,\beta,\ldots)$, we recover the usual definition of the quasi-greedy $\beta$-expansion~\cite{Daroczy&Katai:1995,Komornik&Loreti:2007}. In particular, it is easy to check that in this case, if $\DB(1)=\varepsilon_0\cdots \varepsilon_{\ell-1}$ with $\ell\in\N_{\ge 1}$ and $\varepsilon_{\ell-1}> 0$, then the quasi-greedy expansion is purely periodic and $\qDB(1)=(\varepsilon_0\ldots \varepsilon_{\ell-2}(\varepsilon_{\ell-1}-1))^\omega$. For arbitrary Cantor bases, the situation is more complicated and the quasi-greedy expansion can be not periodic. 

\begin{example}
\label{ex:1+sqrt{13}}
Consider the alternate base $\B=\big(\overline{\frac{1+\sqrt{13}}{2},\frac{5+\sqrt{13}}{6}}\big)$. We compute $\DB(1)=201$ and $\DBi{1}(1)=11$. Then $\qDBi{1}(1)=(10)^\omega$ and $\qDB(1)=200\qDBi{1}(1)=200(10)^\omega$.
\end{example}

Moreover, even if the $\B$-expansion is finite, the quasi-greedy $\B$-representation can be infinite not ultimately periodic. Suppose that $\DB(1)$ is finite and that an infinite quasi-greedy is involved during the computation of $\qDB(1)$. Let $n\in \N_{\ge 1}$ be the positive integer such that $\qDBi{n}(1)$ is the  involved infinite expansion. Then $\qDB(1)$ is ultimately periodic if and only if so is $\qDBi{n}(1)$.

\begin{example}
\label{ex:PisotQuadratic}
Consider the Cantor base $\B=(3,\beta,\beta,\beta,\beta,\ldots)$ where $\beta=\sqrt{6}(2+\sqrt{6})$. We get $\DB(1)=3$ and $\DBi{1}(1)=d_{\beta}(1)$ is infinite not ultimately periodic since $\beta$ is a non-Pisot quadratic number~\cite{Bassino:2002}. Therefore, the quasi-greedy expansion $\qDB(1)=2\qDBi{1}(1)$ is not ultimately periodic.
\end{example}

\begin{proposition}
\label{prop:QuasiGreedyRep1}
The quasi-greedy expansion $\qDB(1)$ is a $\B$-representation of $1$.
\end{proposition}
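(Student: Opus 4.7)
The plan is to handle this by unfolding the recursive definition of $\qDB(1)$ and computing partial sums. The base case of the recursion is easy: if $\DB(1)$ is infinite, then $\qDB(1)=\DB(1)$ is by construction a $\B$-representation of $1$. The substance is therefore in the recursive case, where $\DB(1)=\varepsilon_0\cdots\varepsilon_{\ell-1}$ is finite with $\varepsilon_{\ell-1}>0$. The naive approach would be induction on some measure of the recursion, but there is no guarantee that the recursion terminates after finitely many steps (witness Example~\ref{ex:PisotQuadratic}), so I must control the tail.

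I would introduce a sequence $0=\ell_0<\ell_1<\ell_2<\cdots$ defined inductively by $\ell_{k+1}=\ell_k+m_k$, where $m_k$ is the length of $\DBi{\ell_k}(1)$ whenever this expansion is finite. The sequence either terminates at some $K$ with $\DBi{\ell_K}(1)$ infinite, or it continues forever. In both situations, a straightforward unfolding of definition~\eqref{def:quasigreedy} shows that $\qDB(1)$ equals the concatenation of the blocks $w^{(k)}=\eta^{(k)}_0\cdots\eta^{(k)}_{m_k-2}(\eta^{(k)}_{m_k-1}-1)$, where $\eta^{(k)}_0\cdots\eta^{(k)}_{m_k-1}=\DBi{\ell_k}(1)$, followed in the terminating case by $\DBi{\ell_K}(1)$.

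The key computation is the following. Let $S_N=\sum_{n=0}^{N-1}[\qDB(1)]_n/\prod_{i=0}^n\beta_i$. Because $w^{(k)}$ differs from $\DBi{\ell_k}(1)$ only in the last digit, and $\val_{\B^{(\ell_k)}}(\DBi{\ell_k}(1))=1$, I obtain
\[
    S_{\ell_{k+1}}-S_{\ell_k}
    =\frac{1}{\prod_{i=0}^{\ell_k-1}\beta_i}\left(1-\frac{1}{\prod_{i=\ell_k}^{\ell_{k+1}-1}\beta_i}\right)
    =\frac{1}{\prod_{i=0}^{\ell_k-1}\beta_i}-\frac{1}{\prod_{i=0}^{\ell_{k+1}-1}\beta_i},
\]
where the empty product equals $1$. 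Telescoping from $k=0$ gives $S_{\ell_k}=1-1/\prod_{i=0}^{\ell_k-1}\beta_i$. In the terminating case, the remaining contribution of the tail $\DBi{\ell_K}(1)$ is exactly $1/\prod_{i=0}^{\ell_K-1}\beta_i$ (since $\DBi{\ell_K}(1)$ represents $1$ in base $\B^{(\ell_K)}$), so $\val_\B(\qDB(1))=1$.

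The main obstacle is the non-terminating case, and this is precisely where the Cantor base hypothesis enters. Since $\prod_{n\in\N}\beta_n=+\infty$, the subsequence $S_{\ell_k}$ tends to $1$. Because the digits of $\qDB(1)$ are nonnegative, the full sequence $(S_N)_{N\in\N}$ is nondecreasing, and each $S_N$ lies between two consecutive terms $S_{\ell_k}$ and $S_{\ell_{k+1}}$, both bounded above by $1$. Hence $S_N\to 1$, which gives $\val_\B(\qDB(1))=1$ and completes the proof.
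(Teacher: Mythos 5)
Your proof is correct and is exactly the verification the paper dismisses as ``straightforward'': unfolding the recursion into blocks, telescoping the partial sums to $S_{\ell_k}=1-1/\prod_{i=0}^{\ell_k-1}\beta_i$, and invoking $\prod_{n\in\N}\beta_n=+\infty$ in the non-terminating case. You have simply written out the details the authors omit, including the necessary case split between terminating and non-terminating recursions.
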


\begin{proof}
It is a straightforward verification.
\end{proof}

\begin{proposition}
\label{prop:qDB1LexGreatest}
If $a$ is an infinite word over $\N$ such that $\val_{\B}(a)< 1$, then $a<_{\lex} \qDB(1)$. Furthermore, $\qDB(1)$ is lexicographically maximal among all infinite $\B$-representations of all real numbers in $[0,1]$.
\end{proposition}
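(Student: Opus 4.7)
The plan is to prove both claims via a direct value comparison at the first differing position, after establishing the key auxiliary bound $\val_{\B^{(n)}}(\sigma^n(\qDB(1)))\le 1$ for every $n\in\N$. To obtain this bound, I would unfold the recursive definition~\eqref{def:quasigreedy} to produce a (possibly terminating) sequence of breakpoints $0=n_0<n_1<\cdots$ where, whenever $\DBi{n_k}(1)$ is finite of length $\ell_k$, we set $n_{k+1}=n_k+\ell_k$. By construction $\sigma^{n_k}(\qDB(1))=\qDBi{n_k}(1)$, so Proposition~\ref{prop:QuasiGreedyRep1} applied to $\B^{(n_k)}$ gives value~$1$ at each breakpoint. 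For $n$ strictly between two consecutive breakpoints (or past the terminal breakpoint, if the sequence terminates), a short telescoping calculation combining the tail contribution $\val_{\B^{(n_{k+1})}}(\qDBi{n_{k+1}}(1))=1$ with the finite block given by $\DBi{n_k}(1)$ having its last digit decremented by~$1$ shows that $\val_{\B^{(n)}}(\sigma^n(\qDB(1)))$ coincides with $\val_{\B^{(n)}}(\sigma^{n-n_k}(\DBi{n_k}(1)))$, which is strictly less than~$1$ by the greedy condition of Lemma~\ref{lem:Greedy} applied to $\DBi{n_k}(1)$. This telescoping identity is the main technical step.

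With this bound in hand, suppose $a$ is an infinite word over $\N$ with $\val_\B(a)<1$ and, for contradiction, $a\ge_{\lex}\qDB(1)$. Since $\val_\B(a)\ne 1=\val_\B(\qDB(1))$, we have $a\ne\qDB(1)$, so in fact $a>_{\lex}\qDB(1)$. Let $m$ be the first differing position, so $a_m-\qDB(1)_m\ge 1$. Splitting values at position~$m$ gives
\[
\prod_{i=0}^m\beta_i\,(\val_\B(a)-1)=(a_m-\qDB(1)_m)+\val_{\B^{(m+1)}}(\sigma^{m+1}(a))-\val_{\B^{(m+1)}}(\sigma^{m+1}(\qDB(1))),
\]
and the right-hand side is at least $1+0-1=0$ using the auxiliary bound together with $\val_{\B^{(m+1)}}(\sigma^{m+1}(a))\ge 0$. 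This contradicts $\val_\B(a)<1$.

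For the ``furthermore'' assertion, let $a$ be any infinite $\B$-representation of some $x\in[0,1]$. If $x<1$, the first part applies. If instead $x=1$ and $a>_{\lex}\qDB(1)$, the same identity with left-hand side equal to~$0$ forces equality in every estimate, namely $a_m-\qDB(1)_m=1$, $\val_{\B^{(m+1)}}(\sigma^{m+1}(\qDB(1)))=1$, and $\val_{\B^{(m+1)}}(\sigma^{m+1}(a))=0$; the last equality forces $a_n=0$ for all $n>m$, contradicting the infiniteness of $a$.
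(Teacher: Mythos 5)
Your proof is correct, but it is organized differently from the paper's. The paper argues by iterating a lexicographic sandwich: assuming $\val_{\B}(a)<1$ and $a\ge_{\lex}\qDB(1)$, it invokes Corollary~\ref{cor:DB1LexGreatest} to get $a<_{\lex}\DB(1)$, deduces that $a$ must begin with $\varepsilon_0\cdots\varepsilon_{\ell-2}(\varepsilon_{\ell-1}-1)$ and that $\val_{\B^{(\ell)}}(\sigma^{\ell}(a))<1$, then repeats the argument on the shifted word; in the limit $a=\qDB(1)$, contradicting $\val_{\B}(a)<1$ (and the ``furthermore'' part reruns the same iteration with non-strict inequalities). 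You instead front-load the unfolding of the recursion~\eqref{def:quasigreedy} into a standalone tail bound, $\val_{\B^{(n)}}(\sigma^n(\qDB(1)))\le 1$ for all $n$, with equality exactly at the breakpoints $n_k$, and then finish both assertions with a single value split at the first differing index. Your telescoping identity is sound: for $n_k<n<n_{k+1}$, the term $-1/\prod_{i=n}^{n_{k+1}-1}\beta_i$ coming from the decremented digit cancels against the contribution of the unit-value tail $\qDBi{n_{k+1}}(1)$, leaving $\val_{\B^{(n)}}\bigl(\sigma^{n-n_k}(\DBi{n_k}(1))\bigr)=r_{n-n_k-1}(1)<1$ by the greedy algorithm in base $\B^{(n_k)}$. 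What your organization buys: the equality case in the ``furthermore'' part becomes a clean forcing argument ($\val_{\B^{(m+1)}}(\sigma^{m+1}(a))=0$ makes $a$ finite), and the auxiliary bound is a reusable fact in its own right --- it is essentially the statement that $\qDB(1)$ belongs to $S_{\B}$, which the paper only obtains later via Proposition~\ref{prop:ParryDS}. The cost is that the recursion-unfolding has not disappeared, only migrated into the proof of the auxiliary bound, which you should write out in full since it carries the real content.
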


\begin{proof}
If $\DB(1)$ is infinite then the result follows from Corollary~\ref{cor:DB1LexGreatest}. Thus, we suppose that there exists $\ell\in\N_{\ge 1}$ such that $\DB(1)=\varepsilon_0\cdots \varepsilon_{\ell-1}$  and $\varepsilon_{\ell-1}> 0$. 
 
First, let $a\in\N^{\N}$ be such that $\val_{\B}(a)< 1$ and suppose to the contrary that $a\ge_{\lex} \qDB(1)$. By Corollary~\ref{cor:DB1LexGreatest}, $a<_{\lex} \DB(1)$. Then $a_0\cdots a_{\ell-2}=\varepsilon_0\cdots \varepsilon_{\ell-2}$, $a_{\ell-1}=\varepsilon_{\ell-1}-1$ and $\sigma^{\ell}(a)\ge_{\lex}\qDBi{\ell}(1)$. 
Since
\begin{align*}
	\val_{\B}(a)
	&=\sum_{n=0}^{\ell-2}\frac{\varepsilon_n}{\prod_{i=0}^n{\beta_i}}
	+\frac{\varepsilon_{\ell-1}-1}{\prod_{i=0}^{\ell-1}{\beta_i}}	
	+\frac{\val_{\B^{(\ell)}}\big(\sigma^{\ell}(a)\big)}{\prod_{i=0}^{\ell-1}{\beta_i}}\\
	&=1-\frac{1}{\prod_{i=0}^{\ell-1}{\beta_i}}
	\left(
	1-\val_{\B^{(\ell)}}\big(\sigma^{\ell}(a)\big)
	\right),	
\end{align*}
we get that $\val_{\B^{(\ell)}}\big(\sigma^{\ell}(a)\big)<1$. By Corollary~\ref{cor:DB1LexGreatest} again, $\sigma^{\ell}(a)<_{\lex} \DBi{\ell}(1)$. Therefore $\DBi{\ell}(1)$ must be finite and we obtain that $a=\qDB(1)$ by iterating the reasoning. But then $\val_{\B}(a)=1$, a contradiction. 

We now turn to the second part. Suppose that $a\in\N^{\N}$ does not end in $0^\omega$ and is such that $\val_{\B}(a)\le 1$. Our aim is to show that $a\le_{\lex}\qDB(1)$. We know from Corollary~\ref{cor:DB1LexGreatest} that $a\le_{\lex}\DB(1)$. Now, suppose to the contrary that $a>_{\lex}\qDB(1)$. Then $a_0\cdots a_{\ell-2}=\varepsilon_0\cdots \varepsilon_{\ell-2}$, $a_{\ell-1}=\varepsilon_{\ell-1}-1$, and $\sigma^{\ell}(a) >_{\lex} \qDBi{\ell}(1)$. As in the first part of the proof, we obtain that $\val_{\B^{(\ell)}}(\sigma^{\ell}(a))\le 1$ and that $\DBi{\ell}(1)$ must be finite. By iterating the reasoning, we obtain that $a=\qDB(1)$, a contradiction. 
\end{proof}

\section{Admissible sequences}
\label{sec:AdmissibleSequences}

In \cite{Parry:1960}, Parry characterized those infinite words over $\N$ that belong to $D_\beta$.
Such infinite words are sometimes called \emph{$\beta$-admissible sequences}. Analogously, infinite word in $D_{\B}$ are said to be a \emph{$\B$-admissible sequence}. In this section, we generalize Parry's theorem to Cantor bases.

\begin{lemma}
\label{lem:Parry1}
Let $a$ be an infinite word over $\N$ and for each $n\in\N$, let $b^{(n)}$ be a $\B^{(n)}$-representation of $1$. Suppose that for all $n\in\N$, $\sigma^n(a)\le_{\lex} b^{(n)}$. Then for all $k,\ell,m,n\in\N$ with $\ell\ge 1$, the following implication holds:
\begin{equation}
\label{eq:lemParry1}
	a_{k}\cdots a_{k+\ell-1}
	<_{\lex}b^{(n)}_{m}\cdots b^{(n)}_{m+\ell-1} 
	\implies \val_{\B^{(k)}}(a_{k}\cdots a_{k+\ell-1})
				\le \val_{\B^{(k)}}(b^{(n)}_{m}\cdots b^{(n)}_{m+\ell-1}).
\end{equation}
Consequently, for all $k,m,n\in\N$, the following implication holds:
\begin{equation}
\label{eq:lemParry2}
	\sigma^k(a)<_{\lex}\sigma^m(b^{(n)})
	\implies \val_{\B^{(k)}}(\sigma^k(a))
				\le \val_{\B^{(k)}}(\sigma^m(b^{(n)})).
\end{equation}
\end{lemma}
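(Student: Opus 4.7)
The plan is to prove implication~\eqref{eq:lemParry1} by induction on $\ell$, keeping the parameters $k,m,n$ free, and then to deduce~\eqref{eq:lemParry2} by passing to the limit. The base case $\ell=1$ is immediate from $a_k<b^{(n)}_m$ (integers). For the inductive step, I would let $r\in\{0,\dots,\ell-1\}$ be the first position at which the two finite words disagree and distinguish two cases.

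If $r\ge 1$, the leading letters agree and factoring $1/\beta_k$ reduces the claim at length $\ell$ to the inductive statement at length $\ell-1$ applied to the shifted finite words $a_{k+1}\cdots a_{k+\ell-1}$ and $b^{(n)}_{m+1}\cdots b^{(n)}_{m+\ell-1}$, with parameters $(k+1,m+1,n)$. The substantive case is $r=0$. Writing $u$ and $v$ for the two finite words of length $\ell$, I would use the identity
\[
\val_{\B^{(k)}}(v)-\val_{\B^{(k)}}(u)=\frac{1}{\beta_k}\Bigl[(b^{(n)}_m-a_k)-\val_{\B^{(k+1)}}(a_{k+1}\cdots a_{k+\ell-1})+\val_{\B^{(k+1)}}(b^{(n)}_{m+1}\cdots b^{(n)}_{m+\ell-1})\Bigr]
\]
together with $b^{(n)}_m-a_k\ge 1$ and nonnegativity of the last summand to reduce the task to the bound $\val_{\B^{(k+1)}}(a_{k+1}\cdots a_{k+\ell-1})\le 1$. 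This is where the global hypothesis enters: from $\sigma^{k+1}(a)\le_{\lex}b^{(k+1)}$ it follows that $a_{k+1}\cdots a_{k+\ell-1}\le_{\lex}b^{(k+1)}_0\cdots b^{(k+1)}_{\ell-2}$, and invoking the inductive hypothesis at length $\ell-1$ with parameters $(k+1,0,k+1)$ (trivial in the equality case, strict otherwise) yields
\[
\val_{\B^{(k+1)}}(a_{k+1}\cdots a_{k+\ell-1})\le\val_{\B^{(k+1)}}(b^{(k+1)}_0\cdots b^{(k+1)}_{\ell-2})\le\val_{\B^{(k+1)}}(b^{(k+1)})=1,
\]
the last step being nonnegativity of the discarded tail of the $1$-representation $b^{(k+1)}$.

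Implication~\eqref{eq:lemParry2} then follows by letting $r$ denote the first disagreement position between $\sigma^k(a)$ and $\sigma^m(b^{(n)})$, applying~\eqref{eq:lemParry1} to the length-$\ell$ prefixes for every $\ell>r$, and passing to the limit $\ell\to\infty$ (whenever the two series defining the values converge).

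The main obstacle I expect is the case $r=0$ of the inductive step: one must bound a prefix of $a$ by~$1$, and this is precisely what forces~\eqref{eq:lemParry1} to be stated uniformly in the representation on the right. Keeping $n$ free is what allows the recursion, inside an instance with one value of $n$, to be re-applied against the \emph{different} given representation $b^{(k+1)}$; without this flexibility, the tail bound would not close.
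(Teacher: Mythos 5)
Your proposal is correct and follows essentially the same route as the paper's proof: induction on $\ell$ with the split into the cases $a_k=b^{(n)}_m$ and $a_k<b^{(n)}_m$, and in the latter case invoking the inductive hypothesis against the representation $b^{(k+1)}$ (via the hypothesis $\sigma^{k+1}(a)\le_{\lex}b^{(k+1)}$) to bound $\val_{\B^{(k+1)}}(a_{k+1}\cdots a_{k+\ell-1})$ by $1$. Your closing remark about why $n$ must be kept free is exactly the point on which the paper's argument also hinges.
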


\begin{proof}
Proceed by induction on $\ell$. The base case $\ell=1$ is clear. Let $\ell\ge 2$ and suppose that for all $\ell'<\ell$ and all $k,m,n\in\N$, the implication~\eqref{eq:lemParry1} is true. Now let $k,m,n\in\N$ and suppose that $a_{k}\cdots a_{k+\ell-1}<_{\lex}b^{(n)}_{m}\cdots b^{(n)}_{m+\ell-1}$. Two cases are possible.

Case 1: $a_{k}=b^{(n)}_{m}$. Then $a_{k+1}\cdots a_{k+\ell-1}<_{\lex} b^{(n)}_{m+1}\cdots b^{(n)}_{m+\ell-1}$ and by induction hypothesis, we obtain that
$\val_{\B^{(k+1)}}(a_{k+1}\cdots a_{k+\ell-1})\le \val_{\B^{(k+1)}}(b^{(n)}_{m+1}\cdots b^{(n)}_{m+\ell-1})$. Therefore
\begin{align*}
	\val_{\B^{(k)}}(a_{k}\cdots a_{k+\ell-1})
	& = \frac{a_{k}}{\beta_{k}} 
		+ \frac{\val_{\B^{(k+1)}}(a_{k+1}\cdots a_{k+\ell-1})}{\beta_{k}} \\
	& \le \frac{b^{(n)}_{m}}{\beta_{k}} 
		+ \frac{\val_{\B^{(k+1)}}(b^{(n)}_{m+1}\cdots b^{(n)}_{m+\ell-1})}{\beta_{k}} \\
	& = \val_{\B^{(k)}}(b^{(n)}_{m}\cdots b^{(n)}_{m+\ell-1}). 
\end{align*}

Case 2: $a_{k}<b^{(n)}_{m}$. Since $\sigma^{k+1}(a) \le_{\lex} b^{(k+1)}$ by hypothesis, we have 
\[
	a_{k+1}\cdots a_{k+\ell-1} \le_{\lex} b_0^{(k+1)}\cdots b_{\ell-2}^{(k+1)}.
\] 
By induction hypothesis,
\[
	\val_{\B^{(k+1)}}(a_{k+1}\cdots a_{k+\ell-1}) 
	\le \val_{\B^{(k+1)}}(b_0^{(k+1)}\cdots b_{\ell-2}^{(k+1)})
\le 1.
\] 
Then
\begin{align*}
	\val_{\B^{(k)}}(a_{k}\cdots a_{k+\ell-1})
	& = \frac{a_{k}}{\beta_{k}} 
		+ \frac{\val_{\B^{(k+1)}}(a_{k+1}\cdots a_{k+\ell-1})}{\beta_{k}} \\
	& \le \frac{b^{(n)}_{m}-1}{\beta_{k}} 
		+ \frac{\val_{\B^{(k+1)}}(b_0^{(k+1)}\cdots b_{\ell-2}^{(k+1)})}{\beta_{k}} \\
 	& \le \val_{\B^{(k)}}(b^{(n)}_{m}\cdots b^{(n)}_{m+\ell-1}). 	
\end{align*}
Thus, the implication~\eqref{eq:lemParry1} is proved. The implication~\eqref{eq:lemParry2} immediately follows.
\end{proof}

\begin{lemma}
\label{lem:Parry2}
Let $a$ be an infinite word over $\N$ and for each $n\in\N$, let $b^{(n)}$ be a $\B^{(n)}$-representation of $1$. Suppose that for all $n\in\N$, $\sigma^n(a) <_{\lex} b^{(n)}$. Then for all $n\in\N$, $\val_{\B^{(n)}}(\sigma^n(a))<1$ unless there exists $\ell \in\N_{\ge 1}$ such that
\begin{itemize}
\item $b^{(n)}=b^{(n)}_0\cdots b^{(n)}_{\ell-1}$ with $b^{(n)}_{\ell-1}>0$
\item $a_na_{n+1}\cdots a_{n+\ell-1}=b^{(n)}_0\cdots b^{(n)}_{\ell-2}(b^{(n)}_{\ell-1}-1)$
\item $\val_{\B^{(n+\ell)}}(\sigma^{n+\ell}(a))=1$
\end{itemize}
in which case $\val_{\B^{(n)}}(\sigma^n(a))=1$.
\end{lemma}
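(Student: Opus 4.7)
The plan is to reduce both directions of the statement to Lemma~\ref{lem:Parry1}. That lemma, applied to the hypothesis $\sigma^n(a)<_{\lex} b^{(n)}$, already yields $\val_{\B^{(n)}}(\sigma^n(a))\le \val_{\B^{(n)}}(b^{(n)})=1$ for every $n\in\N$, so the only remaining task is to characterize when equality occurs.

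Fix $n\in\N$ and assume that $\val_{\B^{(n)}}(\sigma^n(a))=1$. Because $\sigma^n(a)<_{\lex} b^{(n)}$, there is a smallest $\ell\in\N_{\ge 1}$ such that $a_{n+i}=b^{(n)}_i$ for all $0\le i\le \ell-2$ and $a_{n+\ell-1}<b^{(n)}_{\ell-1}$. I would then rewrite both identities $\val_{\B^{(n)}}(\sigma^n(a))=1$ and $\val_{\B^{(n)}}(b^{(n)})=1$ by splitting each sum at index $\ell$, and subtract; after multiplication by $\prod_{j=0}^{\ell-1}\beta_{n+j}$, the common prefix cancels and one is left with
\[
\val_{\B^{(n+\ell)}}(\sigma^{n+\ell}(a)) - \val_{\B^{(n+\ell)}}(\sigma^\ell(b^{(n)})) = b^{(n)}_{\ell-1} - a_{n+\ell-1} \ge 1.
\]
Applying Lemma~\ref{lem:Parry1} again, this time to $\sigma^{n+\ell}(a)<_{\lex} b^{(n+\ell)}$, gives $\val_{\B^{(n+\ell)}}(\sigma^{n+\ell}(a))\le 1$. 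Combined with the nonnegativity of $\val_{\B^{(n+\ell)}}(\sigma^\ell(b^{(n)}))$, this is only compatible with the three simultaneous equalities $\val_{\B^{(n+\ell)}}(\sigma^{n+\ell}(a))=1$, $b^{(n)}_{\ell-1}-a_{n+\ell-1}=1$, and $\val_{\B^{(n+\ell)}}(\sigma^\ell(b^{(n)}))=0$.

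The last of these, together with the fact that the digits of $b^{(n)}$ are nonnegative integers, forces $\sigma^\ell(b^{(n)})=0^\omega$, so $b^{(n)}$ is finite of length at most $\ell$; and since $b^{(n)}_{\ell-1}=a_{n+\ell-1}+1\ge 1$, its length is exactly $\ell$ with $b^{(n)}_{\ell-1}>0$. The three bullet points in the statement are then just a repackaging of what has been established. Conversely, if those three bullets hold, a direct computation using $\val_{\B^{(n)}}(b^{(n)})=1$ and $\val_{\B^{(n+\ell)}}(\sigma^{n+\ell}(a))=1$ recovers $\val_{\B^{(n)}}(\sigma^n(a))=1$, so no additional idea is needed for that direction.

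As for the main obstacle, it is essentially bookkeeping: the two expansions $\sigma^n(a)$ and $b^{(n)}$ live over the shifted Cantor base $\B^{(n)}$ while their tails $\sigma^{n+\ell}(a)$ and $\sigma^\ell(b^{(n)})$ live over $\B^{(n+\ell)}$, and the crucial cancellation only becomes visible after carefully aligning the common prefix of length $\ell-1$ and normalizing by $\prod_{j=0}^{\ell-1}\beta_{n+j}$. Once that subtraction is written out cleanly, every subsequent step is dictated by Lemma~\ref{lem:Parry1} and the nonnegativity of the digits.
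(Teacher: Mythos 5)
Your proposal is correct and follows essentially the same route as the paper: both locate the first index $\ell$ of disagreement, decompose the value at position $\ell$, invoke Lemma~\ref{lem:Parry1} on the tail $\sigma^{n+\ell}(a)<_{\lex}b^{(n+\ell)}$ to bound $\val_{\B^{(n+\ell)}}(\sigma^{n+\ell}(a))\le 1$, and then read off the equality case. The paper phrases this as a single chain of inequalities with ``equality holds throughout if and only if,'' while you subtract the two representations of $1$ explicitly; this is only a cosmetic rearrangement of the same computation.
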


\begin{proof}
Let $n\in\N$. By hypothesis, $\sigma^n(a) <_{\lex} b^{(n)}$. So there exists $\ell\in\N_{\ge 1}$ such that $a_n \cdots a_{n+\ell-2} = b^{(n)}_0\cdots b^{(n)}_{\ell-2}$ and $a_{n+\ell-1}<b^{(n)}_{\ell-1}$. By hypothesis, we also have $\sigma^{n+\ell}(a)<_{\lex}b^{(n+\ell)}$. We get from Lemma~\ref{lem:Parry1} that 
\[
	\val_{\B^{(n+\ell)}}(\sigma^{n+\ell}(a))
	\le \val_{\B^{(n+\ell)}}(b^{(n+\ell)})=1.
\] 
Then
\begin{align*}
	\val_{\B^{(n)}}(\sigma^n(a)) 
	& = \val_{\B^{(n)}}(a_{n}\cdots a_{n+\ell-2})
		+\frac{a_{n+\ell-1}}{\prod_{i=n}^{n+\ell-1}\beta_i}
		+\frac{\val_{\B^{(n+\ell)}}(\sigma^{n+\ell}(a))}{\prod_{i=n}^{n+\ell-1}\beta_i}\\
	& \le \val_{\B^{(n)}}(b^{(n)}_0\cdots b^{(n)}_{\ell-2})
		+\frac{b^{(n)}_{\ell-1}-1}{\prod_{i=n}^{n+\ell-1}\beta_i}
		+\frac{1}{\prod_{i=n}^{n+\ell-1}\beta_i}\\
	& = \val_{\B^{(n)}}(b^{(n)}_0\cdots b^{(n)}_{\ell-1}) \\
	& \le 1.
\end{align*}
Moreover, the equality holds throughout if and only if $b^{(n)}=b^{(n)}_0\cdots b^{(n)}_{\ell-1}$, $a_{n+\ell-1}=b^{(n)}_{\ell-1}-1$ and $\val_{\B^{(n+\ell)}}(\sigma^{n+\ell}(a))=1$. The conclusion follows.
\end{proof}

The following theorem generalizes Parry's theorem for real bases \cite{Parry:1960}.

\begin{theorem}
\label{thm:Parry}
An infinite word $a$ over $\N$ 
belongs to $D_{\B}$ if and only if for all $n\in\N$, $\sigma^n(a)<_{\lex} \DBi{n}^*(1)$.
\end{theorem}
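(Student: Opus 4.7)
The plan is to observe that the theorem follows directly from Corollary~\ref{cor:ShiftedGreedy} (which reduces $\B$-admissibility to the condition $\val_{\B^{(n)}}(\sigma^n(a))<1$ for all $n$), combined with Proposition~\ref{prop:qDB1LexGreatest} for the forward direction and Lemma~\ref{lem:Parry2} for the backward direction.

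For the forward direction, suppose $a\in D_{\B}$. By Corollary~\ref{cor:ShiftedGreedy}, we have $\val_{\B^{(n)}}(\sigma^n(a))<1$ for every $n\in\N$. Since $\sigma^n(a)$ is an infinite word over $\N$, applying the first part of Proposition~\ref{prop:qDB1LexGreatest} to the Cantor base $\B^{(n)}$ yields $\sigma^n(a)<_{\lex} \qDBi{n}(1)$, which is the desired conclusion.

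For the backward direction, suppose that $\sigma^n(a)<_{\lex}\qDBi{n}(1)$ for all $n\in\N$. For each $n$, set $b^{(n)}=\qDBi{n}(1)$, which is a $\B^{(n)}$-representation of $1$ by Proposition~\ref{prop:QuasiGreedyRep1}. The hypotheses of Lemma~\ref{lem:Parry2} are then exactly fulfilled, so for every $n\in\N$ either $\val_{\B^{(n)}}(\sigma^n(a))<1$, or the exceptional situation of Lemma~\ref{lem:Parry2} occurs. But the exceptional case requires that $b^{(n)}=\qDBi{n}(1)$ be finite, whereas by construction~\eqref{def:quasigreedy} the quasi-greedy expansion $\qDBi{n}(1)$ is always infinite. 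Hence $\val_{\B^{(n)}}(\sigma^n(a))<1$ for all $n\in\N$, and Corollary~\ref{cor:ShiftedGreedy} gives $a\in D_{\B}$.

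There is no real obstacle: the substantive work has already been packaged into Lemmas~\ref{lem:Parry1} and~\ref{lem:Parry2}, whose statements were clearly tailored to this application. The only subtlety worth double-checking is that $\qDBi{n}(1)$ is genuinely always infinite (so the exception in Lemma~\ref{lem:Parry2} is vacuous); this is immediate from the recursive definition~\eqref{def:quasigreedy}, which either returns the already-infinite $\DBi{n}(1)$ or recurses on $\DBi{n+\ell}^{*}(1)$ after emitting finitely many digits, so no branch can produce a finite word.
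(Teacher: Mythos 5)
Your proposal is correct and follows essentially the same route as the paper's own proof: reduce via Corollary~\ref{cor:ShiftedGreedy} to the equivalence of $\val_{\B^{(n)}}(\sigma^n(a))<1$ with $\sigma^n(a)<_{\lex}\qDBi{n}(1)$, use Proposition~\ref{prop:qDB1LexGreatest} for one implication and Proposition~\ref{prop:QuasiGreedyRep1} together with Lemma~\ref{lem:Parry2} for the other, noting that the exceptional case of Lemma~\ref{lem:Parry2} is ruled out because quasi-greedy expansions are always infinite. Your extra remark checking that no branch of the recursion~\eqref{def:quasigreedy} can produce a finite word is exactly the point the paper also relies on.
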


\begin{proof}
In view of Corollary~\ref{cor:ShiftedGreedy}, it suffices to show that the following two assertions are equivalent.
\begin{enumerate}
\item For all $n\in\N$, $\val_{\B^{(n)}}(\sigma^n(a))<1$. 
\item For all $n\in\N$, $\sigma^n(a)<_{\lex} \qDBi{n}(1)$.
\end{enumerate}
The fact that (1) implies (2) follows from Proposition~\ref{prop:qDB1LexGreatest}. Since any quasi-greedy expansion of $1$ is infinite, we obtain that (2) implies (1) by Proposition~\ref{prop:QuasiGreedyRep1} and Lemma~\ref{lem:Parry2}. 
\end{proof}

\begin{example}
Let $\B=(\overline{3,\varphi,\varphi})$ be the alternate base already studied in Examples~\ref{ex:3phiphi} and~\ref{ex2:3phiphi}. Then $a=210(110)^\omega$ is the $\B$-expansion of some $x\in(0,1)$. In fact, since $\qDBi{0}(1)=(210)^\omega$, $\qDBi{1}(1)=(102)^\omega$ and $\qDBi{2}(1)=1(110)^\omega$, by Theorem~\ref{thm:Parry}, there exists $x\in[0,1)$ such that $a=\DB(x)$. We can compute that $a=d_{\B}(\val_{\B}(a))= d_{\B}\big(\frac{19 + 9 \sqrt{5}}{3 (7 + 3 \sqrt{5})}\big)$.
\end{example}

We obtain a corollary characterizing the $\B$-expansions of a real number $x$ in the interval $[0,1]$ among all its $\B$-representations. 

\begin{corollary}
\label{cor:Parry1}
A $\B$-representation $a$ of some real number $x\in[0,1]$ is its $\B$-expansion if and only if for all $n\in\N_{\ge 1}$, $\sigma^n(a)<_{\lex} \qDBi{n}(1)$.
\end{corollary}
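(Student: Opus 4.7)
The plan is to derive this as a direct consequence of Theorem~\ref{thm:Parry} combined with the characterization given in Proposition~\ref{prop:ShiftedGreedy}. The key observation is that the condition in the corollary only quantifies over $n\in\N_{\ge 1}$, so it is really a statement about $\sigma(a)$ rather than $a$ itself; and Proposition~\ref{prop:ShiftedGreedy} already reduces greediness of $a$ to membership of $\sigma(a)$ in $D_{\B^{(1)}}$.

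For the forward direction, I would assume $a=\DB(x)$. Proposition~\ref{prop:ShiftedGreedy}, in the form $(1)\Rightarrow (4)$, gives $\sigma^n(a)\in D_{\B^{(n)}}$ for every $n\in\N_{\ge 1}$. Applying Theorem~\ref{thm:Parry} to the Cantor base $\B^{(n)}$ and the infinite word $\sigma^n(a)$, and reading the conclusion at the initial index $m=0$, I obtain $\sigma^n(a)<_{\lex}\qDBi{n}(1)$, which is exactly the required inequality.

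For the converse, I would assume that $\sigma^n(a)<_{\lex}\qDBi{n}(1)$ for every $n\in\N_{\ge 1}$. Reindexing via $n=m+1$, this says that for every $m\in\N$,
\[
	\sigma^m\bigl(\sigma(a)\bigr)<_{\lex} d^*_{(\B^{(1)})^{(m)}}(1).
\]
Now Theorem~\ref{thm:Parry}, applied to the Cantor base $\B^{(1)}$ and the infinite word $\sigma(a)$, yields $\sigma(a)\in D_{\B^{(1)}}$. Since $a$ is by hypothesis a $\B$-representation of some $x\in[0,1]$, Proposition~\ref{prop:ShiftedGreedy} in the form $(3)\Rightarrow (1)$ lets me conclude that $a=\DB(x)$.

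There is no real obstacle here: the whole work has already been done in Theorem~\ref{thm:Parry} and Proposition~\ref{prop:ShiftedGreedy}. The only tiny thing to be careful about is the shift of indices, namely that the condition starts at $n=1$ (and not $n=0$) precisely because Theorem~\ref{thm:Parry} gets applied to $\B^{(1)}$ rather than to $\B$ itself; indeed $\sigma^0(a)=a$ need not be strictly smaller than $\qDB(1)$, as $a$ may well equal $\DB(1)$.
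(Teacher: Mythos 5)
Your proof is correct and follows essentially the same route as the paper: the paper likewise applies Theorem~\ref{thm:Parry} to the shifted base $\B^{(1)}$ and the word $\sigma(a)$, then invokes Proposition~\ref{prop:ShiftedGreedy} to translate membership of $\sigma(a)$ in $D_{\B^{(1)}}$ into greediness of $a$. Your forward direction via $(1)\Rightarrow(4)$ and Parry's theorem applied to each $\B^{(n)}$ is a harmless variant of the paper's single application of the equivalence to $\B^{(1)}$.
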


\begin{proof}
Let $a\in\N^\N$ be such that $\val_{\B}(a)\in[0,1]$. From Theorem~\ref{thm:Parry}, $\sigma(a)$ belongs to $D_{\B^{(1)}}$ if and only if for all $n\in\N_{\ge 1}$, $\sigma^n(a)<_{\lex} \qDBi{n}(1)$. The conclusion then follows from Proposition~\ref{prop:ShiftedGreedy}.
\end{proof}

\begin{example}
Consider $\B=\big(\overline{\frac{16+5\sqrt{10}}{9},9}\big)$. Then $\DB(1)=\qDB(1)=34(27)^\omega$, $\DBi{1}(1)=90^\omega$ and $\qDBi{1}(1)=834(27)^\omega$. For all $m\in \N_{\ge 1}$, we have $\sigma^{2m}(34(27)^\omega)<_{\lex}\qDB(1)$ and $\sigma^{2m-1}(34(27)^\omega)<_{\lex} \qDBi{1}(1)$ as prescribed by Corollary~\ref{cor:Parry1}.
\end{example}

In comparison with the $\beta$-expansion theory, considering a Cantor base $\B$ and an infinite word $a$ over $\N$, Corollary~\ref{cor:Parry1} does not give a purely combinatorial condition to check whether $a$ is the $\B$-expansion of $1$. We will see in Section~\ref{sec:AlternateBases} that even though an improvement of this result in the context of alternate bases can be proved, a purely combinatorial condition cannot exist.

\section{The $\B$-shift}
\label{sec:BShift}

Let $S_{\B}$ denote the topological closure of $D_{\B}$ with respect to the prefix distance of infinite words: $S_{\B}=\overline{D_{\B}}$.

\begin{proposition}
\label{prop:ParryDS}
An infinite word $a$ over $\N$ 
belongs to $S_{\B}$ if and only if for all $n\in\N$, $\sigma^n(a)\le_{\lex} \DBi{n}^*(1)$.
\end{proposition}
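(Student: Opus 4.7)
The plan is to prove both implications of the equivalence separately, using Theorem~\ref{thm:Parry} as the characterization of $D_{\B}$ (namely: $b \in D_{\B}$ iff $\sigma^n(b) <_{\lex} \qDBi{n}(1)$ for all $n$) together with the fact that quasi-greedy expansions never end in $0^\omega$ and always begin with a positive digit.

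For the forward direction, I would assume $a \in S_{\B}$ and pick a sequence $(a^{(k)})_{k\in\N}$ in $D_{\B}$ converging to $a$ in the prefix distance. By Theorem~\ref{thm:Parry}, for each $k$ and each $n\in\N$ we have $\sigma^n(a^{(k)}) <_{\lex} \qDBi{n}(1)$. Suppose for contradiction that $\sigma^n(a) >_{\lex} \qDBi{n}(1)$ for some $n$, and let $\ell$ be the first position where they differ. Convergence gives some $K$ such that for all $k \ge K$, $a^{(k)}$ agrees with $a$ on the first $n+\ell+1$ letters. Then $\sigma^n(a^{(k)})$ also differs from $\qDBi{n}(1)$ first at position $\ell$, with the same larger digit as $\sigma^n(a)$, so $\sigma^n(a^{(k)}) >_{\lex} \qDBi{n}(1)$, contradicting Theorem~\ref{thm:Parry}. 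Hence $\sigma^n(a) \le_{\lex} \qDBi{n}(1)$ for every $n$.

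For the backward direction, I would assume $\sigma^n(a) \le_{\lex} \qDBi{n}(1)$ for all $n\in\N$ and produce, for each $N\in\N$, an element $a^{(N)} \in D_{\B}$ agreeing with $a$ on the first $N$ letters, from which $a\in S_{\B}$ follows by definition of the closure. The natural candidate is $a^{(N)} = a_0 \cdots a_{N-1} 0^\omega$. By Theorem~\ref{thm:Parry} it suffices to check $\sigma^n(a^{(N)}) <_{\lex} \qDBi{n}(1)$ for every $n\in\N$. For $n \ge N$ the shift equals $0^\omega$, which is strictly less than $\qDBi{n}(1)$ since the latter starts with $\floor{\beta_n} \ge 1$. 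For $n < N$, write $\sigma^n(a^{(N)}) = a_n\cdots a_{N-1} 0^\omega$; either the first $N-n$ letters of $\sigma^n(a)$ already differ from those of $\qDBi{n}(1)$, in which case the hypothesis $\sigma^n(a) \le_{\lex} \qDBi{n}(1)$ forces the same strict inequality for $\sigma^n(a^{(N)})$, or $a_n\cdots a_{N-1}$ is a common prefix, and then $\qDBi{n}(1)$ beats $\sigma^n(a^{(N)})$ lexicographically because $\qDBi{n}(1)$ is not ultimately zero while $\sigma^n(a^{(N)})$ is.

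The main obstacle is this last case of the backward direction: the hypothesis gives only $\le_{\lex}$, while Theorem~\ref{thm:Parry} demands a strict inequality, and the gap is bridged precisely by the defining property of the quasi-greedy expansion, namely that $\qDBi{n}(1)$ contains infinitely many nonzero digits. All other steps are routine: the forward direction is a standard limit argument against the open condition $>_{\lex}$, and the convergence $a^{(N)} \to a$ is immediate from the definition of the prefix distance.
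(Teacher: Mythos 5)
Your proposal is correct and follows essentially the same route as the paper: the forward direction is the same limit argument against the strict inequalities supplied by Theorem~\ref{thm:Parry}, and the backward direction uses the same truncations $a_0\cdots a_{N-1}0^\omega$, upgrading $\le_{\lex}$ to $<_{\lex}$ via the fact that $\qDBi{n}(1)$ is infinite. (One cosmetic slip: the first digit of $\qDBi{n}(1)$ is $\floor{\beta_n}-1$ rather than $\floor{\beta_n}$ when $\beta_n$ is an integer, but it is still positive, so the case $n\ge N$ goes through unchanged.)
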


\begin{proof}
Suppose that $a\in S_{\B}$. Then there exists a sequence $(a^{(k)})_{k\in\N}$ of $D_{\B}$ converging to $a$. By Theorem~\ref{thm:Parry}, for all $k,n\in\N$, we have $\sigma^n(a^{(k)})<_{\lex} \qDBi{n}(1)$. By letting $k$ tend to infinity, we get that for all $n\in\N$, $\sigma^n(a)\le_{\lex} \qDBi{n}(1)$. 

Conversely, suppose that for all $n\in\N$, $\sigma^n(a)\le_{\lex} \qDBi{n}(1)$. For each $k\in \N$, let $a^{(k)}=a_0\cdots a_k0^\omega$. Then $\lim\limits_{k\to+\infty}a^{(k)}=a$ and for all $k,n\in\N$, $\sigma^n(a^{(k)})\le_{\lex} \sigma^n(a)\le_{\lex} \qDBi{n}(1)$. Since $\qDBi{n}(1)$ is infinite, for all $k,n\in\N$, $\sigma^n(a^{(k)})<_{\lex} \qDBi{n}(1)$. By Theorem~\ref{thm:Parry}, we deduce that for all $k\in\N$, $a^{(k)}\in D_{\B}$. Therefore $a\in S_{\B}$.
\end{proof}

\begin{proposition}
Let $a,b \in S_{\B}$.
\begin{enumerate}
\item If $a<_{\lex} b$ then $\val_{\B}(a)\leq\val_{\B}(b)$.
\item If $\val_{\B}(a)<\val_{\B}(b)$ then $a<_{\lex} b$.
\end{enumerate}
\end{proposition}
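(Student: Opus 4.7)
The plan is to exploit the telescoping identity
\[
	\val_{\B}(b)-\val_{\B}(a)=\frac{1}{\prod_{i=0}^{\ell}\beta_i}\Bigl[(b_\ell-a_\ell)+\val_{\B^{(\ell+1)}}(\sigma^{\ell+1}(b))-\val_{\B^{(\ell+1)}}(\sigma^{\ell+1}(a))\Bigr]
\]
at the first index $\ell$ where the infinite words $a$ and $b$ disagree, combined with the observation that every $c\in S_{\B}$ satisfies $\val_{\B^{(n)}}(\sigma^n(c))\le 1$ for all $n\in\N$.

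First I would establish this last bound. By Proposition~\ref{prop:ParryDS}, $\sigma^n(c)\le_{\lex}\qDBi{n}(1)$ for every $n\in\N$. If this is an equality, then $\val_{\B^{(n)}}(\sigma^n(c))=1$ by Proposition~\ref{prop:QuasiGreedyRep1}. Otherwise the inequality is strict, and I apply implication~\eqref{eq:lemParry2} of Lemma~\ref{lem:Parry1} with the choice $b^{(m)}=\qDBi{m}(1)$ for each $m\in\N$ (whose hypothesis is exactly the conclusion of Proposition~\ref{prop:ParryDS}); this yields $\val_{\B^{(n)}}(\sigma^n(c))\le\val_{\B^{(n)}}(\qDBi{n}(1))=1$. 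Hence $\val_{\B^{(n)}}(\sigma^n(c))\le 1$ in both cases.

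For part~(1), I fix $\ell\in\N$ such that $a_0\cdots a_{\ell-1}=b_0\cdots b_{\ell-1}$ and $a_\ell<b_\ell$, so that $b_\ell-a_\ell\ge 1$. Substituting the bounds $\val_{\B^{(\ell+1)}}(\sigma^{\ell+1}(a))\le 1$ (from the preliminary step, applied to $c=a$) and $\val_{\B^{(\ell+1)}}(\sigma^{\ell+1}(b))\ge 0$ into the bracket of the identity above yields a quantity at least $1+0-1=0$, so $\val_{\B}(a)\le\val_{\B}(b)$. For part~(2), I would argue by contrapositive using (1) together with the trichotomy of the lexicographic order: if $b\le_{\lex}a$, then either $a=b$ (so $\val_{\B}(a)=\val_{\B}(b)$) or $b<_{\lex}a$ (so $\val_{\B}(b)\le\val_{\B}(a)$ by (1)), and either conclusion contradicts $\val_{\B}(a)<\val_{\B}(b)$.

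No step is expected to pose a serious obstacle; the only subtlety is that elements of $S_{\B}$ may literally equal some $\qDBi{n}(1)$ after a shift, so the equality case must be isolated in the preliminary bound before invoking Lemma~\ref{lem:Parry1}, which requires a strict lex inequality.
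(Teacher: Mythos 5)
Your argument is correct, but it is genuinely different from the paper's. The paper proves part (1) by a density-and-continuity argument: it picks sequences $(a^{(k)})$, $(b^{(k)})$ in $D_{\B}$ converging to $a$ and $b$, notes that for $k$ large enough $a^{(k)}<_{\lex}b^{(k)}$ (since the approximants eventually share the prefix up to the first disagreement of $a$ and $b$), applies the strict monotonicity of $\DB$ on $[0,1)$ (Proposition~\ref{pro:Increasing}) to get $\val_{\B}(a^{(k)})<\val_{\B}(b^{(k)})$, and passes to the limit using continuity of $\val_{\B}$, which degrades the strict inequality to $\le$. You instead work directly on $a,b\in S_{\B}$: you isolate the first disagreement index, write the difference of values via the telescoping identity, and control the bracket using the uniform bound $\val_{\B^{(n)}}(\sigma^n(c))\le 1$ for $c\in S_{\B}$, which you correctly extract from Proposition~\ref{prop:ParryDS} combined with Lemma~\ref{lem:Parry1} (taking $b^{(m)}=\qDBi{m}(1)$, and rightly separating the equality case $\sigma^n(c)=\qDBi{n}(1)$, where the lemma's strict hypothesis fails but Proposition~\ref{prop:QuasiGreedyRep1} gives the value directly). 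What the paper's route buys is brevity, reusing the already-proved strict monotonicity on $D_{\B}$; what your route buys is that it sidesteps the continuity of $\val_{\B}$ on $S_{\B}$ (which the paper asserts without comment and which, for a general Cantor base with possibly unbounded digits, itself rests on the kind of tail bound you establish explicitly), and it makes the mechanism behind the non-strict inequality transparent: the bracket can vanish exactly when $b_\ell=a_\ell+1$, the tail of $a$ attains value $1$ and the tail of $b$ attains value $0$. Both parts (2) are handled identically, by contraposition from (1).
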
 

\begin{proof}
Consider two sequences $(a^{(k)})_{k\in\N}$ and $(b^{(k)})_{k\in\N}$ of $D_{\B}$ such that $\lim_{k\to \infty} a^{(k)}=a$ and $\lim_{k\to \infty} b^{(k)}=b$. Suppose that $a<_{\lex} b$. Then there exists $\ell\in\N_{\ge 1}$ such that $a_0\cdots a_{\ell-1}= b_0 \cdots b_{\ell-1}$ and $a_\ell<b_\ell$. By definition of the prefix distance, there exists $K\in\N$ such that for all $k\ge K$, $a_0^{(k)}\cdots a_{\ell}^{(k)}=a_0\cdots a_{\ell}$ and $b_0^{(k)}\cdots b_{\ell}^{(k)}=b_0\cdots b_{\ell}$. Therefore, for all $k\ge K$, we have $a^{(k)}<_{\lex} b^{(k)}$, and then by Proposition~\ref{pro:Increasing}, $\val_{\B}(a^{(k)})<\val_{\B}(b^{(k)})$. Since the function $\val_{\B}$ is continuous, by letting $k$ tend to infinity, we obtain $\val_{\B}(a)\leq \val_{\B}(b)$. This proves the first item. The second item follows immediately.
\end{proof}

Further, we define 
\[
	\Delta_{\B}=\bigcup_{n\in\N}D_{\B^{(n)}}
	\quad\text{and}\quad
	 \Sigma_{\B}=\overline{\Delta_{\B}}.
\] 

\begin{proposition}
\label{prop:subshift}
The sets $\Delta_{\B}$ and $\Sigma_{\B}$ are both shift-invariant. 
\end{proposition}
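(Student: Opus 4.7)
The plan is to verify the two invariance claims in order, with the first doing the real work and the second being a soft topological consequence. For $\Delta_{\B}$, I will take an arbitrary $a\in\Delta_{\B}$, so by definition $a\in D_{\B^{(n)}}$ for some $n\in\N$, meaning $a=d_{\B^{(n)}}(x)$ for some $x\in[0,1)$. Applying Proposition~\ref{prop:ShiftedGreedy} to the Cantor base $\B^{(n)}$ (which is again a Cantor base, as the tail product is still infinite), we get $\sigma(a)\in D_{(\B^{(n)})^{(1)}}=D_{\B^{(n+1)}}$. Since $\B^{(n+1)}$ also appears in the union defining $\Delta_{\B}$, we conclude $\sigma(a)\in\Delta_{\B}$. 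Thus $\sigma(\Delta_{\B})\subseteq\Delta_{\B}$.

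For $\Sigma_{\B}=\overline{\Delta_{\B}}$, the argument is purely topological: the shift operator $\sigma$ is continuous with respect to the prefix distance on infinite words (a standard, easily-checked fact). Given $a\in\Sigma_{\B}$, pick a sequence $(a^{(k)})_{k\in\N}$ in $\Delta_{\B}$ converging to $a$. Then $\sigma(a^{(k)})\to\sigma(a)$, and by the first part each $\sigma(a^{(k)})$ lies in $\Delta_{\B}$, so $\sigma(a)$ lies in $\overline{\Delta_{\B}}=\Sigma_{\B}$.

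I do not anticipate a real obstacle here; the only subtle point is making sure Proposition~\ref{prop:ShiftedGreedy} is being applied to the correct Cantor base. That proposition is stated for a fixed Cantor base $\B$, but its implication $(1)\Rightarrow(3)$ is what we need, and applying it with $\B$ replaced by $\B^{(n)}$ transfers $D_{\B^{(n)}}$ into $D_{\B^{(n+1)}}$ under $\sigma$, which is exactly the inclusion driving the proof.
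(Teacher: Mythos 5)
Your proof is correct and follows essentially the same route as the paper: the paper also reduces everything to the fact that $\sigma$ maps $D_{\B^{(n)}}$ into $D_{\B^{(n+1)}}$ (it cites Corollary~\ref{cor:ShiftedGreedy}, which is equivalent to the implication of Proposition~\ref{prop:ShiftedGreedy} you invoke) and then passes to the closure. Your explicit continuity argument for the closure step is a clean way of justifying what the paper dismisses as ``easily seen.''
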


\begin{proof}
Let $a$ be an infinite word over $\N$ and $n\in\N$.  It follows from Corollary~\ref{cor:ShiftedGreedy} that if $a\in D_{\B^{(n)}}$ then $\sigma(a)\in D_{\B^{(n+1)}}$. Then, it is easily seen that if $a\in S_{\B^{(n)}}$ then $\sigma(a)\in S_{\B^{(n+1)}}$. 
\end{proof}

Recall some definitions of symbolic dynamics. Let $A$ be a finite alphabet. A subset of $A^\N$ is a \emph{subshift} of $A^{\N}$ if it is shift-invariant and closed with respect to the topology induced by the prefix distance. In view of Proposition~\ref{prop:subshift}, the subset $\Sigma_{\B}$ of $A_{\B}^{\N}$ is a subshift, which we call the ${\B}$-\emph{shift}. For a subset $L$ of $A^\N$, we let $\Fac(L)$ (resp.\ $\Pref(L)$) denote the set of all finite factors (resp.\ prefixes) of all elements in $L$.

\begin{proposition}
\label{prop:Fac}
We have $\Fac(D_{\B})=\Fac(\Delta_{\B})=\Fac(\Sigma_{\B})$.
\end{proposition}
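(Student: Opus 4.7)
The trivial chain $D_{\B}\subseteq \Delta_{\B}\subseteq \Sigma_{\B}$ immediately gives the inclusions $\Fac(D_{\B})\subseteq \Fac(\Delta_{\B})\subseteq \Fac(\Sigma_{\B})$, so the plan is to prove the two reverse inclusions, and I will attack them in different ways.

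For $\Fac(\Sigma_{\B})\subseteq \Fac(\Delta_{\B})$, I would argue by approximation. Let $w=a_k\cdots a_{k+\ell-1}$ be a finite factor of some $a\in\Sigma_{\B}$. By definition of $\Sigma_{\B}=\overline{\Delta_{\B}}$, there is a sequence $(a^{(j)})_{j\in\N}$ in $\Delta_{\B}$ converging to $a$ in the prefix distance. Hence for all $j$ large enough, $a^{(j)}$ agrees with $a$ on the prefix of length $k+\ell$, so $w$ appears as a factor of $a^{(j)}\in\Delta_{\B}$.

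For the more substantial inclusion $\Fac(\Delta_{\B})\subseteq \Fac(D_{\B})$, the key observation is that any element of $D_{\B^{(n)}}$ can be ``lifted'' to an element of $D_{\B}$ by prepending $n$ zeros. More precisely, let $b\in D_{\B^{(n)}}$ with $b=d_{\B^{(n)}}(y)$ for some $y\in[0,1)$, and set
\[
	x=\frac{y}{\prod_{i=0}^{n-1}\beta_i}.
\]
Since each $\beta_i>1$ and $y<1$, we have $x\in[0,1)$. I claim $\DB(x)=0^n b$, from which it follows that every factor of $b$ is a factor of an element of $D_{\B}$. To verify the claim, one checks by induction that the greedy algorithm yields $\varepsilon_k(x)=0$ and $r_k(x)=y/\prod_{i=k+1}^{n-1}\beta_i$ for $k\in[\![0,n-1]\!]$, so that $r_{n-1}(x)=y$. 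Then the earlier identity
\[
	\sigma^n\circ\DB(x)=d_{\B^{(n)}}\circ T_{\beta_{n-1}}\circ\cdots\circ T_{\beta_0}(x)=d_{\B^{(n)}}(y)=b
\]
forces $\DB(x)=0^n b$, as required.

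Combining the three inclusions (the trivial chain, the approximation argument, and the lifting construction) yields the desired chain of equalities. The main conceptual step is the lifting, but once the explicit choice of $x$ is made, the computation is short; the remaining inclusions are routine.
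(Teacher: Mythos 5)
Your proof is correct and follows essentially the same route as the paper: the trivial inclusions plus the closure argument handle two of the three containments, and the key step in both cases is lifting $b\in D_{\B^{(n)}}$ to $0^n b\in D_{\B}$. The only cosmetic difference is that the paper justifies this lifting by invoking Corollary~\ref{cor:ShiftedGreedy}, whereas you verify it by running the greedy algorithm directly on $x=y/\prod_{i=0}^{n-1}\beta_i$; both verifications are valid.
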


\begin{proof}
By definition, $\Fac(D_{\B})\subseteq\Fac(\Delta_{\B})=\Fac(\Sigma_{\B})$. Let us show that $\Fac(D_{\B})\supseteq\Fac(\Delta_{\B})$. Let $f\in \Fac(\Delta_{\B})$. There exist $n\in\N$ and $a\in D_{\B^{(n)}}$ such that $f\in\Fac(a)$. It follows from Corollary~\ref{cor:ShiftedGreedy} that $0^na$ belongs to $D_{\B}$. Therefore, $f\in\Fac(D_{\B})$. 
\end{proof}

We define sets of finite words $X_{\B,\ell}$ for $\ell\in\N_{\ge 1}$ as follows. If $\qDB(1)=t_0t_1\cdots$ then we let
\[
	X_{\B,\ell}=\{t_0\cdots t_{\ell-2}s \colon s\in[\![0,t_{\ell-1}-1]\!]\}.
\]
Note that $X_{\B,\ell}$ is empty if and only if $t_{\ell-1}=0$.

\begin{proposition}
\label{prop:DX} 
We have
\[
	D_{\B}=\bigcup_{\ell_0\in\N_{\ge 1}} X_{\B,\ell_0}
		\Bigg(\bigcup_{\ell_1\in\N_{\ge 1}} X_{\B^{(\ell_0)},\ell_1}
		\Bigg(\bigcup_{\ell_2\in\N_{\ge 1}} X_{\B^{(\ell_0+\ell_1)},\ell_2}
		\Bigg(
		\quad\cdots\quad
		\Bigg)
		\Bigg)
		\Bigg).
\] 
\end{proposition}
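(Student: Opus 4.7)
The plan is to prove both inclusions by iterating Theorem~\ref{thm:Parry}, with the following auxiliary lemma as the key input: \emph{for every Cantor base $\B$ and every $n\in\N$, $\val_{\B^{(n)}}(\sigma^n(\qDB(1)))\leq 1$, and hence $\sigma^n(\qDB(1))\leq_{\lex}\qDBi{n}(1)$ by Proposition~\ref{prop:qDB1LexGreatest} applied in the base $\B^{(n)}$.} To prove the inequality on values, I would induct on $n$, unwinding the recursive definition of $\qDB(1)$: the case $n=0$ is Proposition~\ref{prop:QuasiGreedyRep1}; if $\DB(1)$ is infinite then $\qDB(1)=\DB(1)$ is greedy and $\val_{\B^{(n)}}(\sigma^n(\DB(1)))=r_{n-1}(1)<1$; if instead $\DB(1)=\varepsilon_0\cdots\varepsilon_{L-1}$ is finite with $\varepsilon_{L-1}>0$, then for $1\leq n<L$ the tail $\qDBi{L}(1)$ exactly restores the greedy value via $\val_{\B^{(L)}}(\qDBi{L}(1))=1$, giving $\val_{\B^{(n)}}(\sigma^n(\qDB(1)))=r_{n-1}(1)<1$; for $n=L$ the value is exactly $1$; and for $n>L$ the identity $\sigma^n(\qDB(1))=\sigma^{n-L}(\qDBi{L}(1))$ reduces the claim, in the base $\B^{(L)}$ and at the strictly smaller index $n-L$, to the induction hypothesis.

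For the forward inclusion, let $a\in D_{\B}$ and write $\qDB(1)=t_0t_1\cdots$. By Theorem~\ref{thm:Parry}, $a<_{\lex}\qDB(1)$, so there is a smallest $\ell_0\in\N_{\geq 1}$ with $a_0\cdots a_{\ell_0-2}=t_0\cdots t_{\ell_0-2}$ and $a_{\ell_0-1}<t_{\ell_0-1}$; in particular $t_{\ell_0-1}>0$ and $a_0\cdots a_{\ell_0-1}\in X_{\B,\ell_0}$. By Proposition~\ref{prop:ShiftedGreedy}(4), $\sigma^{\ell_0}(a)\in D_{\B^{(\ell_0)}}$, and applying the same procedure to $\sigma^{\ell_0}(a)$ in the base $\B^{(\ell_0)}$ produces $\ell_1\in\N_{\geq 1}$ and the next block $w_1\in X_{\B^{(\ell_0)},\ell_1}$. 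Iterating yields $a=w_0w_1w_2\cdots$ with $w_k\in X_{\B^{(s_k)},\ell_k}$, where $s_k=\ell_0+\cdots+\ell_{k-1}$.

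For the reverse inclusion, suppose $a=w_0w_1w_2\cdots$ with $w_k\in X_{\B^{(s_k)},\ell_k}$. By Theorem~\ref{thm:Parry} it suffices to check that $\sigma^n(a)<_{\lex}\qDBi{n}(1)$ for every $n\in\N$. Writing $n=s_k+j$ with $0\leq j<\ell_k$ and $\qDBi{s_k}(1)=t^{(s_k)}_0t^{(s_k)}_1\cdots$, the word $\sigma^n(a)$ begins with the length-$(\ell_k-j)$ suffix $t^{(s_k)}_j\cdots t^{(s_k)}_{\ell_k-2}s$ of $w_k$, where $s<t^{(s_k)}_{\ell_k-1}$, whence $\sigma^n(a)<_{\lex}\sigma^j(\qDBi{s_k}(1))$. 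The auxiliary lemma applied to the base $\B^{(s_k)}$ then gives $\sigma^j(\qDBi{s_k}(1))\leq_{\lex}\qDBi{s_k+j}(1)=\qDBi{n}(1)$, which concludes the verification. The main obstacle is the auxiliary lemma itself: one must carefully trace the recursive construction of $\qDB(1)$ to verify that every intermediate ``remainder'' stays in $[0,1]$, attaining the value $1$ precisely at the starting positions of nested sub-blocks $\qDBi{L}(1)$.
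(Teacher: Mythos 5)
Your proof is correct, and its skeleton---both inclusions via Theorem~\ref{thm:Parry}, peeling off blocks of $X_{\B^{(s_k)},\ell_k}$ one at a time---is the same as the paper's. The genuine difference is the key input for the reverse inclusion. Both arguments reduce to the strict inequality $\sigma^n(a)<_{\lex}\sigma^{j}\big(\qDBi{s_k}(1)\big)$ where $n=s_k+j$ and $0\le j<\ell_k$; the paper then closes the case $j\ge 1$ by applying Corollary~\ref{cor:Parry1} to the \emph{greedy} expansion, i.e.\ $\sigma^{j}\big(\DBi{s_k}(1)\big)<_{\lex}\qDBi{n}(1)$, which leaves implicit the comparison between $\sigma^{j}\big(\qDBi{s_k}(1)\big)$ and $\sigma^{j}\big(\DBi{s_k}(1)\big)$---a point that is delicate precisely when $\DBi{s_k}(1)$ is finite of length $L\le j$, since then $\sigma^{j}\big(\DBi{s_k}(1)\big)=0^\omega$ while $\sigma^{j}\big(\qDBi{s_k}(1)\big)$ is not (and this case does occur, e.g.\ for $\B=\big(\overline{\tfrac{1+\sqrt{13}}{2},\tfrac{5+\sqrt{13}}{6}}\big)$ with $\ell_k=4$, $j=3$). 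You instead isolate the clean auxiliary statement $\val_{\B^{(n)}}\big(\sigma^n(\qDB(1))\big)\le 1$, hence $\sigma^n\big(\qDB(1)\big)\le_{\lex}\qDBi{n}(1)$ by Proposition~\ref{prop:qDB1LexGreatest}, proved by a well-founded induction on $n$ uniform over all Cantor bases (the reduction $n\mapsto n-L$ strictly decreases $n$ because $L\ge 1$, and the cancellation $-\tfrac{1}{\prod\beta_i}+\tfrac{\val_{\B^{(L)}}(\qDBi{L}(1))}{\prod\beta_i}=0$ restores the greedy remainders $r_{n-1}(1)<1$ for $1\le n<L$). This makes the transitivity step $\sigma^n(a)<_{\lex}\sigma^{j}\big(\qDBi{s_k}(1)\big)\le_{\lex}\qDBi{n}(1)$ airtight in every case, at the cost of one extra lemma; the lemma is also of independent interest since it says exactly that $\qDB(1)\in S_{\B}$ by Proposition~\ref{prop:ParryDS}. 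Your forward inclusion (via Theorem~\ref{thm:Parry} and Proposition~\ref{prop:ShiftedGreedy}) matches the paper's and is correct.
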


\begin{proof}
For the sake of conciseness, we let $X_{\B}$ denote the right-hand set of the equality. For $n\in\N$, write $\qDBi{n}(1)=t_0^{(n)}t_{1}^{(n)}\cdots$.

Let $a\in D_{\B}$. By Theorem~\ref{thm:Parry}, for all $n\in\N$, $\sigma^n(a)<\qDBi{n}(1)$. In particular, $a<\qDB(1)$. Thus, there exist $\ell_0\in\N_{\ge 1}$ such that $t_{\ell_0-1}^{(0)}>0$ and $s_0\in[\![0,t^{(0)}_{\ell_0-1}-1]\!]$ such that $a=t_0\cdots t_{\ell_0-2}s_0 \sigma^{\ell_0}(a)$. Next, we also have $\sigma^{\ell_0}(a)<\qDBi{\ell_0}(1)$. Then there exist $\ell_1\in\N_{\ge 1}$ such that $t_{\ell_1-1}^{(\ell_0)}>0$ and $s_1\in[\![0,t_{\ell_1-1}^{(\ell_0)}-1]\!]$ such that $\sigma^{\ell_0}(a)=t_0^{(\ell_0)}\cdots t_{\ell_1-2}^{(\ell_0)}s_1 \sigma^{\ell_0+\ell_1}(a)$.  We get that $a\in X_{\B}$ by iterating the process. 

Now, let $a\in X_{\B}$. Then there exists a sequence $(\ell_k)_{k\in\N}$ of $\N_{\ge 1}$ such that $a=u_0u_1u_2\cdots$ where for all $k\in\N$, $u_k \in X_{\B^{(\ell_0+\cdots \ell_{k-1})},\ell_k}$. By Theorem~\ref{thm:Parry}, in order to prove that $a\in D_{\B}$, it suffices to show that for all $n\in\N$, $\sigma^n(a)<_{\lex}\qDBi{n}(1)$. Let thus $n\in\N$. There exist $k\in\N$ and finite words $x$ and $y$ such that $u_k=xy$, $y\ne\varepsilon$ and $\sigma^n(a)=yu_{k+1}u_{k+2}\cdots$. Then $n=\ell_0+\cdots+\ell_{k-1}+|x|$ and $\sigma^n(a)<_{\lex} \sigma^{|x|}\big(\qDBi{\ell_0+\cdots \ell_{k-1}}(1)\big)$. If $x=\varepsilon$ then we obtain $\sigma^n(a)<_{\lex} \qDBi{\ell_0+\cdots \ell_{k-1}}(1)=\qDBi{n}(1)$. Otherwise it follows from Corollary~\ref{cor:Parry1} that $\sigma^{|x|}\big(\DBi{\ell_0+\cdots \ell_{k-1}}(1)\big)
	<_{\lex} \qDBi{\ell_0+\cdots \ell_{k-1}+|x|}(1)
	=\qDBi{n}(1)$, hence we get $\sigma^n(a)
	<_{\lex}\qDBi{n}(1)$ as well.
\end{proof}

\begin{corollary}
We have $D_{\B}=\displaystyle{\bigcup_{\ell\in\N_{\ge 1}} X_{\B,\ell} D_{\B^{(\ell)}}}$.
\end{corollary}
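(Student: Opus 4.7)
The plan is to derive this corollary as an immediate repackaging of Proposition~\ref{prop:DX}. First I would apply Proposition~\ref{prop:DX} to each shifted Cantor base $\B^{(\ell)}$ with $\ell\in\N_{\ge 1}$, which yields
$$D_{\B^{(\ell)}}=\bigcup_{\ell_1\in\N_{\ge 1}} X_{\B^{(\ell)},\ell_1}\Bigg(\bigcup_{\ell_2\in\N_{\ge 1}} X_{\B^{(\ell+\ell_1)},\ell_2}\Bigg(\cdots\Bigg)\Bigg).$$
The right-hand side here is exactly the inner nested union that follows the factor $X_{\B,\ell_0}$ in the formula of Proposition~\ref{prop:DX} for $D_{\B}$, once we set $\ell=\ell_0$. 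Substituting this identity into the formula of Proposition~\ref{prop:DX} and renaming the outer index $\ell_0$ as $\ell$ gives
$$D_{\B}=\bigcup_{\ell\in\N_{\ge 1}} X_{\B,\ell}\, D_{\B^{(\ell)}},$$
which is the claim.

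Since Proposition~\ref{prop:DX} already carries both inclusions of the nested identity, no further genuine work is required; there is no real obstacle here. The only point to be careful about is notational: the product $X_{\B,\ell}\, D_{\B^{(\ell)}}$ must be read as the set of concatenations $vb$ with $v\in X_{\B,\ell}$ a finite word and $b\in D_{\B^{(\ell)}}$ an infinite word. This is precisely the convention implicit in the statement of Proposition~\ref{prop:DX}, where the nested union denotes the set of infinite words of the form $u_0u_1u_2\cdots$ with $u_k\in X_{\B^{(\ell_0+\cdots+\ell_{k-1})},\ell_k}$ for some sequence $(\ell_k)_{k\in\N}$ of positive integers, so the telescoping of the iterated nested union through a single factor $X_{\B,\ell}$ is formally legitimate.
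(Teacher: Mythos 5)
Your proposal is correct and matches the paper's intent: the corollary is stated without proof as an immediate consequence of Proposition~\ref{prop:DX}, obtained exactly as you describe by recognizing the inner nested union as the formula for $D_{\B^{(\ell_0)}}$ and substituting. Nothing further is needed.
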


\begin{corollary}
\label{cor:DX}
Any prefix of $\qDB(1)$ belongs to $\Pref(D_{\B})$.
\end{corollary}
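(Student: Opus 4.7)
The plan is to apply the previous corollary, which gives the decomposition $D_{\B} = \bigcup_{\ell \in \N_{\ge 1}} X_{\B,\ell}\, D_{\B^{(\ell)}}$, and to exhibit, for each finite prefix of $\qDB(1)$, an explicit element of $D_{\B}$ that extends it. Writing $\qDB(1) = t_0 t_1 t_2 \cdots$ as in Proposition~\ref{prop:DX}, I will fix an arbitrary prefix $p = t_0 \cdots t_{\ell-1}$ of length $\ell \in \N$ and build a word $a \in D_{\B}$ starting with $p$.

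First, I would locate a nonzero digit of $\qDB(1)$ at or beyond position $\ell$. Since any quasi-greedy expansion is infinite by definition~\eqref{def:quasigreedy}, the word $\qDB(1)$ does not end in $0^\omega$, so the set of indices $n$ with $t_n > 0$ is infinite, and I can choose the smallest $m \ge \ell$ with $t_m > 0$. Then I would set $a = t_0 \cdots t_{m-1}(t_m - 1)\, 0^\omega$ and verify that $a \in D_{\B}$: the finite word $t_0 \cdots t_{m-1}(t_m - 1)$ lies in $X_{\B, m+1}$ because $t_m - 1 \in [\![0, t_m - 1]\!]$ thanks to $t_m > 0$, and $0^\omega = \DBi{m+1}(0) \in D_{\B^{(m+1)}}$. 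The previous corollary, applied with index $m+1$, then gives $a \in X_{\B,m+1}\, D_{\B^{(m+1)}} \subseteq D_{\B}$. Since $\ell \le m$, the prefix $p$ coincides with the first $\ell$ letters of $a$, proving $p \in \Pref(D_{\B})$.

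There is no serious obstacle in this argument; the only point worth stressing is the existence of $m$, which rests on the fact that any quasi-greedy expansion carries infinitely many nonzero letters (a direct consequence of the recursive definition of $\qDB(1)$). The case $\ell = 0$ requires no special treatment and merely yields the trivial statement that the empty word belongs to $\Pref(D_{\B})$.
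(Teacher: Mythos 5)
Your proof is correct and follows essentially the same route as the paper: locate the first nonzero digit of $\qDB(1)$ at or beyond the given prefix, decrement it to land in the appropriate set $X_{\B,k}$, and conclude via the decomposition of $D_{\B}$ from Proposition~\ref{prop:DX} (the paper invokes the nested decomposition directly, while you use its immediate corollary together with the explicit tail $0^\omega\in D_{\B^{(m+1)}}$, a negligible difference). Both arguments rest on the same key fact that $\qDB(1)$ is infinite, i.e.\ has infinitely many nonzero digits.
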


\begin{proof}
Write $\qDB(1)=t_0t_1t_2\cdots$ and let $\ell\in\N_{\ge 1}$. Since $\qDB(1)$ is infinite, there exists $k> \ell$ such that $t_{k-1}> 0$. Choose the least such $k$ and let $s\in[\![0,t_{k-1}-1]\!]$. Then $t_0\cdots t_{\ell-1}0^{k-\ell-1}s$ belongs to $X_{\B,k}$. The conclusion follows from Proposition~\ref{prop:DX}.
\end{proof}

\section{Alternate bases}
\label{sec:AlternateBases}

Recall that an alternate base is a periodic Cantor base. The aim of this section is to discuss some results that are specific to these particular Cantor bases. 

We start with a few elementary observations. First, the condition $\prod_{n\in\N}\beta_n=+\infty$ is trivially satisfied in the context of alternate bases since the sequence $(\beta_n)_{n\in\N}$ takes only finitely many values. Then, for an alternate base $\B$ of length $p$, the $\B$-value~\eqref{eq:representationCantor} of an infinite word $a$ over $\R_{\ge 0}$ can be rewritten as
\[
	\val_{\B}(a)
	=\sum_{n\in\N}
	\frac{a_n}{\big(\prod_{i=0}^{p-1}\beta_i\big)^{\lfloor\frac{n}{p}\rfloor} \prod_{i=0}^{n\bmod p}\beta_i}
\]
or as
\begin{equation}
	\label{eq:representationAlternate} 
	\val_{\B}(a)
	=\sum_{m=0}^{+\infty}
	\frac{1}{\big(\prod_{i=0}^{p-1}\beta_i\big)^m}
	\sum_{j=0}^{p-1} 
	\frac{a_{pm+j}}{\prod_{i=0}^{j}\beta_i}.
\end{equation}
Further, the alphabet $A_{\B}$ is finite since $A_{\B}=\{0,\ldots,\max_{i\in\Int}{\floor{\beta_i}}\}$.
Finally, note that a Cantor base of the form $(\beta,\beta,\ldots)$ is an alternate base of length $1$, in which case, as already mentioned in Section~\ref{sec:CantorRealBases}, all definitions introduced so far coincide with those of Rényi~\cite{Renyi:1957} for real bases $\beta$.

In Proposition~\ref{prop:ExistRepresentation}, we gave a characterization of those infinite words $a \in(\R_{\ge 0})^\N$ for which there exists a Cantor base $\B$ such that $\val_{\B}(a)=1$. Here, we are interested in the stronger condition of the existence of an alternate base $\B$ satisfying $\val_{\B}(a)=1$.

\begin{proposition}
\label{pro:ExistAlternateBase}
Let $a$ be an infinite word over $\R_{\ge 0}$ such that $a_n\in O(n^d)$ for some $d\in\N$ and let $p\in\N_{\ge 1}$. There exists an alternate base $\B$ of length $p$ such that $\val_{\B}(a)=1$ if and only if $\sum_{n\in\N}a_n> 1$. If moreover $p\ge 2$, then there exist uncountably many such alternate bases.
\end{proposition}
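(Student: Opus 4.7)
The plan for necessity is to reuse the argument from the first paragraph of the proof of Lemma~\ref{lem:ExistRepresentation1}, which works for any Cantor base (not just alternate ones): if $a=0^\omega$ then $\val_\B(a)=0$, and otherwise every denominator $\prod_{i=0}^n\beta_i$ is strictly greater than $1$ and hence $\val_\B(a)<\sum_n a_n$. Thus $\sum_n a_n\le 1$ would prevent $\val_\B(a)=1$.

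For sufficiency, the case $p=1$ is an immediate application of Lemma~\ref{lem:ExistRepresentation1}, so the remaining task is the case $p\ge 2$. The strategy is to reduce the problem to the same Lemma~\ref{lem:ExistRepresentation1} applied to a carefully chosen auxiliary sequence. We fix $(\beta_1,\ldots,\beta_{p-1})$ and regard $\beta_0$ as the real-base parameter to be determined. Writing $n=pm+j$ with $0\le j\le p-1$ and setting $B_1:=\prod_{i=1}^{p-1}\beta_i$, the identity $\prod_{i=0}^{pm+j}\beta_i=(\beta_0 B_1)^m\beta_0\prod_{i=1}^j\beta_i$ lets us reorganize formula~\eqref{eq:representationAlternate} as
\[
\val_\B(a)=\sum_{m=0}^{+\infty}\frac{c_m}{\beta_0^{m+1}}=\val_{\beta_0}(c), \qquad c_m:=\frac{1}{B_1^m}\sum_{j=0}^{p-1}\frac{a_{pm+j}}{\prod_{i=1}^j\beta_i},
\]
with the convention that the empty product equals $1$. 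Since $a_n\in O(n^d)$ and $B_1>1$, one immediately obtains $c_m\in O(m^d)$. Lemma~\ref{lem:ExistRepresentation1} will then produce a unique real base $\beta_0>1$ satisfying $\val_{\beta_0}(c)=1$, and hence $\val_\B(a)=1$, as soon as we can check that $\sum_m c_m>1$.

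The main obstacle is therefore to ensure $\sum_m c_m>1$ for uncountably many tuples $(\beta_1,\ldots,\beta_{p-1})$. The key observation is that each $c_m$ is continuous and (weakly) decreasing in each $\beta_i$, since all these parameters appear only in denominators. Applying the monotone convergence theorem as $(\beta_1,\ldots,\beta_{p-1})$ tends to $(1,\ldots,1)$ from above, we expect
\[
\lim_{\beta_1,\ldots,\beta_{p-1}\to 1^+}\sum_{m\in\N}c_m=\sum_{m\in\N}\sum_{j=0}^{p-1}a_{pm+j}=\sum_{n\in\N}a_n>1
\]
(where the right-hand side may be $+\infty$). This will furnish an $\varepsilon>0$ for which $\sum_m c_m>1$ holds throughout the open box $(1,1+\varepsilon)^{p-1}$, and each tuple in this box yields an alternate base $\B=(\overline{\beta_0,\beta_1,\ldots,\beta_{p-1}})$ with $\val_\B(a)=1$. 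Since distinct tuples produce distinct periodic sequences $(\beta_n)_{n\in\N}$, this delivers uncountably many alternate bases of length $p$. The delicate step is the limit exchange, but it is justified by monotone convergence thanks to the nonnegativity of all summands.
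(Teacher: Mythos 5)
Your proposal is correct and follows essentially the same route as the paper: fix $(\beta_1,\ldots,\beta_{p-1})$, regroup $\val_{\B}(a)$ as $\val_{\beta_0}(c)$ for the auxiliary word $c=(c_m)_{m\in\N}$ with $c_m\in O(m^d)$, and invoke Lemma~\ref{lem:ExistRepresentation1} to solve for $\beta_0$. The only (harmless) variation is how you secure $\sum_m c_m>1$ on an uncountable set of tuples: you use monotonicity in the $\beta_i$ plus monotone convergence as $(\beta_1,\ldots,\beta_{p-1})\to(1,\ldots,1)^+$ to get a box $(1,1+\varepsilon)^{p-1}$, whereas the paper bounds a finite truncation $\sum_{m=0}^{\lfloor N/p\rfloor}c_m$ explicitly and takes all tuples with $\bigl(\prod_{i=1}^{p-1}\beta_i\bigr)^{\lfloor N/p\rfloor+1}\le\sum_{n=0}^N a_n$; both arguments are valid.
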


\begin{proof} 
From Proposition~\ref{prop:ExistRepresentation}, we already know that the condition $\sum_{n\in\N}a_n> 1$ is necessary. Now, suppose that $\sum_{n\in\N}a_n> 1$.
If $p=1$ then the result follows from Lemma~\ref{lem:ExistRepresentation1}. Suppose that $p\ge 2$. Consider any $(p-1)$-tuple $(\beta_1,\ldots,\beta_{p-1})\in (\R_{>1})^{p-1}$. For all $\beta_0>1$, we can write $\val_{\B}(a)=\val_{\beta_0}(c)$ with $\B=(\overline{\beta_0,\beta_1,\ldots,\beta_{p-1}})$ and
\[
	c_m=\frac{1}{\big(\prod_{i=1}^{p-1}\beta_i\big)^m}
	\sum_{j=0}^{p-1} 
	\frac{a_{pm+j}}{\prod_{i=1}^j \beta_i} 
	\quad \text{for all } m\in\N.
\]
Note that $c\in(\R_{\ge 0})^\N$ and that $c_m\in O(m^d)$. By hypothesis, there exists $N\in\N$ such that 
$\sum_{n=0}^{N}a_n>1$. Then
\[
	\sum_{m=0}^{\floor{\frac{N}{p}}}c_m
	> \frac{\sum_{m=0}^{\floor{\frac{N}{p}}} 
 	\sum_{j=0}^{p-1} a_{pm+j}}{\big(\prod_{i=1}^{p-1}\beta_i\big)^{\floor{\frac{N}{p}}+1 }}
	 \ge \frac{\sum_{n=0}^N a_n}{\big(\prod_{i=1}^{p-1}\beta_i\big)^{\floor{\frac{N}{p}}+1 }}.
\]
Therefore, any $(p-1)$-tuple $(\beta_1,\ldots,\beta_{p-1})\in  (\R_{>1})^{p-1}$ satisfying 
\[
	\Bigg(\prod_{i=1}^{p-1}\beta_i\Bigg)^{\floor{\frac{N}{p}}+1}
	\le \;\sum_{n=0}^N a_n
\]
is such that $\sum_{m=0}^{\floor{\frac{N}{p}}}c_m>1$, and hence there exist uncountably many of them. For such a $(p-1)$-tuple, the infinite word $c$ satisfies the hypothesis of Lemma~\ref{lem:ExistRepresentation1}, so there exists $\beta_0>1$ such that $\val_{\B}(a)=\val_{\beta_0}(c)=1$.
\end{proof}

\subsection{The greedy algorithm}
The greedy and the quasi-greedy $\B$-expansions of $1$ enjoy specific properties whenever $\B$ is an alternate base. From now on, we let $\B$ be a fixed alternate base and we let $p$ be its length. 

\begin{proposition}
\label{pro:PurelyPeriodic}
The $\B$-expansion of $1$ is not purely periodic.
\end{proposition}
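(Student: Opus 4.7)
The plan is to proceed by contradiction, leveraging Proposition~\ref{prop:ShiftedGreedy} to conclude that every nontrivial shift of $d_{\B}(1)$ has $\B^{(n)}$-value strictly less than $1$, and then using the periodicity of the base $\B$ to turn one such shift back into $d_{\B}(1)$ itself, producing a contradiction with $\val_{\B}(d_{\B}(1)) = 1$.

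Concretely, I would assume that $d_{\B}(1)$ is purely periodic of some period $q \in \N_{\ge 1}$. The key trick is not to use $q$ directly (since $\B^{(q)}$ may differ from $\B$ when $p \nmid q$), but to pass to the common multiple $pq$. Then $\sigma^{pq}(d_{\B}(1)) = d_{\B}(1)$ by periodicity of the word, while $\B^{(pq)} = \B$ by periodicity of the base. Applying the implication $(1) \Rightarrow (2)$ of Proposition~\ref{prop:ShiftedGreedy} with $n = pq \ge 1$ to the greedy expansion $a = d_{\B}(1)$ gives $\val_{\B^{(pq)}}(\sigma^{pq}(d_{\B}(1))) < 1$, which collapses to $\val_{\B}(d_{\B}(1)) < 1$, contradicting $\val_{\B}(d_{\B}(1)) = 1$.

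There is no genuinely hard step here: the proof is essentially a one-line application of a previously established equivalence. The only subtlety worth flagging is the replacement of $q$ by a common multiple of $p$ and $q$ (any multiple of $p$ that is also a period of the word will do), so that the shifted base coincides with the original base; forgetting this manoeuvre is the natural pitfall and it is precisely what would obstruct an analogous blanket statement for general (non-alternate) Cantor bases.
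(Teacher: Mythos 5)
Your proof is correct and follows essentially the same route as the paper: both arguments hinge on the same key manoeuvre of replacing the period $q$ by a common multiple of $p$ and $q$ (the paper takes $\ell=\lcm(p,q)$, you take $pq$) so that $\B^{(\ell)}=\B$ while $\sigma^{\ell}(\DB(1))=\DB(1)$. The only difference is in how the contradiction is landed — the paper exhibits the $\B$-representation $\varepsilon_0\cdots\varepsilon_{\ell-2}(\varepsilon_{\ell-1}+1)$ of $1$, which is lexicographically greater than $\DB(1)$ and so contradicts Proposition~\ref{pro:GreedyLexGreatest}, whereas you invoke Proposition~\ref{prop:ShiftedGreedy} to get $\val_{\B}(\DB(1))<1$ directly; both are immediate consequences of the same greedy condition of Lemma~\ref{lem:Greedy}.
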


\begin{proof}
Suppose to the contrary that there exists $q\in\N_{\ge 1}$ such that for all $n\in\N$, $\varepsilon_n=\varepsilon_{n+q}$. By considering $\ell=\lcm(p,q)$, we get that $\B^{(\ell)}=\B$ and for all $n\in\N$, $\varepsilon_n=\varepsilon_{n+\ell}$. Therefore
\[
1	 =\val_{\B}\big(\varepsilon_0\cdots \varepsilon_{\ell-1}\big) 
		+ \frac{1}{\prod_{i=0}^{\ell-1}\beta_i}\\
	 =\val_{\B}\big(\varepsilon_0\cdots \varepsilon_{\ell-2} (\varepsilon_{\ell-1}+1)\big).
\]
Thus $\varepsilon_0\cdots \varepsilon_{\ell-2}(\varepsilon_{\ell-1}+1)$ is a $\B$-representation of $1$ lexicographically greater than $\DB(1)$, which is impossible by Proposition~\ref{pro:GreedyLexGreatest}.
\end{proof}

One might think at first that if for each $i\in\Int$, $\qDBi{i}(1)$ is ultimately periodic, then for $\beta=\prod_{i=0}^{p-1}\beta_i$, $d_\beta^{*}(1)$ must be ultimately periodic as well. This is not the case, as the following example shows. Moreover, the same example shows that the $\B$-expansion of $1$ can be ultimately periodic with a period which is coprime with the length $p$ of $\B$.

\begin{example}
Let $\B=(\overline{\sqrt{6},3,\frac{2+\sqrt{6}}{3}})$. It is easily checked that $\DBi{0}(1)=2(10)^\omega$, $\DBi{1}(1)=3$ and $\DBi{2}(1)=11002$. But the product $\beta=\prod_{i=0}^{p-1}\beta_i=\sqrt{6}(2+\sqrt{6})$ is such that $d_\beta^*(1)$ is not ultimately periodic as explained in Example~\ref{ex:PisotQuadratic}. 
\end{example}

\begin{proposition}
\label{pro:NeverReach}
The quasi-greedy expansion $\qDB(1)$ is ultimately periodic
if and only if either an ultimately periodic expansion is reached or only finite expansions are involved within the first $p$ recursive calls to the definition of $\qDB(1)$.
\end{proposition}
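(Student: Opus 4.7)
The plan is to track the recursive definition of $\qDB(1)$ as a walk through the finite set of distinct shifted bases $\{\B^{(0)},\ldots,\B^{(p-1)}\}$ and combine this with a pigeonhole argument. I will set $n_0=0$ and, whenever $\DBi{n_k}(1)$ is finite of length $\ell_k\ge 1$, define $n_{k+1}=n_k+\ell_k$; the recursion halts at step $k$ as soon as $\DBi{n_k}(1)$ turns out to be infinite. By periodicity of $\B$, the base $\B^{(n_k)}$ depends only on $n_k\bmod p$, and the definition~\eqref{def:quasigreedy} provides $\qDBi{n_k}(1)=u_k\,\qDBi{n_{k+1}}(1)$ where $u_k$ is a finite word of length $\ell_k$. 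Iterating yields $\qDBi{n_k}(1)=(u_k\cdots u_{k'-1})\,\qDBi{n_{k'}}(1)$ for any $k<k'$ for which the recursion has not terminated before step $k'$; this identity is the main workhorse for both directions.

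For the $(\Leftarrow)$ implication I would split into two cases. First, if some $\DBi{n_k}(1)$ with $k\le p-1$ is infinite and ultimately periodic, the recursion halts with $\qDBi{n_k}(1)=\DBi{n_k}(1)$, which is ultimately periodic, and $\qDB(1)$, obtained by prepending a finite word to $\qDBi{n_k}(1)$, is ultimately periodic. Otherwise, if $\DBi{n_k}(1)$ is finite for every $k\in\{0,\ldots,p-1\}$, the recursion produces the $p+1$ bases $\B^{(n_0)},\ldots,\B^{(n_p)}$. Since these all lie in a set of size $p$, pigeonhole yields $0\le k<k'\le p$ with $n_k\equiv n_{k'}\pmod p$, so $\qDBi{n_k}(1)=\qDBi{n_{k'}}(1)$. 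Combined with the displayed identity, this gives $\qDBi{n_k}(1)=w\,\qDBi{n_k}(1)$ for the nonempty word $w=u_k\cdots u_{k'-1}$, forcing $\qDBi{n_k}(1)$ to be purely periodic, so that $\qDB(1)$ is ultimately periodic.

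For the $(\Rightarrow)$ implication I would argue contrapositively: assuming that both conditions fail, some $\DBi{n_k}(1)$ with $k\le p-1$ must be infinite (otherwise only finite expansions are involved in the first $p$ calls), and for the smallest such $k$ this infinite expansion cannot be ultimately periodic (otherwise an ultimately periodic expansion is reached). The recursion then halts at step $k$ with $\qDB(1)$ equal to a finite prefix concatenated with $\DBi{n_k}(1)$, and a non-ultimately-periodic tail renders $\qDB(1)$ non-ultimately-periodic. The main subtlety will be getting the count right in the pigeonhole: after $p$ recursive calls that all encounter finite expansions, one has visited $p+1$ consecutive bases $\B^{(n_0)},\ldots,\B^{(n_p)}$, and it is the observation that equality of shifted bases $\B^{(m)}=\B^{(m')}$ automatically entails equality of the quasi-greedy expansions $\qDBi{m}(1)=\qDBi{m'}(1)$ that makes the repetition of a base immediately force periodicity of the whole recursion.
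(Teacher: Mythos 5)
Your proposal is correct and follows essentially the same route as the paper's proof: it splits on whether the recursion halts at an infinite expansion, and in the all-finite case applies the pigeonhole principle to the $p+1$ residues of $n_0,\ldots,n_p$ modulo $p$ to force an eventually periodic tail. The only cosmetic difference is that you deduce pure periodicity of the tail from the fixed-point identity $\qDBi{n_k}(1)=w\,\qDBi{n_k}(1)$ with $w$ nonempty, whereas the paper writes out the preperiod $x$ and period $y$ explicitly.
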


\begin{proof}
If there exists $n\in\N$ such that the infinite expansion $\qDBi{n}(1)$ is involved in the computation of $\qDB(1)$, then clearly $\qDB(1)$ is ultimately periodic if and only if so is $\qDBi{n}(1)$.

Now, suppose that only finite expansions are involved within $p$ recursive calls to the definition of $\qDB(1)$. Then $\DB(1)$ is finite. Thus, $\DB(1)=\varepsilon_{\B,0}\cdots \varepsilon_{\B,\ell_0-1}$ with $\ell_0\in\N_{\geq1}$ and $\varepsilon_{\B,\ell_0-1}> 0$. Then
\[
	\qDB(1)= \varepsilon_{\B,0}\cdots \varepsilon_{\B,\ell_0-2}
			(\varepsilon_{\B,\ell_0-1}-1)
			\qDBi{i_1}(1)
\] 
where $i_1= \ell_0\bmod p$. By hypothesis, $\DBi{i_1}(1)$ is finite as well. Thus we have $\DBi{i_1}(1)=\varepsilon_{\B^{(i_1)},0}\cdots \varepsilon_{\B^{(i_1)},\ell_1-1}$ with $\ell_1\in\N_{\ge 1}$ and $\varepsilon_{\B^{(i_1)},\ell_1-1}> 0$. Repeating the same argument, we obtain 
\[
	\qDBi{i_1}(1)= \varepsilon_{\B^{(i_1)},0}\cdots \varepsilon_{\B^{(i_1)},\ell_1-2}
					(\varepsilon_{\B^{(i_1)},\ell_1-1}-1)
					\qDBi{i_2}(1)
\] 
where $i_2= \ell_0+\ell_1 \bmod p$. By continuing in the same fashion and by setting $i_0=0$, we obtain two sequences $(\ell_j)_{j\in\Int}$ and $(i_j)_{j\in[\![0,p]\!]}$. Because for all $j\in[\![0,p]\!]$, we have $i_j\in\Int$, there exist $j,k\in[\![0,p]\!]$ such that $j<k$ and $i_j=i_k$. Then $\qDB(1)=xy^\omega$ where 
\[
	x=
	\varepsilon_{\B^{(i_0)},0}\cdots \varepsilon_{\B^{(i_0)},\ell_0-2}
			(\varepsilon_{\B^{(i_0)},\ell_0-1}-1)
	\ \cdots \
	\varepsilon_{\B^{(i_{j-1})},0}\cdots \varepsilon_{\B^{(i_{j-1})},\ell_{j-1}-2}
			(\varepsilon_{\B^{(i_{j-1})},\ell_{j-1}-1}-1)\\
\]
and
\[
	y=
	\varepsilon_{\B^{(i_j)},0}\cdots \varepsilon_{\B^{(i_j)},\ell_j-2}
			(\varepsilon_{\B^{(i_j)},\ell_j-1}-1)
	\ \cdots \
	\varepsilon_{\B^{(i_{k-1})},0}\cdots \varepsilon_{\B^{(i_{k-1})},\ell_{k-1}-2}
			(\varepsilon_{\B^{(i_{k-1})},\ell_{k-1}-1}-1).
\]
\end{proof}

\subsection{Admissible sequences}

The condition given in Corollary~\ref{cor:Parry1} does not allow us to check whether a given $\B$-representation of $1$ is the $\B$-expansion of $1$ without effectively computing the quasi-greedy $\B$-expansion of $1$, and hence the $\B$-expansion of $1$ itself. The following proposition provides us with such a condition in the case of alternate bases, provided that we are given the quasi-greedy $\B^{(i)}$-expansions of $1$ for $i\in[\![1,p-1]\!]$.

\begin{proposition}
\label{prop:Parry2}
A $\B$-representation $a$ of $1$ is the $\B$-expansion of $1$ if and only if for all $m\in\N_{\ge 1}$,
$\sigma^{pm}(a)<_{\lex} a$ and for all $m\in\N$ and $i\in[\![1,p-1]\!]$, $\sigma^{pm+i}(a)<_{\lex} \qDBi{i}(1)$.
\end{proposition}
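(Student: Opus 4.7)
My plan is to invoke Corollary~\ref{cor:Parry1}, which characterizes the $\B$-expansion of a real in $[0,1]$ as the unique $\B$-representation $a$ satisfying $\sigma^n(a)<_{\lex}\qDBi{n}(1)$ for every $n\in\N_{\ge 1}$. The hypothesis in Proposition~\ref{prop:Parry2} rearranges these countably many inequalities according to the residue of $n$ modulo the period $p$, using the identity $\B^{(pm+i)}=\B^{(i)}$ (hence $\qDBi{pm+i}(1)=\qDBi{i}(1)$) together with $\qDBi{pm}(1)=\qDB(1)$.

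For the forward direction, assume $a=\DB(1)$. The inequalities $\sigma^{pm+i}(a)<_{\lex}\qDBi{i}(1)$ for $1\le i\le p-1$ are immediate from Corollary~\ref{cor:Parry1}. For $\sigma^{pm}(a)<_{\lex} a$ with $m\ge 1$, I would use $\sigma^{pm}(\DB(1))=\DB(r_{pm-1}(1))$ (the first Proposition of Section~\ref{sec:GreedyAlgorithm}, specialized via $\B^{(pm)}=\B$) together with $r_{pm-1}(1)<1$ and Proposition~\ref{pro:Increasing}.

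For the converse, I assume both hypotheses and verify the conditions of Corollary~\ref{cor:Parry1}. The case $n=pm+i$ with $1\le i\le p-1$ is precisely condition (ii). For $n=pm$ with $m\ge 1$, condition (i) provides $\sigma^{pm}(a)<_{\lex}a$, so it suffices to establish $a\le_{\lex}\qDB(1)$. If $a=\DB(1)$, then the desired inequality $\sigma^{pm}(a)<_{\lex}\qDB(1)$ follows from the forward direction of Corollary~\ref{cor:Parry1}; otherwise I argue by contradiction, supposing $a>_{\lex}\qDB(1)$ with $a\ne\DB(1)$.

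The main obstacle is producing the contradiction. Since $a\le_{\lex}\DB(1)$ by Corollary~\ref{cor:DB1LexGreatest}, $\DB(1)$ cannot be infinite (otherwise $\qDB(1)=\DB(1)$). So $\DB(1)=\varepsilon_0\cdots\varepsilon_{\ell-1}$ is finite with $\varepsilon_{\ell-1}>0$; set $u=\varepsilon_0\cdots\varepsilon_{\ell-2}(\varepsilon_{\ell-1}-1)$. A digit-by-digit comparison of $a$ with $\qDB(1)=u\,\qDBi{\ell}(1)$ and with $\DB(1)=\varepsilon_0\cdots\varepsilon_{\ell-1}0^\omega$ forces $a_0\cdots a_{\ell-1}=u$ and $\sigma^\ell(a)>_{\lex}\qDBi{\ell}(1)$, and the value equation then yields that $\sigma^\ell(a)$ is a $\B^{(\ell)}$-representation of $1$. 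I then split on $\ell\bmod p$. If $\ell=pm_0+i_0$ with $1\le i_0\le p-1$, then $\qDBi{\ell}(1)=\qDBi{i_0}(1)$ and condition (ii) gives $\sigma^\ell(a)<_{\lex}\qDBi{i_0}(1)$, an immediate contradiction. The delicate case is $\ell=pm_0$ with $m_0\ge 1$: here $\qDBi{\ell}(1)=\qDB(1)$, and the resulting fixed-point equation $\qDB(1)=u\,\qDB(1)$ gives $\qDB(1)=u^\omega$. I then iterate the structural analysis, proving by induction on $k\ge 0$ that $a_{k\ell}\cdots a_{(k+1)\ell-1}=u$ and that $\sigma^{(k+1)\ell}(a)>_{\lex}\qDB(1)$ is a $\B$-representation of $1$. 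The crux of the inductive step is ruling out $\sigma^{(k+1)\ell}(a)=\DB(1)$: this would force $\sigma^{(k+1)\ell}(a)>_{\lex}a$ (the digit at position $\ell-1$ would be $\varepsilon_{\ell-1}$ rather than $\varepsilon_{\ell-1}-1$), contradicting condition (i) applied with $m=(k+1)m_0\ge 1$. Induction then yields $a=u^\omega=\qDB(1)$, contradicting the assumed strict inequality $a>_{\lex}\qDB(1)$.
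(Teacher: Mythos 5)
Your proof is correct and follows essentially the same route as the paper's: both directions reduce to Corollary~\ref{cor:Parry1} via the periodicity identities $\qDBi{pm+i}(1)=\qDBi{i}(1)$, and the critical case of a finite $\DB(1)$ of length $\ell\equiv 0\pmod p$ is handled in both by iterating the prefix comparison to force $a=(\varepsilon_0\cdots\varepsilon_{\ell-2}(\varepsilon_{\ell-1}-1))^\omega$, which contradicts $\sigma^{pm}(a)<_{\lex}a$. The only cosmetic difference is in the forward direction, where you derive $\sigma^{pm}(a)<_{\lex}a$ from the greedy dynamics and Proposition~\ref{pro:Increasing}, while the paper gets it directly from $\sigma^{pm}(a)<_{\lex}\qDBi{pm}(1)=\qDB(1)\le_{\lex}\DB(1)=a$.
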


\begin{proof}
The condition is necessary by Corollary~\ref{cor:Parry1} and since $\qDB(1)\le_{\lex}\DB(1)$. Let us show that the condition is sufficient.

Let $a$ be a $\B$-representation of $1$ such that for all $m\in\N_{\ge 1}$, $\sigma^{pm}(a)<_{\lex} a$ and for all $m\in\N$ and $i\in[\![1,p-1]\!]$, $\sigma^{pm+i}(a)<_{\lex} \qDBi{i}(1)$. By Proposition~\ref{pro:GreedyLexGreatest}, $a\le_{\lex}\DB(1)$. By Theorem~\ref{thm:Parry}, if $a<_{\lex}\qDB(1)$ then $\val_{\B}(a)<1$, which contradicts that $a$ is a $\B$-representation of $1$. Thus, $\qDB(1)\le_{\lex} a\le_{\lex} \DB(1)$. If $\DB(1)$ is infinite, then $a=\DB(1)$ as desired. Now, suppose that $\DB(1)=\varepsilon_0\cdots\varepsilon_{\ell-1}$ with $\ell\in\N_{\ge 1}$ and $\varepsilon_{\ell-1}> 0$. Then $a_0\cdots a_{\ell-2}=\varepsilon_0\cdots\varepsilon_{\ell-2}$ and $a_{\ell-1}\in\{\varepsilon_{\ell-1}-1,\varepsilon_{\ell-1}\}$. Since $\val_{\B}(a)=1$, if $a_{\ell-1}=\varepsilon_{\ell-1}$ then $a=\DB(1)$. Therefore, in order to conclude, it suffices to show that $a_{\ell-1}\ne\varepsilon_{\ell-1}-1$. 

Suppose to the contrary that $a_{\ell-1}=\varepsilon_{\ell-1}-1$. Then $\qDBi{\ell}(1)\le_{\lex} \sigma^{\ell}(a)$. By hypothesis, $\ell \equiv 0\pmod p$. Therefore $	\qDB(1)	\le_{\lex} \sigma^{\ell}(a)\le_{\lex} \DB(1)$. By repeating the same argument, we obtain that $a_{\ell}\cdots a_{2\ell-2}=\varepsilon_0\cdots\varepsilon_{\ell-2}$ and $a_{2\ell-1}\in\{\varepsilon_{\ell-1}-1,\varepsilon_{\ell-1}\}$. Since $\sigma^{\ell}(a)<_{\lex} a$ by hypothesis, we must have $a_{2\ell-1}=\varepsilon_{\ell-1}-1$. By iterating the argument, we obtain that $a=(\varepsilon_0\cdots\varepsilon_{\ell-2}(\varepsilon_{\ell-1}-1))^\omega$, contradicting that $\sigma^{\ell}(a)<_{\lex}a$.
\end{proof}

When $p=1$, Proposition~\ref{prop:Parry2} provides us with the purely combinatorial condition proved by Parry~\cite{Parry:1960} in order to determine whether a given $\B$-representation of $1$ is the $\B$-expansion of $1$. However, when $p\ge 2$, we need to compute the quasi-greedy $\B^{(i)}$-expansions of $1$ for every $i\in[\![1,p-1]\!]$ first. This might lead us to a circular computation, in which case the condition may seem not useful in practice. Indeed, suppose that $p=2$ and that we are provided with a $\B$-representation $a$ of $1$ and a $\B^{(1)}$-representation $b$ of $1$. Then in order to check if $a=\DB(1)$, we need to compute $\qDBi{1}(1)$, and hence $\DBi{1}(1)$ first. But then, in order to check if $b=\DBi{1}(1)$, we need to compute $\qDB(1)$, and hence $\DB(1)$, which brings us back to the initial problem. Nevertheless, this condition can be useful to check if a specific $\B$-representation of $1$ is the $\B$-expansion of $1$. For example, consider a $\B$-representation $a$ of $1$ such that for all $m\in \N_{\ge 1}$, $\sigma^{pm}(a)<_{\lex} a$ and for all $m\in \N$ and $i\in[\![1,p-1]\!]$, $a_{pm+i}<\floor{\beta_i}-1$, then the infinite words $a$ satisfies the hypothesis of Proposition~\ref{prop:Parry2} and $a$ is the $\B$-expansion of $1$.

We have seen that considering an infinite word $a$ over $\N$ and a positive integer $p$, there may exist more than one alternate base $\B$ of length $p$ such that $\val_{\B}(a)=1$. Moreover, among all of these alternate bases, it may be that some are such that $a$ is greedy and others are such that $a$ is not. Thus, a purely combinatorial condition for checking whether a $\B$-representation is greedy cannot exist.

\begin{example}
Consider $a=2(10)^\omega$. Then $\val_{\Aalpha}(a)=\val_{\B}(a)=1$ for both $\Aalpha=(\overline{1+\varphi,2})$ and $\B=(\overline{\frac{31}{10},\frac{420}{341}})$. It can be checked that $d_{\Aalpha}(1)=a$ and $\DB(1)\ne a$. 
\end{example}

Furthermore, an infinite word $a$ over $\N$ can be greedy for more than one alternate base. 

\begin{example}
\label{ex:110}
The infinite word $110^\omega$ is the expansion of $1$ with respect to the three alternate bases $(\overline{\varphi,\varphi})$, $(\overline{\frac{5+\sqrt{13}}{6},\frac{1+\sqrt{13}}{2}})$ and $(\overline{1.7,\frac{1}{0.7}})$.
\end{example}

At the opposite, it may happen that an infinite word $a$ is a $\B$-representation of $1$ for different alternate bases $\B$ but that none of these are such that $a$ is greedy. As an illustration, by Proposition~\ref{pro:PurelyPeriodic}, for all purely periodic infinite words $a$ over $\N$, all alternate bases $\B$ such that $\val_{\B}(a)=1$ are such that $a$ is not the $\B$-expansion of $1$.

\begin{example}
The infinite word $(10)^{\omega}$ is a representation of $1$ with respect to the three alternate bases considered in Example~\ref{ex:110}. However, the infinite words $(10)^{\omega}$ is purely periodic therefore, by Proposition~\ref{pro:PurelyPeriodic}, it is not the expansion of $1$ in any alternate base.
\end{example}

\subsection{The $\B$-shift}

We define sets of finite words $Y_{\B,h}$ for $h\in\Int$ as follows. If $\qDB(1)=t_0t_1\cdots$ then we let
\[
	Y_{\B,h}=\{t_0\cdots t_{\ell-2}s \colon 
	\ell\in\N_{\ge 1},\
	\ell\bmod p= h,\
	t_{\ell-1}>0,\
	s\in[\![0,t_{\ell-1}-1]\!]
	\}.
\]
Note that $Y_{\B,h}$ is empty if and only if for all $\ell\in\N_{\ge 1}$ such that $\ell\bmod p= h$, $t_{\ell-1}=0$. So, unlike the sets $X_{\B,h}$ defined in Section~\ref{sec:BShift}, the sets $Y_{\B,h}$ can be infinite. More precisely, $Y_{\B,h}$ is infinite if and only if there exists infinitely many $\ell\in\N_{\ge 1}$ such that $\ell\bmod p= h$ and $t_{\ell-1}>0$. 

\begin{proposition}
\label{prop:DY} 
We have 
\[
	D_{\B}=\bigcup_{h_0=0}^{p-1} Y_{\B,h_0}
		\Bigg(\bigcup_{h_1=0}^{p-1} Y_{\B^{(h_0)},h_1}
		\Bigg(\bigcup_{h_2=0}^{p-1} Y_{\B^{(h_0+h_1)},h_2}
		\Bigg(
		\quad\cdots\quad
		\Bigg)
		\Bigg)
		\Bigg).
\]
\end{proposition}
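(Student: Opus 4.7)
The plan is to derive this decomposition directly from Proposition~\ref{prop:DX} by exploiting the $p$-periodicity of the alternate base $\B$.

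First, I would observe that since $\beta_{n+p} = \beta_n$ for every $n \in \N$, one has $\B^{(\ell)} = \B^{(\ell \bmod p)}$ for every $\ell \in \N$. In particular, given any finite sequence $\ell_0, \ldots, \ell_{k-1} \in \N_{\ge 1}$, if we set $h_j = \ell_j \bmod p$, then $\B^{(\ell_0 + \cdots + \ell_{k-1})}$ coincides with $\B^{(h_0 + \cdots + h_{k-1})}$. Thus, in the nested expression of Proposition~\ref{prop:DX}, the Cantor base appearing at the $k$-th level depends only on the residues $h_j$ of the previous lengths $\ell_j$, not on the lengths themselves.

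Next, directly from the definitions of the sets $X_{\B,\ell}$ and $Y_{\B,h}$ one reads off
\[
Y_{\B^{(h_0+\cdots+h_{k-1})},h_k} = \bigcup_{\substack{\ell \in \N_{\ge 1}\\ \ell \bmod p = h_k}} X_{\B^{(h_0+\cdots+h_{k-1})},\ell},
\]
because the condition $t_{\ell-1} > 0$ appearing in the definition of $Y$ corresponds exactly to the fact that $X_{\B^{(h_0+\cdots+h_{k-1})},\ell}$ is nonempty. Consequently, every union $\bigcup_{\ell_k \in \N_{\ge 1}} X_{\B^{(h_0+\cdots+h_{k-1})},\ell_k}$ inside Proposition~\ref{prop:DX} can be rewritten, after partitioning its index set according to residues mod $p$, as $\bigcup_{h_k = 0}^{p-1} Y_{\B^{(h_0+\cdots+h_{k-1})},h_k}$.

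Performing this regrouping at each of the nested levels of Proposition~\ref{prop:DX} yields the stated equality. The main obstacle I anticipate is not mathematical but notational: one must verify that, given a concrete element $a \in D_{\B}$ decomposed as $u_0 u_1 u_2 \cdots$ via a sequence $(\ell_k)_{k \in \N}$ of positive integers by Proposition~\ref{prop:DX}, the associated residue sequence $(h_k)_{k \in \N}$ in $\Int$ places each factor $u_k$ into $Y_{\B^{(h_0+\cdots+h_{k-1})},h_k}$, and conversely that every factorization appearing on the right-hand side arises in this way from some admissible sequence of lengths. Both checks reduce to a mechanical unpacking of the definitions together with the identity $\B^{(\ell)} = \B^{(\ell \bmod p)}$, so no deeper input beyond Proposition~\ref{prop:DX} and the periodicity of $\B$ is needed.
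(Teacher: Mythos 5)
Your proposal is correct and follows essentially the same route as the paper: the paper's proof simply notes that $\bigcup_{h=0}^{p-1} Y_{\B,h}=\bigcup_{\ell\in\N_{\ge 1}} X_{\B,\ell}$ and invokes Proposition~\ref{prop:DX}, which is exactly your regrouping of the index set by residues modulo $p$ combined with the periodicity identity $\B^{(\ell)}=\B^{(\ell\bmod p)}$. Your write-up just makes explicit the bookkeeping that the paper leaves as "easily seen."
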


\begin{proof}
It is easily seen that for all $h\in\Int$, 
\[
	\bigcup_{h=0}^{p-1} Y_{\B,h}
	=\bigcup_{\ell\in\N_{\ge 1}} X_{\B,\ell}.
\]
The conclusion follows from Proposition~\ref{prop:DX}.
\end{proof}

\begin{corollary}
We have $D_{\B}=\displaystyle{\bigcup_{h=0}^{p-1} Y_{\B,h} D_{\B^{(h)}}}$.
\end{corollary}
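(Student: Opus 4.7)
My plan is to deduce this corollary directly from the analogous equality for general Cantor bases proved in Section~\ref{sec:BShift}, namely $D_{\B}=\bigcup_{\ell\in\N_{\ge 1}} X_{\B,\ell}\, D_{\B^{(\ell)}}$. The only substantive content to supply is that the periodicity of $\B$ collapses that countable union over $\ell$ into a finite union indexed by residues modulo $p$.

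First I will record the immediate consequence of periodicity: since $\B$ has period $p$, the shifted Cantor base $\B^{(\ell)}$ depends on $\ell$ only through $\ell\bmod p$, so in particular $D_{\B^{(\ell)}}=D_{\B^{(\ell \bmod p)}}$ for every $\ell\in\N$. Next I will compare the definitions of $X_{\B,\ell}$ and $Y_{\B,h}$ to verify the identity
\[
    Y_{\B,h} = \bigcup_{\substack{\ell\in\N_{\ge 1}\\ \ell \bmod p = h}} X_{\B,\ell}
    \qquad \text{for each } h\in\Int.
\]
This is essentially a tautology: $X_{\B,\ell}$ is empty precisely when $t_{\ell-1}=0$, and the explicit condition $t_{\ell-1}>0$ in the definition of $Y_{\B,h}$ simply suppresses those vanishing contributions, while the remaining elements in both families have exactly the same form $t_0\cdots t_{\ell-2}s$ with $s\in[\![0,t_{\ell-1}-1]\!]$.

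Combining the two observations, I will reindex the general union by the residue $h=\ell\bmod p$:
\[
    \bigcup_{h=0}^{p-1} Y_{\B,h}\, D_{\B^{(h)}}
    = \bigcup_{h=0}^{p-1} \bigcup_{\substack{\ell\ge 1\\ \ell\bmod p = h}} X_{\B,\ell}\, D_{\B^{(\ell)}}
    = \bigcup_{\ell\in\N_{\ge 1}} X_{\B,\ell}\, D_{\B^{(\ell)}}
    = D_{\B},
\]
the last equality being the corollary recalled above. Since the proof amounts to a reindexing justified by periodicity of $\B$, I do not anticipate any genuine obstacle.
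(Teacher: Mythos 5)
Your proof is correct and follows essentially the same route the paper intends: the corollary is an immediate consequence of the identity $Y_{\B,h}=\bigcup_{\ell\bmod p=h}X_{\B,\ell}$ (the per-residue refinement of the observation made in the proof of Proposition~\ref{prop:DY}) together with the periodicity fact $\B^{(\ell)}=\B^{(\ell\bmod p)}$, applied to the general-Cantor-base corollary $D_{\B}=\bigcup_{\ell\ge 1}X_{\B,\ell}D_{\B^{(\ell)}}$. The paper leaves this corollary unproved, and your reindexing argument is a complete and valid justification.
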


In the case where all $\qDBi{i}(1)$ are ultimately periodic, we define an automaton $\A_{\B}$ over the finite alphabet $A_{\B}$. Let $\qDBi{i}(1)=t_0^{(i)}\cdots t_{m_i-1}^{(i)} \big(t_{m_i}^{(i)}\cdots t_{m_i+n_i-1}^{(i)}\big)^\omega$. The set of states is 
\[
	Q=\big\{q_{i,j,k}\colon 
		i,j\in\Int,\ k\in[\![0,m_i+n_i-1]\!]\big\}.
\]
The set $I$ of initial states and the set $F$ of final states are defined as
\[
	I=\big\{q_{i,i,0}\colon i\in\Int\big\}
	\quad\text{and}\quad F=Q.
\] 
The (partial) transition function $\delta\colon Q\times A_{\B}\to Q$ of the automaton $\A_{\B}$ is defined as follows. For each $i,j\in\Int$ and each $k\in[\![0,m_i+n_i-1]\!]$, 
we have 
\[
	\delta(q_{i,j,k},t_k^{(i)})=
	\begin{cases}
		q_{i,(j+1)\bmod p,k+1} & \text{ if }k\ne m_i+n_i-1\\
		q_{i,(j+1)\bmod p,m_i} & \text{ else}
	\end{cases}
\]
and for all $s\in[\![0,t_k^{(i)}-1]\!]$, we have
\[
	\delta(q_{i,j,k},s)=q_{(j+1)\bmod p,(j+1)\bmod p,0}.
\]

\begin{example} 
Let $\B=(\overline{\varphi^2, 3+\sqrt{5}})$. Then $\DBi{0}(1)=2(30)^\omega$ and $\DBi{1}(1)=5(03)^\omega$. The corresponding automaton $\mathcal{A}_{\B}$ is depicted in Figure~\ref{fig:Automaton-230-503}.
\begin{figure}[htb]
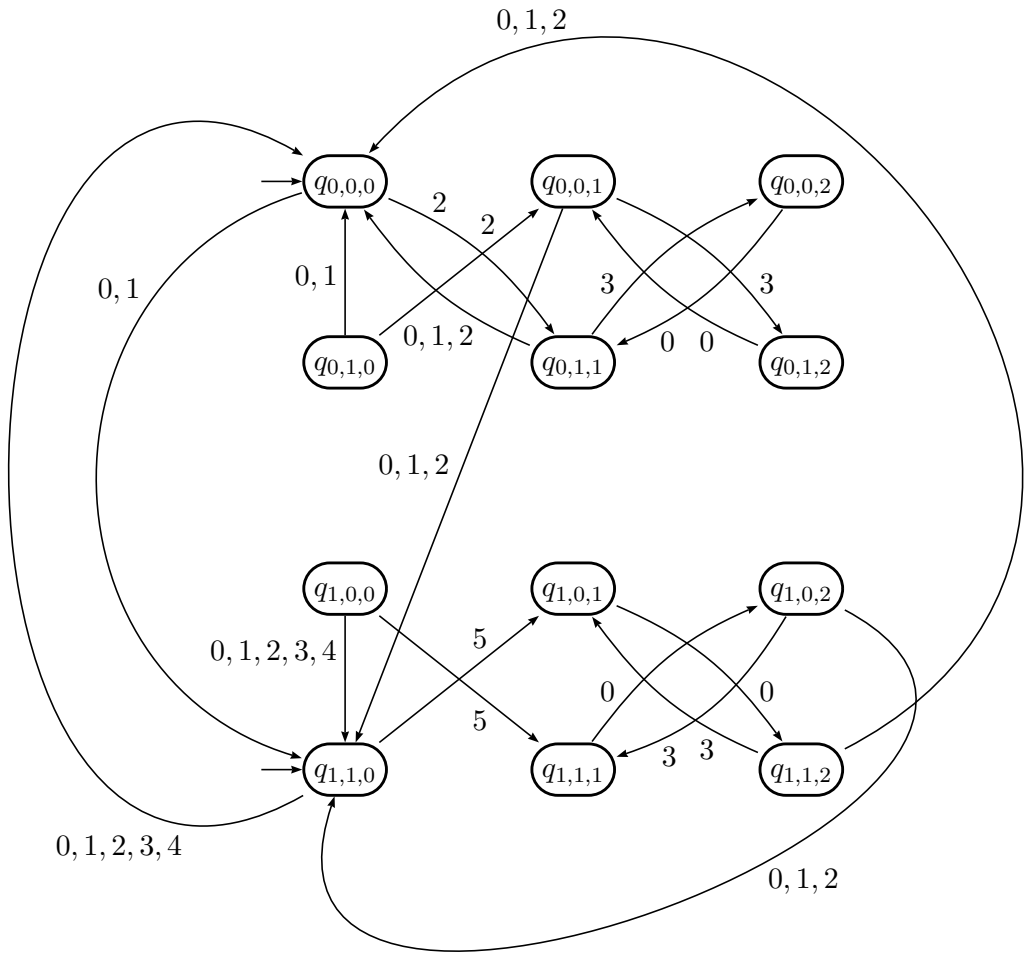

\centering
\VCDraw{\begin{VCPicture}{(0,-4)(10,17)}
 \LargeState
 \StateVar[q_{0,0,0}]{(0,13)}{1}
 \StateVar[q_{0,0,1}]{(5,13)}{2} 
 \StateVar[q_{0,0,2}]{(10,13)}{3} 
 \StateVar[q_{0,1,0}]{(0,9)}{4}
 \StateVar[q_{0,1,1}]{(5,9)}{5} 
 \StateVar[q_{0,1,2}]{(10,9)}{6} 
 \StateVar[q_{1,0,0}]{(0,4)}{7}
 \StateVar[q_{1,0,1}]{(5,4)}{8} 
 \StateVar[q_{1,0,2}]{(10,4)}{9} 
 \StateVar[q_{1,1,0}]{(0,0)}{10}
 \StateVar[q_{1,1,1}]{(5,0)}{11} 
 \StateVar[q_{1,1,2}]{(10,0)}{12} 
\Initial{1}
\Initial{10}
\VArcL[.2]{arcangle=15}{1}{5}{2} 
\ChgLArcCurvature{1}
\VArcR[0.25]{arcangle=-73}{1}{10}{0,1} 

\ChgLArcCurvature{0.8}
\VArcL[.8]{arcangle=15}{2}{6}{3} 
\EdgeR[.5]{2}{10}{0,1,2}

\VArcL[.8]{arcangle=15}{3}{5}{0}

\EdgeL{4}{1}{0,1}
\EdgeL[.75]{4}{2}{2}

\VArcL[.3]{arcangle=15}{5}{1}{0,1,2} 
\VArcL[.2]{arcangle=15}{5}{3}{3}

\VArcL[.2]{arcangle=15}{6}{2}{0}

\EdgeR[.3]{7}{10}{0,1,2,3,4}
\EdgeR[.7]{7}{11}{5} 

\VArcL[.8]{arcangle=15}{8}{12}{0} 

\VArcL[.8]{arcangle=20}{9}{11}{3} 
\ChgLArcCurvature{1.4}
\VArcL{arcangle=130}{9}{10}{0,1,2} 

\EdgeL[.7]{10}{8}{5} 
\VArcL[.15]{arcangle=120}{10}{1}{0,1,2,3,4} 

\ChgLArcCurvature{0.8}
\VArcL[.2]{arcangle=15}{11}{9}{0} 

\VArcL[.2]{arcangle=15}{12}{8}{3} 
\ChgLArcCurvature{1.4}
\VArcR[.85]{arcangle=-100}{12}{1}{0,1,2} 
\end{VCPicture}}
\caption{The automaton $\mathcal{A}_{(\overline{\varphi^2, 3+\sqrt{5}})}$.}
\label{fig:Automaton-230-503}
\end{figure}
By removing the non-accessible states, we obtain the automaton of Figure~\ref{fig:Automaton-230-503-accessible}.
\begin{figure}[htb]
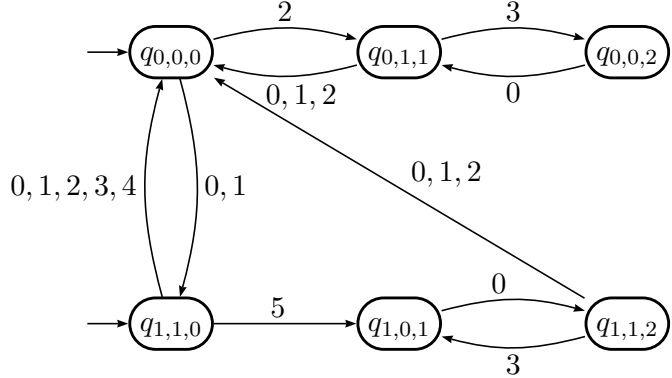

\centering
\VCDraw{\begin{VCPicture}{(0,-1)(10,8)}
 \LargeState
 \StateVar[q_{0,0,0}]{(0,6)}{1}
 \StateVar[q_{0,0,2}]{(10,6)}{3} 
 \StateVar[q_{0,1,1}]{(5,6)}{5} 
 \StateVar[q_{1,0,1}]{(5,0)}{8} 
 \StateVar[q_{1,1,0}]{(0,0)}{10}
 \StateVar[q_{1,1,2}]{(10,0)}{12} 
\Initial{1}
\Initial{10}
\VArcL[.5]{arcangle=15}{1}{5}{2} 
\VArcL[.5]{arcangle=15}{1}{10}{0,1} 

\VArcL[.5]{arcangle=15}{3}{5}{0}

\VArcL[.4]{arcangle=15}{5}{1}{0,1,2} 
\VArcL[.5]{arcangle=15}{5}{3}{3}

\VArcL[.4]{arcangle=15}{8}{12}{0} 

\EdgeL{10}{8}{5} 
\VArcL[.5]{arcangle=15}{10}{1}{0,1,2,3,4} 

\VArcL[.5]{arcangle=15}{12}{8}{3} 
\ChgLArcCurvature{1.4}
\EdgeR{12}{1}{0,1,2} 
\end{VCPicture}}
\caption{An accessible automaton accepting $\Fac(\Sigma_{(\overline{\varphi^2, 3+\sqrt{5}})})$.}
\label{fig:Automaton-230-503-accessible}
\end{figure}
\end{example}

The following result extends a result of Bertrand-Mathis for real bases~\cite{Bertrand-Mathis:1986}.
Recall that a subshift $S$ of $A^{\N}$ is called \emph{sofic} if the language $\Fac(S)\subseteq A^*$ is accepted by a finite automaton. 

\begin{theorem}
The $\B$-shift $\Sigma_{\B}$ is sofic if and only if for all $i\in \Int$, $\qDBi{i}(1)$ is ultimately periodic.
\end{theorem}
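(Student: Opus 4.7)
The plan is to handle the two implications separately: the backward direction will exploit the explicit automaton $\A_{\B}$ constructed just above, while the forward direction will reduce to a Bertrand-Mathis-style argument applied inside each subshift $S_{\B^{(i)}}$.

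For the implication $(\Leftarrow)$, assume every $\qDBi{i}(1)$ is ultimately periodic, so that $\A_{\B}$ has finitely many states by construction. It then suffices to verify that the language of finite words with an accepting run in $\A_{\B}$ equals $\Fac(\Sigma_{\B})$. By Proposition~\ref{prop:Fac} this factor set coincides with $\Fac(D_{\B})$, and Proposition~\ref{prop:ShiftedGreedy} yields $\Fac(D_{\B}) = \bigcup_{i=0}^{p-1} \Pref(D_{\B^{(i)}})$, since the suffix from any position $k$ of an element of $D_{\B}$ is itself an element of $D_{\B^{(k \bmod p)}}$. The main task is therefore to show that the runs of $\A_{\B}$ starting at $q_{i,i,0}$ accept precisely $\Pref(D_{\B^{(i)}})$; this is the step I expect to be the most delicate. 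The automaton's transitions are designed to mirror the block decomposition of Proposition~\ref{prop:DY}: the spine steps $\delta(q_{i,j,k}, t_k^{(i)}) = q_{i,(j+1)\bmod p, k+1}$ follow the initial segment of $\qDBi{i}(1)$ within a block, while the descent steps $\delta(q_{i,j,k}, s) = q_{(j+1)\bmod p, (j+1)\bmod p, 0}$ for $s < t_k^{(i)}$ close a block of $Y_{\B^{(i)}, (k+1)\bmod p}$ and reset to the initial state of the next base shift. The inclusion ``accepted implies prefix of a greedy sequence'' requires completing a run ending mid-spine at $q_{j, *, k}$ to a full accepting run, which relies on Corollary~\ref{cor:DX} guaranteeing that every prefix of $\qDBi{j}(1)$ belongs to $\Pref(D_{\B^{(j)}})$; the converse inclusion is a direct unpacking of Proposition~\ref{prop:DY}.

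For the implication $(\Rightarrow)$, assume $\Sigma_{\B}$ is sofic and fix $i \in \Int$. Because $\B$ is periodic, $\Sigma_{\B^{(i)}} = \overline{\bigcup_{j=0}^{p-1} D_{\B^{(j)}}} = \Sigma_{\B}$, and Proposition~\ref{prop:Fac} applied to $\B^{(i)}$ gives $\Fac(S_{\B^{(i)}}) = \Fac(\Sigma_{\B^{(i)}}) = \Fac(\Sigma_{\B})$, which is regular by hypothesis; hence $S_{\B^{(i)}}$ is itself sofic. A standard consequence (via the Myhill-Nerode congruence of the minimal DFA) is that the collection of follower sets $F_{S_{\B^{(i)}}}(w) = \{\alpha \in A_{\B}^\N : w\alpha \in S_{\B^{(i)}}\}$ is finite. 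Setting $t = \qDBi{i}(1)$ and $u_n = t_0 \cdots t_{n-1}$, each $u_n$ lies in $\Pref(D_{\B^{(i)}}) \subseteq \Fac(S_{\B^{(i)}})$ by Corollary~\ref{cor:DX}, and pigeonhole produces $m < n$ with $F_{S_{\B^{(i)}}}(u_m) = F_{S_{\B^{(i)}}}(u_n)$. Applying Proposition~\ref{prop:ParryDS} to $\B^{(i)}$ at shift $k=0$, every $\alpha$ in the follower set at $u_n$ satisfies $u_n\alpha \le_{\lex} t$, hence $\alpha \le_{\lex} \sigma^n(t)$; the bound is attained since $u_n \cdot \sigma^n(t) = t \in S_{\B^{(i)}}$, so $\sigma^n(t) = \max F_{S_{\B^{(i)}}}(u_n)$ and similarly $\sigma^m(t) = \max F_{S_{\B^{(i)}}}(u_m)$. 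Equality of the follower sets then forces $\sigma^m(t) = \sigma^n(t)$, making $t$ ultimately periodic with period dividing $n - m$.
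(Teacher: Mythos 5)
Your sufficiency direction follows the paper's own argument (the automaton $\A_{\B}$, the block decomposition of Proposition~\ref{prop:DY}, and Corollary~\ref{cor:DX}) and is fine as a sketch. The necessity direction, however, has a genuine gap. The follower-set characterization you invoke --- ``sofic implies finitely many follower sets'' --- is a theorem about \emph{subshifts}, i.e.\ sets that are both closed and shift-invariant; for such an $S$ one has $x\in S$ iff $\Fac(x)\subseteq\Fac(S)$, so $S$ and its follower sets are determined by the regular language $\Fac(S)$. But $S_{\B^{(i)}}=\overline{D_{\B^{(i)}}}$ is \emph{not} shift-invariant when $p\ge 2$: one only has $\sigma(S_{\B^{(i)}})\subseteq S_{\B^{(i+1)}}$, and these sets genuinely differ. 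For instance, for $\B=(\overline{\varphi^2,3+\sqrt 5})$ one checks via Proposition~\ref{prop:ParryDS} that $(03)^\omega\in S_{\B^{(0)}}$ while $\sigma\big((03)^\omega\big)=(30)^\omega\notin S_{\B^{(0)}}$. For a closed set that is not shift-invariant, regularity of the factor language does not bound the number of follower sets: take $C=\{a\}$ where $a$ is a non-ultimately-periodic word containing every finite word over $\{0,1\}$ as a factor; then $\Fac(C)=\{0,1\}^*$ is regular, yet the follower sets $F_C(a_0\cdots a_{n-1})=\{\sigma^n(a)\}$ are pairwise distinct. So the step ``$\Fac(S_{\B^{(i)}})$ regular $\Rightarrow$ finitely many follower sets of $S_{\B^{(i)}}$'' is exactly what needs to be proved here, not a standard consequence; as written, your argument begs the question at this point.

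The natural repair --- running the same argument in the genuine subshift $\Sigma_{\B}=\bigcup_{j\in\Int}S_{\B^{(j)}}$ --- destroys the other half of your argument: an $\alpha$ with $u_n\alpha\in\Sigma_{\B}$ need only satisfy $u_n\alpha\le_{\lex}\qDBi{j}(1)$ for \emph{some} $j$, so if some $\qDBi{j}(1)$ is lexicographically larger than $\qDBi{i}(1)$, the word $\sigma^n(t)$ need not be the maximum of the follower set of $u_n$ in $\Sigma_{\B}$, and equality of follower sets no longer forces $\sigma^m(t)=\sigma^n(t)$. This is precisely the difficulty the paper's proof is built around: the separating words $w^{(m)}$ are prefixed by carefully positioned blocks $t_0^{(i_s)}\cdots t_{g-1}^{(i_s)}0^{\cdots}$ whose sole purpose is to force, for every $i$ with $w^{(n)}z\in\Pref(D_{\B^{(i)}})$, the occurrence of $t_0^{(j)}\cdots t_{n-1}^{(j)}z$ to sit at a position $r$ with $\qDBi{(i+r)\bmod p}(1)\le_{\lex}\qDBi{j}(1)$, after which the lexicographic contradiction you are aiming for becomes available. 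Your proof needs this alignment device (or an equivalent one) to go through. In the case $p=1$ your argument is correct and is essentially the classical Bertrand-Mathis proof, as the paper itself remarks after the theorem.
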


\begin{proof} 
Suppose that for all $i\in \Int$, $\qDBi{i}(1)$ is ultimately periodic. We show that the automaton $\A_{\B}$ accepts the language $\Fac(\Sigma_{\B})$. From Propositions~\ref{prop:subshift} and~\ref{prop:Fac}, we obtain that 
\begin{equation}
\label{eq:UnionPref}
	\Fac(\Sigma_{\B})=\Pref(\Delta_{\B})=\bigcup_{i=0}^{p-1}\Pref(D_{\B^{(i)}}).
\end{equation}
Therefore, it suffices to show that for each $i\in\Int$, the language accepted from the initial state $q_{i,i,0}$ is precisely $\Pref(D_{\B^{(i)}})$. Let thus $i\in\Int$. 

First, consider a word $w$ accepted from $q_{i,i,0}$. By Corollary~\ref{cor:DX}, if $w$ is a prefix of $\qDBi{i}(1)$ then $w\in\Pref(D_{\B^{(i)}})$. Otherwise, by construction of $\A_{\B}$, $w$ starts with some $u\in Y_{\B^{(i)},h_0}$ where $h_0=|u|\bmod p$. Moreover, the state reached after reading $u$ from $q_{i,i,0}$ is $q_{j,j,0}$ where $j=(i+h_0)\bmod p$. We obtain that $w\in \Pref(D_{\B^{(i)}})$ by iterating the reasoning and by using Proposition~\ref{prop:DY}. 

Conversely, let $w\in \Pref(D_{\B^{(i)}})$. By Proposition~\ref{prop:DY}, we know that there exists $\ell\in\N$ and $h_0,\ldots,h_\ell\in\Int$ such that $w=u_0\cdots u_{\ell-1}x$ with $u_k\in Y_{\B^{(i+h_0+\cdots h_{k-1})},h_k}$ for all $k\in[\![0,\ell-1]\!]$ and $x$ is a (possibly empty) prefix of $\qDBi{i_\ell}(1)$ where $i_\ell = (i+h_0+\cdots+h_{\ell-1}) \bmod p$. By construction of $\A_{\B}$, by reading $u_0$ from the state $q_{i,0}^{(i)}$, we reach the state $q_{i_1,i_1,0}$ where $i_1= (i+h_0) \bmod p$. Then, by reading $u_1$ from the latter state, we reach the state $q_{i_2,i_2,0}$ where $i_2= (i+h_0+h_1) \bmod p$. By iterating the argument, after reading $u_0\cdots u_{\ell-1}$, we end up in the state $q_{i_\ell,i_\ell,0}$. Since $x$ is a prefix of $\qDBi{i_\ell}(1)$, it is possible to read $x$ from the state $q_{i_\ell,i_\ell,0}$ in $\A_{\B}$. Since all states of $\A_{\B}$ are final, we obtain that $w$ is accepted from $q_{i,i,0}$.

We turn to the necessary condition. Let 
\[
	\qDBi{i}(1)=t_0^{(i)}t_1^{(i)}\cdots\quad \text{ for every } i\in\Int.
\] 
Suppose that $j\in\Int$ is such that $\qDBi{j}(1)$ is not ultimately periodic. Our aim is to find an infinite sequence  $(w^{(m)})_{m\in\N}$ of finite words over $A_{\B}$ such that for all distinct $m,n\in\N$, the words $w^{(m)}$ and $w^{(n)}$ are not right-congruent with respect to $\Fac(\Sigma_{\B})$. Recall that words $x$ and $y$ are not right-congruent with respect to a language $L$ if $x^{-1}L\ne y^{-1}L$, i.e.\ if there exists some word $z$ such that either $xz\in L$ and $yz\notin L$, or $xz\notin L$ and $yz\in L$. If we succeed then we will know that the number of right-congruence classes is infinite and we will be able to conclude that $\Fac(\Sigma_{\B})$ is not accepted by a finite automaton.

We define a partition $(G_1,\ldots,G_q)$ of $\Int$ as follows. Let $r=\Card\{\qDBi{i}(1)\colon i\in\Int\}$ and let $i_1,\ldots,i_r\in\Int$ be such that $\qDBi{i_1}(1),\ldots,\qDBi{i_r}(1)$ are pairwise distinct. Without loss of generality, we can suppose that $\qDBi{i_1}(1)>_{\lex}\cdots>_{\lex}\qDBi{i_r}(1)$. Let $q\in[\![1,r]\!]$ be the unique index such that $\qDBi{i_q}(1)=\qDBi{j}(1)$. We set 
\[
	G_s=\{i\in\Int\colon \qDBi{i}(1)=\qDBi{i_s}(1)\}\quad \text{for }s\in[\![1,q-1]\!] 
\]
and 
\[
	G_q=\{i\in\Int\colon \qDBi{i}(1)\le \qDBi{j}(1)\}.
\] 
For each $s\in[\![1,q-1]\!]$, we write $G_s=\{i_{s,1},\ldots,i_{s,\alpha_s}\}$ where $i_{s,1}<\ldots<i_{s,\alpha_s}$ and we use the convention that $i_{s,\alpha_s+1}=i_{s+1,1}$ for $s\le q-2$ and $i_{q-1,\alpha_{q-1}+1}=j$. Moreover, we let $g\in\N_{\ge 1}$ be such that for all $i,i'\in \Int$ such that $\qDBi{i}(1)\ne \qDBi{i'}(1)$, the length-$g$ prefixes of $\qDBi{i}(1)$ and $\qDBi{i'}(1)$ are distinct. Then, for $s\in[\![1,q-1]\!] $, we define $C_s$ to be the least $c\in\N_{\ge 1}$ such that $t^{(i_s)}_{g-1+c}> 0$. Finally, let $N\in\N_{\ge 1}$ be such that $pN\ge \max\{g,C_1,\ldots,C_{q-1}\}$. 

For all $m\in\N$, consider 
\[
	w^{(m)}
	=\left(
	\prod_{s=1}^{q-1} 
	\prod_{k=1}^{\alpha_s}
	t_0^{(i_s)}\cdots t_{g-1}^{(i_s)} 
	0^{p(2N+1)-g+i_{s,k+1}-i_{s,k}} 
	\right) 
	t_0^{(j)}\cdots t_{m-1}^{(j)}.
\] 
For all $m\in\N$, $s\in [\![1,q-1]\!]$ and $k\in [\![1,\alpha_s]\!]$, the factor $t_0^{(i_s)}\cdots t_{g-1}^{(i_s)}0^{p(2N+1)-g+i_{s,k+1}-i_{s,k}}$ has length $p(2N+1)+i_{s,k+1}-i_{s,k}$, and hence occurs at a position congruent to $i_{s,k}-i_{1,1}$ modulo $p$ in $w^{(m)}$. Similarly, for all $m\in\N$, the factor $t_0^{(j)}\cdots t_{m-1}^{(j)}$ occurs at a position congruent to $j-i_{1,1}$ modulo $p$ in $w^{(m)}$. These observations will be crucial in what follows. The situation is illustrated in Figure~\ref{fig:wm}.\begin{figure}[htb]
\begin{tikzpicture}
\draw (-1,0.1) node[above]{$w^{(m)}=$};
\draw (0,0) rectangle (2,0.6); 
\draw (1,0) node[above]{$w_{1,1}$};
\draw (0,0) node[]{$\bullet$};
\draw (0,0) node[below]{${\scriptstyle 0}$};
\draw (0.2,-0.9) node[]{$\cdots$};
\draw (2.2,0) rectangle (4,0.6);
\draw (3.1,0) node[above]{$w_{1,2}$};
\draw (2.2,0) node[]{$\bullet$};
\draw (2.2,0) node[below]{${\scriptstyle i_{1,2}-i_{1,1}}$};
\draw (5.5,0) node[above]{$\cdots$};
\draw (7,0) rectangle (8.7,0.6);
\draw (7.85,0) node[above]{$w_{1,\alpha_1}$};
\draw (7,0) node[]{$\bullet$};
\draw (7,0) node[below]{${\scriptstyle i_{1,\alpha_1}-i_{1,1}}$};
\draw (0.2,-2.9) node[]{$\cdots$};
\draw (0,-2) rectangle (2.3,-1.4); 
\draw (1.15,-2) node[above]{$w_{s,1}$};
\draw (0,-2) node[]{$\bullet$};
\draw (0,-2) node[below]{${\scriptstyle i_{s,1}-i_{1,1}}$};
\draw (2.5,-2) rectangle (4.5,-1.4);
\draw (3.5,-2) node[above]{$w_{s,2}$};
\draw (2.5,-2) node[]{$\bullet$};
\draw (2.5,-2) node[below]{${\scriptstyle i_{s,2}-i_{1,1}}$};
\draw (5.95,-2) node[above]{$\cdots$};
\draw (7.4,-2) rectangle (9.5,-1.4);
\draw (8.45,-2) node[above]{$w_{s,\alpha_s}$};
\draw (7.4,-2) node[]{$\bullet$};
\draw (7.4,-2) node[below]{${\scriptstyle i_{s,\alpha_s}-i_{1,1}}$};
\draw (0,-4) rectangle (2.2,-3.4); 
\draw (1.1,-4) node[above]{$w_{q-1,1}$};
\draw (0,-4) node[]{$\bullet$};
\draw (0,-4) node[below]{${\scriptstyle i_{q-1,1}-i_{1,1}}$};
\draw (2.4,-4) rectangle (4.2,-3.4);
\draw (3.3,-4) node[above]{$w_{q-1,2}$};
\draw (2.4,-4) node[]{$\bullet$};
\draw (2.4,-4) node[below]{${\scriptstyle i_{q-1,2}-i_{1,1}}$};
\draw (5.35,-4) node[above]{$\cdots$};
\draw (6.5,-4) rectangle (8.8,-3.4);
\draw (7.65,-4) node[above]{$w_{q-1,\alpha_{q-1}}$};
\draw (6.5,-4) node[]{$\bullet$};
\draw (6.5,-4) node[below]{${\scriptstyle i_{q-1,1}-i_{1,1}}$};
\draw (0,-5.25) rectangle (2.5,-4.65);
\draw (1.25,-5.4) node[above]{$t_0^{(j)}\cdots t_{m-1}^{(j)}$};
\draw (0,-5.25) node[]{$\bullet$};
\draw (0,-5.25) node[below]{${\scriptstyle j-i_{1,1}}$};
\end{tikzpicture}
\caption{Positions modulo $p$ of the occurrences of the factors $w_{k,s}$ and $t_0^{(j)}\cdots t_{m-1}^{(j)}$ in $w^{(m)}$, where $w_{k,s}=t_0^{(i_s)}\cdots t_{g-1}^{(i_s)} 0^{p(2N+1)-g+i_{s,k+1}-i_{s,k}}$.}
\label{fig:wm}
\end{figure}
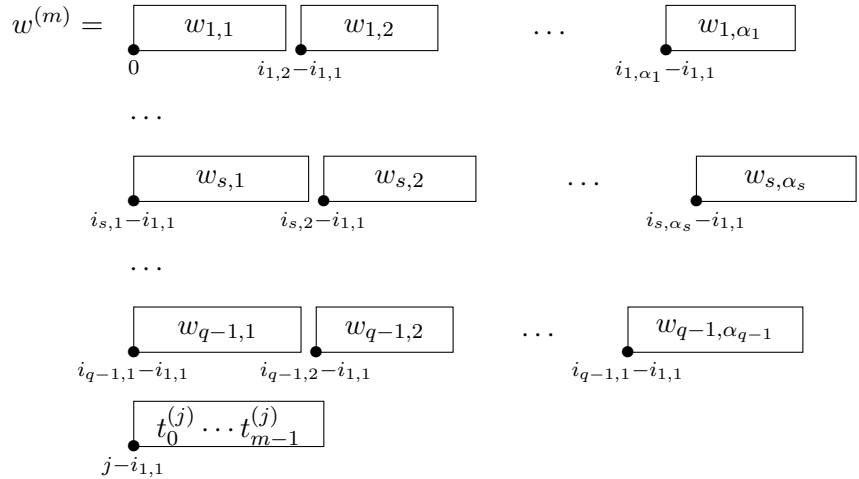

Now, let $m,n\in\N$ be distinct. Since $\qDBi{j}(1)$ is not ultimately periodic, $\sigma^m \big(\qDBi{j}(1)\big)\ne \sigma^n\big(\qDBi{j}(1)\big)$. Thus, there exists $\ell\in\N_{\ge 1}$ such that $t_m^{(j)}\cdots t_{m+\ell-2}^{(j)}=t_n^{(j)}\cdots t_{n+\ell-2}^{(j)}$ and $t_{m+\ell-1}^{(j)}\ne t_{n+\ell-1}^{(j)}$. Without loss of generality, we suppose that $t_{m+\ell-1}^{(j)}>t_{n+\ell-1}^{(j)}$. Let $z=t_m^{(j)}\cdots t_{m+\ell-1}^{(j)}$. Our aim is to show that $w^{(m)}z \in \Fac(\Sigma_{\B})$ and $w^{(n)}z\notin \Fac(\Sigma_{\B})$.

In order to obtain that $w^{(m)}z \in \Fac(\Sigma_{\B})$, we show that $w^{(m)}z \in \Pref(D_{\B^{(i_{1,1})}})$. First, for all $s\in[\![1,q-1]\! ]$ and $k\in[\![1,\alpha_s]\!]$, $t_0^{(i_s)}\cdots t_{g-1}^{(i_s)}0^{C_s}\in Y_{\B^{(i_{s,k})},(g+C_s) \bmod p}$. Second, for all $i\in\Int$, $0\in Y_{\B^{(i)},1}$. Third, by Corollary~\ref{cor:DX}, for all $h\in\Int$,  $t_0^{(j)}\cdots t_{m-1}^{(j)}z\in\Pref(Y_{\B^{(j)},h})$. The conclusion follows from Proposition~\ref{prop:DY}.

In view of \eqref{eq:UnionPref}, in order to prove that $w^{(n)}z\notin \Fac(\Sigma_{\B})$, it suffices to show that for all  $i\in\Int$, $w^{(n)}z\notin \Pref(D_{\B^{(i)}})$. Proceed by contradiction and let $i\in \Int$ and $w\in D_{\B^{(i)}}$ such that $w^{(n)}z$ is a prefix of $w$. By Theorem~\ref{thm:Parry}, for all $s\in [\![1,q]\!]$, the factor $t_0^{(i_s)}\cdots t_{g-1}^{(i_s)} 0^{C_s}$  occurs at a position $e$ in $w$ such that $(i+e)\bmod p$ belongs to $G_1\cup\cdots\cup G_s$.
For $s=1$, we obtain that for all $k\in[\![1,\alpha_1]\!]$, $(i+i_{1,k}-i_{1,1})\bmod p\in G_1$, and hence that
\[
	G_1=\{(i+i_{1,1}-i_{1,1})\bmod p,\ldots,(i+i_{1,\alpha_1}-i_{1,1})\bmod p\}.
\]
For $s=2$, we get that for all $k\in[\![1,\alpha_2]\!]$, 
$(i+i_{2,k}-i_{1,1})\bmod p\in G_1\cup G_2$. If $(i+i_{2,k}-i_{1,1})\bmod p\in G_1$ for some $k\in[\![1,\alpha_2]\!]$, then there exists $k'\in[\![1,\alpha_1]\!]$ such that $(i+i_{2,k}-i_{1,1})\bmod p=(i+i_{1,k'}-i_{1,1})\bmod p$, hence such that $i_{2,k}=i_{1,k'}$, which is impossible since $G_1$ and $G_2$ are pairwise disjoint. It follows that
\[
	G_2=\{(i+i_{2,1}-i_{1,1})\bmod p,\ldots,(i+i_{2,\alpha_2}-i_{1,1})\bmod p\}.
\]
By iterating the reasoning, we obtain that 
\[
	G_s=\{(i+i_{s,1}-i_{1,1})\bmod p,\ldots,(i+i_{s,\alpha_s}-i_{1,1})\bmod p\}
	\quad\text{for all } s\in[\![1,q-1]\!].
\]
We finally get that $(i+j-i_{1,1})\bmod p$ belongs to $G_q$. Then $\qDBi{(i+j-i_{1,1})\bmod p}(1)\le_{\lex}\qDBi{j}(1)$. Let $r$ be the position where the factor $t_0^{(j)}\cdots t_{n-1}^{(j)}$ occurs in $w^{(n)}$, and hence also in $w$ since $w^{(n)}z$ is a prefix of $w$. We have seen that $r\equiv j-i_{1,1}\pmod p$. Since $w\in D_{\B^{(i)}}$, it follows from Theorem~\ref{thm:Parry} that 
\[
	\sigma^r(w)
	<_{\lex}\qDBi{i+r}(1)
	=\qDBi{(i+j-i_{1,1})\bmod p}(1)
	\le_{\lex}\qDBi{j}(1).
\] 
We have thus reached a contradiction since the factor $t_0^{(j)}\cdots t_{n-1}^{(j)}z$ is lexicographically greater than the length-($n+\ell$) prefix of $\qDBi{j}(1)$.
\end{proof}

Note that, in the classical case $p=1$, the previous proof is much shorter since $\Fac(\Sigma_{\beta})=\Pref(D_{\beta})$, and hence we can directly deduce that the words $t_0^{(j)}\cdots t_{m-1}^{(j)}$ and $t_0^{(j)}\cdots t_{n-1}^{(j)}$ (where in fact, $j=0$) are not right-congruent with respect to $\Fac(\Sigma_{\beta})$.

A subshift $S$ of $A^{\N}$ is said to be of \emph{finite type} if its minimal set of forbidden factors is finite. For $p=1$, it is well known that the $\beta$-shift is of finite type if and only if $d_\beta(1)$ is finite \cite{Bertrand-Mathis:1986}. However, this result does not generalize to $p\ge 2$ as is illustrated by the following example.

\begin{example}
Consider the alternate base $\B=(\overline{\frac{1+\sqrt{13}}{2},\frac{5+\sqrt{13}}{6}})$ of Example~\ref{ex:1+sqrt{13}}. Then $\qDBi{0}(1)=200(10)^\omega$ and $\qDBi{1}(1)=(10)^\omega$. We see that all words in $2(00)^{*}2$ are factors avoided by $\Sigma_{\B}$, so the $\B$-shift $\Sigma_{\B}$ is not of finite type.
\end{example}





\section{Acknowledgment}
Célia Cisternino is supported by the FNRS Research Fellow grant 1.A.564.19F.

\bibliographystyle{abbrv}
\bibliography{CharlierCisternino2020}

\end{document}